\newtheorem{theorem}{Theorem}[section]
\newtheorem{lemma}[theorem]{Lemma}
\theoremstyle{definition}
\theoremstyle{remark}
\newtheorem{remark}[theorem]{Remark}
\numberwithin{equation}{section}
\newcommand{\p}{\partial}
\def\d{\ensuremath{\mathrm{d}}}
\newcommand{\pd}[2]{\dfrac{\partial #1}{\partial #2}}
\newcommand{\norm}[1]{\left\|#1\right\|}
\newcommand{\xx}{{\mathbf{x}}}
\newcommand{\yy}{{\mathbf{y}}}
\newcommand{\zz}{{\mathbf{z}}}
\newcommand{\nn}{{\mathbf{n}}}
\newcommand{\RR}{{\mathbb{R}}}
\newcommand{\into}{{\int_\Omega}}
\newcommand{\intpo}{{\int_{\partial\Omega}}}
\newcommand{\Rd}{{R_{\delta}(\xx,\yy)}}
\newcommand{\bRd}{{\bar{R}_{\delta}(\xx,\yy)}}
\newenvironment{equationa*}{\begin{equation*}\begin{aligned}} {\end{aligned}\end{equation*}}
\begin{document}

\title[]{Asymptotically Compatible Error Bound of Finite Element Method for Nonlocal Diffusion Model with An Efficient Implementation}

\author[]{Yanzun Meng}
\address{Yanzun Meng: Department of Mathematical Sciences, Tsinghua University,
Beijing, China, 100084.}
\curraddr{}
\email{myz21@mails.tsinghua.edu.cn}
\thanks{}

\author[]{Zuoqiang Shi}
\address{Zuoqiang Shi: Yau Mathematical Sciences Center, Tsinghua University, 
Beijing, China, 100084. \&
Yanqi Lake Beijing Institute of Mathematical Sciences and Applications,
 Beijing, China, 101408.}
\email{zqshi@tsinghua.edu.cn}
\thanks{This work is supported by National Natural Science Foundation of China (NSFC) 92370125.}

\subjclass[2020]{Primary 65R20, 65N30, 45A05, 65B99}



\keywords{nonlocal diffusion model, finite element method, asymptotically compatible error}

\begin{abstract}
 This paper presents an asymptotically compatible error bound for the finite element method (FEM) applied to a nonlocal diffusion model. 
 The analysis covers two scenarios: meshes with and without shape regularity. For shape-regular meshes, the error is bounded by \(O(h^k + \delta)\), where \(h\) is the mesh size, \(\delta\) is the nonlocal horizon, and \(k\) is the order of the FEM basis. 
 Without shape regularity, the bound becomes \(O(h^{k+1}/\delta + \delta)\).  
 In addition, we present an efficient implementation of the finite element method of nonlocal model. 
 The direct implementation of the finite element method of nonlocal model requires computation of $2n$-dimensional integrals which are very expensive. 
 For the nonlocal model with Gaussian kernel function, we can decouple the $2n$-dimensional integral to 2-dimensional integrals which reduce the computational cost tremendously. 
 Numerical experiments verify the theoretical results and demonstrate the outstanding performance of the proposed numerical approach. 
\end{abstract}

\maketitle



\section{Introduction}
\label{Sec:Introduction}
Nonlocal modeling has emerged as a powerful framework in recent decades, offering advantages over traditional differential operator-based approaches, particularly for problems involving singularities or anomalous behavior. By replacing differential operators with integral operators, nonlocal models can capture complex phenomena that classical partial differential equations (PDEs) struggle to describe. 
Nonlocal models have found applications in diverse fields, including
anomalous diffusion~\cite{andreu2010nonlocal,bucur2016nonlocal,vazquez2012nonlinear,burch2011classical}, fracture mechanics in peridynamics~\cite{askari2008peridynamics,oterkus2012peridynamic,silling2010crack,du2011mathematical,silling2010peridynamic}, traffic flow~\cite{QiangDu2023DiscreteandContinuousDynamicalSystems}, imaging process~\cite{osher2017low} and semi-supervised learning~\cite{shi2017weighted,wang2018non,tao2018nonlocal}. 
Given their broad applicability, the development of efficient and accurate numerical methods for nonlocal models has attracted significant attention.

To solve the nonlocal models, many numerical methods have been proposed in the literature, include difference method \cite{doi:10.1137/13091631X}, finite element method \cite{chen2011continuous,du2013posteriori,du2013convergent}, spectral method \cite{du2017fast, doi:10.1137/15M1039857, DU2017118}, 
collocation method \cite{zhang2016nodal, zhang2018accurate} and mesh free method \cite{bond2015galerkin,silling2005meshfree,lehoucq2018meshless,lehoucq2016radial}. Among the various numerical approaches, the finite element method (FEM) stands out due to its flexibility and robustness. In this paper, we focus the finite element discretization of a nonlocal diffusion model
\begin{align}
  &\frac{1}{\delta^2}\into \Rd (u(\xx)-u(\yy))\d \yy +\into \bRd u(\yy)\d \yy\notag\\
  &\hspace{4cm}=\into \bRd f(\yy)\d \yy+2\intpo \bRd g(\yy)\d S_\yy.\label{eq:nonlocal model intro}
\end{align}
where $\Rd$ and $\bRd$ are integral kernels, which are typically chosen as radially symmetric and limited to a spherical neighborhood of radius $2\delta$. $f$ and $g$ are given functions. The details of the above nonlocal model are given in Section \ref{sec:Nonlocal diffusion model}. It has been proved that under some mild assumptions, the solution of above nonlocal model converges to the solution of the following elliptic equation with Neumann boundary condition  
\begin{equation}
  \label{eq:local model intro}
  \left\{
    \begin{aligned}
      -&\Delta u(\xx)+u(\xx)=f(\xx),\quad &\xx \in\Omega,\\
      &\pd{u}{\nn}(\xx)=g(\xx),&\xx\in \partial\Omega,
    \end{aligned}
  \right.
\end{equation}
as $\delta$ goes to zero \cite{shi2017convergence}.  

In the theoretical part of this paper, we analyze the error between the finite element solution of the nonlocal model \eqref{eq:nonlocal model intro} and the exact solution of the local model \eqref{eq:local model intro}, denoted as $u_h-u$.
If the shape regularity is preserved as mesh size $h\rightarrow 0$, we prove that the error is $O(h^k+\delta)$ in $L^2$ norm with $k$-th order finite element basis. 
For $H^1$ norm, due to the absence of $H^1$ coercivity for nonlocal diffusion model, we can not get the bound of $\|u_h-u\|_{H^1(\Omega)}$ directly. However, we introduce a gradient recovery method such that the error gradient also has the bound of $O(h^k+\delta)$ after recovery. This theoretical result shows that the finite element solution of the nonlocal model converges to the solution of the local model as $h, \delta$ go to zero without any requirement on the relation between $h$ and $\delta$. This property is very important to guarantee that the finite element method is asymptotically compatible (AC) as introduced by Du and Tian \cite{tian2014asymptotically}. In \cite{tian2014asymptotically}, a theoretical framework of AC scheme was established to show that under some general assumptions, the Galerkin finite element approximation is always asymptotically compatible as long as the continuous piecewise linear functions are included in the finite element space. For a specific nonlocal diffusion model \eqref{eq:nonlocal model intro}, we get the optimal $H^1$ convergence rate in $h$ after introducing a gradient recovery strategy. The convergence rate in $\delta$ is first order which is also optimal in the sense that the convergence rate of the nonlocal model itself is also first order.  

If the shape regularity is not preserving when mesh size $h$ goes to zero, the error bound becomes $O(h^{k+1}/\delta+\delta)$. In this case, the finite element method is asymptotically compatible with condition $h^{k+1}/\delta\rightarrow 0$. This is a reasonable result, since the finite element method is not convergent for the local problem without shape regularity. 

Although the finite element method for nonlocal model has good theoretical properties, the implementation of the nonlocal finite element method is very challenging. The most difficult part lies in the assembling of the stiffness matrix. In this process, we need to compute following integral many times. 
\begin{align*}
  \left<\mathcal{L}_\delta \psi_i, \psi_j\right> = \int_{\Omega}\int_{B(\xx,2\delta)\cap \Omega}\gamma_\delta(\xx,\yy)(\psi_i(\xx)-\psi_i(\yy))\psi_j(\xx)\d\yy\d\xx,
\end{align*}
Where $\psi_i,\psi_j$ are the node basis functions.
If $\Omega$ is a domain in $\RR^n$, the above integral is in fact a $2n$-dimensional integral. Assembling the stiff matrix requires calculating this kind of integral for numerous times, which brings expensive computation cost. 
Meanwhile, the kernel $\gamma_\delta$ is usually nearly-singular, and dealing with the intersection of the Euclidean ball $B(\xx,\delta)$ and the mesh is also challenging.  Despite considerable efforts have been made to mitigate these issues, 
such as \cite{zhang2016quadrature, PASETTO2022115104} designed efficient quadrature method and \cite{doi:10.1142/S0218202521500317} polygonally approximated the Euclidean ball,  
the implementation of nonlocal finite element is still a challenging task.

For Gaussian kernel and tensor-product domain, we propose a fast implementation of the nonlocal finite element method. In this case, the $2n$-dimensional integral can be separated to the product of 2d integrals, which reduces the computational cost tremendously. For the domain which can be decomposed to the union of tensor-product domains, the method is still applicable.  

The rest of this paper is organized as follows. In Section \ref{sec:Nonlocal diffusion model and conformal finite element discretization}, we give the formulation of nonlocal diffusion model and introduce the finite element discretization.
The details of the error analysis are presented in Section \ref{sec:Error analysis of finite element method}. 
Subsequently, the fast implementation is introduced in Section \ref{sec: Fast Implementation} and numerical experiments are demonstrated in Section \ref{sec: Numerical Experiments}.
 
\section{Nonlocal finite element discretization and main results}
\label{sec:Nonlocal diffusion model and conformal finite element discretization}
This section will introduce the configuration of our nonlocal diffusion model with its local counterpart. 
To solve this nonlocal problem, a conformal finite element discretization is designed. 
The error estimations between the finite element solution and the PDE solution will be stated in this section. Additionally, we also design a method to approximate the gradient of the local solution.

\subsection{Nonlocal diffusion model}
\label{sec:Nonlocal diffusion model}
In this paper, we consider the following partial differential equation with Neumann boundary.
\begin{equation}
  \label{eq:local model}
  \left\{
    \begin{aligned}
      -&\Delta u(\xx)+u(\xx)=f(\xx),\quad &\xx \in\Omega,\\
      &\pd{u}{\nn}(\xx)=g(\xx),&\xx\in \partial\Omega,
    \end{aligned}
  \right.
\end{equation}
where $\Omega\subset\RR^n$ is a bounded and connected domain.
The nonlocal counterpart of this equation is given as follows
\begin{align}
  &\frac{1}{\delta^2}\into \Rd (u(\xx)-u(\yy))\d \yy +\into \bRd u(\yy)\d \yy\notag\\
  &\hspace{4cm}=\into \bRd f(\yy)\d \yy+2\intpo \bRd g(\yy)\d S_\yy.\label{eq:nonlocal model}
\end{align}
The kernel functions $R_\delta$ and $\bar{R}_\delta$ in (\ref{eq:nonlocal model}) 
are derived from a function $R$ which satisfies the following conditions:
\begin{itemize}
  \item[(a)] (regularity) $R\in C^1([0,+\infty))$;
  \item[(b)] (positivity and compact support)
  $R(r)\ge 0$ and $R(r) = 0$ for $\forall r >1$;
  \item[(c)] (nondegeneracy)
   $\exists \gamma_0>0$ so that $R(r)\ge\gamma_0$ for $0\le r\le\frac{1}{2}$. 
\end{itemize}
With this function, we can further define 
\begin{align*}
  \bar{R}(r)=\int_r^{+\infty} R(s)\d s.
\end{align*}
We can find $\bar{R}$ also satisfies the above three conditions. 
With these two univariate functions, we can get the corresponding kernel function with scaling transformation as follows 
\begin{align}
  \label{eq:kernel def}
  R_\delta(\xx,\yy)=\alpha_n \delta^{-n}R\left(\frac{|\xx-\yy|^2}{4\delta^2}\right),\quad \bar{R}_\delta(\xx,\yy)=\alpha_n \delta^{-n}\bar{R}\left(\frac{|\xx-\yy|^2}{4\delta^2}\right).
\end{align}
Here $\alpha_n$ is a normalization constant such that 
\begin{align*}
    \int_{\RR^n}\alpha_n \delta^{-n}\bar{R}\left(\frac{|\xx-\yy|^2}{4\delta^2}\right)\d \yy =\alpha_n S_n\int_0^2 \bar{R}(r^2/4)r^{n-1}\d r=1,
\end{align*}

With the configuration as above, we can illustrate our finite element scheme.

\subsection{Finite element discretization.}
We next consider solving the nonlocal model (\ref{eq:nonlocal model}) with finite element method. Let $\Omega_h$ be a polyhedral approximation of $\Omega$, and $\mathcal{T}_h$ be the mesh associated with $\Omega_h$,
where $h = \max_{T\in\mathcal{T}_h}\mbox{diam}(T)$ is the maximum diameter. 
Additionally, the radius of the inscribed ball of $T$ is denoted as $\rho(T)$ and $\rho = \min_{T\in\mathcal{T}_h}\rho(T)$.
We focus on the continuous $k$-th order finite element space defined on $\Omega_h$, i.e.
\begin{equation}
	S_h = \left\{v_h \in C^0(\Omega_h): v_h|_T \in \mathbb{P}_k(T), \quad  \forall T \in \Omega_h \right\}.
\end{equation}
If $\mathcal{T}_h$ is a simplicial mesh, such as triangular mesh in 2D and tetrahedral mesh in 3D, $\mathbb{P}_k$ denotes the set of all $k$-th order polynomials in $T$. 
Meanwhile, for Cartesian mesh, e.g. rectangular mesh in 2D and cuboidal mesh in 3D, $\mathbb{P}_k$ will be chosen as $k$-th tensor-product polynomial space.

The finite element discretization of the nonlocal diffusion model is to find $u_h\in S_h$ such that 
\begin{align}
  \label{eq:fem}
  \left<L_\delta u_h, v_h\right>_{\Omega_h}=\left<\bar{f}_\delta,v_h\right>_{\Omega_h},\quad \forall v_h\in S_h,
\end{align}
with 
$\bar{f}_\delta(\xx)=\int_\Omega \bar{R}_\delta(\xx,\yy)f(\yy)\d \yy+2\int_{\p \Omega} \bar{R}_\delta(\xx,\yy)g(\yy)\d S_{\yy}$ and 
\begin{equation}
\label{eq:L_delta}
L_\delta v(\xx) = \frac{1}{\delta^2}\int_\Omega R_\delta(\xx,\yy)(v(\xx)-v(\yy))\d \yy+\int_\Omega \bar{R}_\delta(\xx,\yy)v(\yy)\d \yy,\quad \forall v\in L^2(\Omega).
\end{equation}
The binary operator $<\cdot,\cdot>_{\Omega_h}$ in (\ref{eq:fem}) denotes the inner product in $\Omega_h$, i.e. 
\begin{align*}
  \left<u, v\right>_{\Omega_h}=\int_{\Omega_h} u(\xx)v(\xx)\d \xx.
\end{align*}
For the sake of simplification, we focus on the case $\Omega=\Omega_h$ which means that we do not consider the error from domain approximation. 
In the rest of the paper, $\Omega$ and $\Omega_h$ will not be distinguished.

\subsection{Main results.}
We will give the main results of this paper in advance here. The proof of these results can be found in the following sections.
Our results include two key points. Firstly, the $L^2$ error between the nonlocal finite element solution $u_h$ and the solution of the local counterpart $u$ can get an estimation. 
Secondly, based on the solution $u_h$, we can also approximate $\nabla u$.
\begin{theorem}
  \label{th:main}
  Let $u\in H^{\max\{k+1,3\}}(\Omega)$ solve the local model (\ref{eq:local model}) and $u_h$ be the solution of (\ref{eq:fem}). 
  We can obtain 
  \begin{align}
    \norm{u-u_h}_{L^2(\Omega)}\leq C\left(\frac{h^{k+1}}{\max\{\rho,\delta\}}+\delta\right)\norm{u}_{H^{\max\{k+1, 3\}}(\Omega)},\label{eq:L2error result}
  \end{align}
  where $\rho$ is the minimal radius of the inscribed ball of the elements and $C$ is a constant independent of $\delta$ and $h$.
\end{theorem}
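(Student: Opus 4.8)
The plan is to treat \eqref{eq:fem} as a Galerkin method for the symmetric bilinear form
\begin{equation*}
a_\delta(v,w)=\frac{1}{2\delta^2}\into\into \Rd\,\big(v(\xx)-v(\yy)\big)\big(w(\xx)-w(\yy)\big)\,\d\yy\,\d\xx+\into\into \bRd\,v(\yy)w(\xx)\,\d\yy\,\d\xx,
\end{equation*}
and to measure everything in the natural energy norm $\norm{v}_\delta^2=\Ltnorm{v}^2+|v|_\delta^2$, where $|v|_\delta^2=\frac{1}{\delta^2}\into\into \Rd\,(v(\xx)-v(\yy))^2\,\d\yy\,\d\xx$. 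First I would record the two structural facts needed: continuity $|a_\delta(v,w)|\le C\norm{v}_\delta\norm{w}_\delta$, and $\delta$-uniform coercivity $a_\delta(v,v)\ge c\,\norm{v}_\delta^2$. The latter combines the trivial bound $a_\delta(v,v)\ge\tfrac12|v|_\delta^2$ (the second term is a positive convolution, hence nonnegative) with a $\delta$-uniform $L^2$-coercivity $a_\delta(v,v)\ge c\,\Ltnorm{v}^2$ coming from a nonlocal Poincaré-type inequality (heuristically, the symbol $\tfrac{1}{\delta^2}(\widehat{R_\delta}(0)-\widehat{R_\delta}(\xi))+\widehat{\bar R_\delta}(\xi)$ stays bounded below and tends to $|\xi|^2+1$); averaging the two lower bounds yields coercivity in the full norm.

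With coercivity in hand I would write $u-u_h=(u-I_h u)-e_h$ with $e_h:=u_h-I_h u\in S_h$ the interpolant error, test \eqref{eq:fem} against $e_h$, and insert $u$ to obtain
\begin{equation*}
c\,\norm{e_h}_\delta^2\le a_\delta(e_h,e_h)=\underbrace{\big\langle \bar f_\delta-L_\delta u,\,e_h\big\rangle_\Omega}_{\text{consistency}}+\underbrace{a_\delta(u-I_h u,\,e_h)}_{\text{interpolation}} .
\end{equation*}
By continuity the interpolation term is at most $C\norm{u-I_h u}_\delta\norm{e_h}_\delta$, so after dividing by $\norm{e_h}_\delta$ the whole theorem reduces to (i) an estimate for $\norm{u-I_h u}_\delta$ and (ii) a consistency estimate for the residual of the local solution. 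The final assembly uses $\norm{u-u_h}_{L^2}\le\Ltnorm{u-I_h u}+\norm{e_h}_\delta$ together with the shape-regularity-free bound $\Ltnorm{u-I_h u}\le Ch^{k+1}\norm{u}_{H^{k+1}(\Omega)}$; note that because the $L^2$ error is controlled directly by the energy norm, no Aubin--Nitsche duality is used (which is why for shape-regular meshes one lands at $O(h^k+\delta)$ rather than $O(h^{k+1})$).

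The key and genuinely new step is the seminorm interpolation estimate, which I would prove by a two-regime argument for $e:=u-I_h u$, which is globally continuous and piecewise polynomial. Two bounds always hold: (a) pointwise, $(e(\xx)-e(\yy))^2\le 2e(\xx)^2+2e(\yy)^2$ with $\into \Rd\,\d\yy\le C$, giving $|e|_\delta^2\le \tfrac{C}{\delta^2}\Ltnorm{e}^2$; and (b) along segments, the nonlocal-to-local comparison $|e|_\delta^2\le C\norm{\gd e}_{L^2(\Omega)}^2$ with constant independent of $\delta$. Feeding in $\Ltnorm{e}\le Ch^{k+1}\norm{u}_{H^{k+1}(\Omega)}$ (no shape regularity) and $\norm{\gd e}_{L^2(\Omega)}\le C\,\tfrac{h^{k+1}}{\rho}\norm{u}_{H^{k+1}(\Omega)}$ (the only place $\rho$ enters), bound (a) gives $|e|_\delta\le C\tfrac{h^{k+1}}{\delta}\norm{u}_{H^{k+1}(\Omega)}$ and bound (b) gives $|e|_\delta\le C\tfrac{h^{k+1}}{\rho}\norm{u}_{H^{k+1}(\Omega)}$; taking the smaller produces exactly $\norm{u-I_h u}_\delta\le C\,\tfrac{h^{k+1}}{\max\{\rho,\delta\}}\norm{u}_{H^{k+1}(\Omega)}$. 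For shape-regular meshes $\rho\sim h$ this is optimal $O(h^k)$; without it, the $\delta$-averaging in regime (a) rescues the estimate to $O(h^{k+1}/\delta)$.

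For (ii) I would invoke the nonlocal consistency from the convergence analysis \cite{shi2017convergence}: for $u\in H^3(\Omega)$ solving \eqref{eq:local model}, the residual obeys $|\langle \bar f_\delta-L_\delta u,\,v\rangle_\Omega|\le C\delta\norm{u}_{H^3(\Omega)}\norm{v}_\delta$, contributing the $O(\delta)$ term and fixing the regularity index $\max\{k+1,3\}$. Combining (i), (ii), coercivity, and the triangle inequality then yields \eqref{eq:L2error result}. The main obstacle is precisely this sharp $O(\delta)$ consistency estimate: a Taylor expansion only controls the interior (where odd moments vanish and the $\tfrac{1}{\delta^2}$ prefactor leaves an $O(\delta)\norm{u}_{H^3}$ remainder), while the Neumann data creates a boundary layer of width $\delta$ that the correction term $2\intpo \bRd\,g\,\d S_\yy$ in $\bar f_\delta$ is designed to cancel; showing that this cancellation holds in the $\norm{\cdot}_\delta$-dual norm, rather than losing a factor $\delta^{1/2}$ to the thin layer, is the delicate part and is exactly what forces the $H^3$ requirement.
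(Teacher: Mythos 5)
Your architecture is essentially the paper's, modulo packaging. The paper also works with the energy $(E_\delta(v))^2=\left<L_\delta v,v\right>_\Omega$ (which is exactly your $a_\delta(v,v)$), proves $\delta$-uniform control of both $\Ltnorm{v}$ and the $R_\delta$-seminorm by $E_\delta(v)$, splits the error against the interpolant $I_hu$, and produces the two competing rates $h^{k+1}/\rho$ and $h^{k+1}/\delta$ precisely by playing an $H^1$-type bound against an $L^2$-type bound, just as you do. The only structural difference is where the two regimes enter: the paper estimates the interpolation term through two dual pairings, $|\left<L_\delta v,w\right>_\Omega|\le CE_\delta(v)\norm{w}_{H^1(\Omega)}$ and $\norm{L_\delta v}_{L^2(\Omega)}\le C\delta^{-1}E_\delta(v)$ applied with $w=u-I_hu$, whereas you prove a single interpolation estimate in the energy norm, $\norm{u-I_hu}_\delta\le C\,h^{k+1}/\max\{\rho,\delta\}\,\norm{u}_{H^{k+1}(\Omega)}$, from the crude bound $|e|_\delta\le C\delta^{-1}\Ltnorm{e}$ and the nonlocal-to-local comparison (the paper's Lemma \ref{lemma:Taylor1}, which in general yields $\norm{e}_{H^1(\Omega)}$ rather than $\norm{\nabla e}_{L^2(\Omega)}$ alone, an immaterial difference here). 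These are the same two mechanisms and yield the same conclusion; your version isolates the mesh dependence in one lemma, which is arguably cleaner.

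Two steps need repair before this is a proof. First, your coercivity justification fails for the kernel class of the theorem. The zeroth-order term $\into\into\bRd v(\xx)v(\yy)\d\xx\d\yy$ is \emph{not} nonnegative merely because $\bar R\ge 0$: nonnegativity of this quadratic form requires $\bar R$ to be a positive semi-definite kernel (nonnegative Fourier transform), which holds for the Gaussian but is not implied by the paper's assumptions (a)--(c); likewise the symbol computation is only heuristic on a bounded domain. The paper instead bounds the possibly negative part by the seminorm, $\into\into\bRd v(\xx)v(\yy)\d\xx\d\yy\ge-\frac14\into\into\bRd(v(\xx)-v(\yy))^2\d\xx\d\yy\ge-C\into\into\Rd(v(\xx)-v(\yy))^2\d\xx\d\yy$, where the last kernel-comparison inequality is a nontrivial quoted result (\cite{wang2023nonlocal}); this deficit is then absorbed into the $\frac{1}{2\delta^2}$-weighted seminorm for small $\delta$, and only from that does the $L^2$ lower bound (\ref{eq: ineq3}) follow. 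Without this lemma (or an explicit restriction to positive-definite kernels) your two ``structural facts'' are unproved. Second, the $O(\delta)$ consistency you attribute wholesale to \cite{shi2017convergence} is only half-citable: the decomposition $r=r_{in}+r_{bd}$ with $\norm{r_{in}}_{L^2(\Omega)}\le C\delta\norm{u}_{H^3(\Omega)}$ and $\norm{r_{bd}}_{L^2(\Omega)}\le C\delta^{1/2}\norm{u}_{H^3(\Omega)}$ is taken from there, but the upgrade $|\left<r_{bd},v\right>_\Omega|\le C\delta\norm{u}_{H^3(\Omega)}E_\delta(v)$ --- exactly the step that avoids the $\delta^{1/2}$ boundary-layer loss you flag as delicate --- is proved in this paper itself (Lemma \ref{lemma:boundary error}, via the smoothing operator $\bar S_\delta$ and an $H^1$ bound for it in terms of $E_\delta$). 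Your write-up would have to reproduce that argument or state it as a separate lemma; it is not a black-box consequence of the nonlocal convergence theory.
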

\begin{remark}
  \label{remark1}
  Noticing the result (\ref{eq:L2error result}) indicates the following result
  \begin{align}
    \norm{u-u_h}_{L^2(\Omega)}\leq C\left(h^k + \delta\right)\norm{u}_{H^{\max\{k+1, 3\}}(\Omega)},\label{eq:shape regular result}
  \end{align}
  under the shape regular condition, i.e. $\frac{h}{\rho}$ is bounded. More importantly, this is an asymptotically compatible result. 
  In other words, as long as our mesh is shape regular, the finite element solution converges to the local solution as $\delta\rightarrow 0$ and $h\rightarrow 0$ independently. For irregular mesh, this result also indicates the following error bound depending only on $\delta$ and $h$
  \begin{align}
    \norm{u-u_h}_{L^2(\Omega)}\leq C\left(\frac{h^{k+1}}{\delta} + \delta\right)\norm{u}_{H^{\max\{k+1, 3\}}(\Omega)}.
  \end{align}
\end{remark}

Moreover, in this paper, we also design a method to approximate the gradient of the local solution. For $v\in L^2(\Omega)$, we define
\begin{align}
  S_\delta v(\xx)=\frac{1}{w_\delta(\xx)}\into \Rd v(\yy)\d \yy,\quad w_\delta(\xx)=\int_\Omega \Rd\d \yy.\label{eq:Sdelta}
\end{align} 
Then we can obtain the following theorem.
\begin{theorem}
  \label{th:gradient}
  Let $u\in H^{\max\{k+1,3\}}(\Omega)$ solve the local model (\ref{eq:local model}) and $u_h$ be the solution of (\ref{eq:fem}). With the correction term 
  \begin{align}
    \mathbf{F}_\delta(\xx) = \frac{1}{w^2_\delta(\xx)}\int_{\partial\Omega}\int_\Omega\Rd R_\delta(\xx,\zz)g(\zz)((\yy-\zz)\cdot \nn(\zz))\nn(\zz)\d S_\zz\d \yy,\label{eq:correction}
  \end{align}
  we can get 
  \begin{align}
    \norm{\nabla u - \left(\nabla S_\delta u_h - \mathbf{F}_\delta\right)}_{L^2(\Omega)}^2\leq C\left(\frac{h^{k+1}}{\max\{\rho,\delta\}} + \delta\right)\norm{u}_{H^{\max\{k+1,3\}}(\Omega)},\label{correction estimation}
  \end{align}
  where $g(\zz)$ is the Neumann boundary term in (\ref{eq:local model}) and $C$ is a constant independent of $\delta$ and $h$. 
\end{theorem}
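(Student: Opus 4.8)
The plan is to prove the estimate in two stages: first control the recovery error for the exact local solution $u$, then transfer the bound to the finite element solution $u_h$ through the nonlocal Dirichlet energy of the error $e=u_h-u$. The starting point is an integration-by-parts identity for $S_\delta$. Since $R_\delta(\xx,\yy)$ depends only on $|\xx-\yy|$ we have $\gd_\xx R_\delta(\xx,\yy)=-\gd_\yy R_\delta(\xx,\yy)$, so that for every $v\in H^1(\Omega)$ --- which covers both the smooth $u$ and the conforming, piecewise-polynomial $u_h$ ---
\begin{equation*}
  \gd S_\delta v(\xx)=S_\delta(\gd v)(\xx)-\frac{1}{\wdd(\xx)}\intpo R_\delta(\xx,\zz)\bigl(v(\zz)-S_\delta v(\xx)\bigr)\nn(\zz)\,\d S_\zz .
\end{equation*}
The interior average $S_\delta(\gd v)$ reproduces the gradient up to $O(\delta)$, while the boundary integral is supported only in the layer $\{\mathrm{dist}(\xx,\p\Omega)\le 2\delta\}$; this layer is where all the delicate work lies.

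First I would establish the consistency estimate for the exact solution, namely $\norm{\gd u-(\gd S_\delta u-\mathbf{F}_\delta)}_{L^2(\Omega)}\le C\sqrt{\delta}\,\norm{u}_{H^3(\Omega)}$. The interior term is routine: radial symmetry of $R_\delta$ and a second-order Taylor expansion give $\norm{S_\delta(\gd u)-\gd u}_{L^2(\Omega)}\le C\delta\norm{u}_{H^3(\Omega)}$. For the boundary term I would rewrite $v(\zz)-S_\delta v(\xx)=\tfrac{1}{\wdd(\xx)}\into R_\delta(\xx,\yy)(u(\zz)-u(\yy))\,\d\yy$ and Taylor expand $u(\zz)-u(\yy)=-\gd u(\zz)\cdot(\yy-\zz)+O(|\yy-\zz|^2)$ about $\zz\in\p\Omega$. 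Decomposing $\gd u(\zz)$ into normal and tangential components and using the Neumann condition $\gd u(\zz)\cdot\nn(\zz)=g(\zz)$, the normal part reproduces precisely the correction $\mathbf{F}_\delta(\xx)$ of \eqref{eq:correction}, so that $\gd S_\delta u-\mathbf{F}_\delta$ equals $S_\delta(\gd u)$ plus a tangential-and-quadratic remainder. These remainders are only $O(1)$ pointwise, but they are supported in the $O(\delta)$-wide boundary layer, so their $L^2(\Omega)$ norm is $O(\sqrt{\delta})$. This square-root rate, once squared, is exactly the $O(\delta)$ term on the right of \eqref{correction estimation}, and explains why the theorem is stated for the squared norm.

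Second I would pass from $u$ to $u_h$ by estimating $\gd S_\delta e$ with $e=u_h-u$; note that $\mathbf{F}_\delta$ depends only on $g$, so it is unchanged. The key is to avoid a naive loss of a full power of $\delta^{-1}$: writing $\gd S_\delta e(\xx)=\tfrac{1}{\wdd(\xx)}\into \gd_\xx R_\delta(\xx,\yy)\bigl(e(\yy)-S_\delta e(\xx)\bigr)\,\d\yy$, using the bound $|\gd_\xx R_\delta|\le C\delta^{-1}\widehat R_\delta$ with $\widehat R_\delta$ a kernel comparable to $R_\delta$ (assumptions (a)--(c)), and applying Cauchy--Schwarz followed by $(e(\yy)-S_\delta e(\xx))^2\le 2(e(\yy)-e(\xx))^2+2(e(\xx)-S_\delta e(\xx))^2$, I expect
\begin{equation*}
  \norm{\gd S_\delta e}_{L^2(\Omega)}^2\le \frac{C}{\delta^2}\into\into R_\delta(\xx,\yy)\bigl(e(\xx)-e(\yy)\bigr)^2\,\d\yy\,\d\xx .
\end{equation*}
The right-hand side is comparable to the nonlocal energy $\langle L_\delta e,e\rangle$ of the error, which is exactly the quantity controlled en route to Theorem \ref{th:main}: coercivity of the nonlocal form together with Galerkin orthogonality and the nonlocal-to-local consistency estimate give $\langle L_\delta e,e\rangle\le C\bigl(h^{k+1}/\max\{\rho,\delta\}+\delta\bigr)\norm{u}_{H^{\max\{k+1,3\}}(\Omega)}$. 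Combining this with the consistency estimate through the triangle inequality and squaring then reproduces a bound of the form \eqref{correction estimation}.

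The main obstacle I anticipate is the boundary-layer analysis of the second step together with the energy reduction of the third. Identifying $\mathbf{F}_\delta$ as exactly the normal (Neumann) part of the boundary bias, and showing that the leftover tangential and quadratic terms are genuinely confined to the $O(\delta)$ layer with no hidden $O(1)$ contribution over a larger set, requires a careful change of variables aligned with $\p\Omega$ and the uniform two-sided bounds $0<c\le\wdd(\xx)\le C$ near the boundary (from the nondegeneracy assumption (c)). The reduction $\norm{\gd S_\delta e}_{L^2(\Omega)}^2\lesssim\langle L_\delta e,e\rangle$ is the step that makes the scheme work at the optimal rate, and verifying the kernel comparison $|\gd_\xx R_\delta|\le C\delta^{-1}\widehat R_\delta$ with $\widehat R_\delta$ dominated by the energy kernel is the technically load-bearing point.
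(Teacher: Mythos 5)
Your overall architecture matches the paper's: split by the triangle inequality into a consistency error for the exact solution, $\norm{\nabla u-(\nabla S_\delta u-\mathbf{F}_\delta)}_{L^2(\Omega)}$, plus the finite element contribution $\norm{\nabla S_\delta e_h}_{L^2(\Omega)}$, and control the latter by the nonlocal energy $E_\delta(e_h)$, which the proof of Theorem \ref{th:main} bounds by $C(h^{k+1}/\max\{\rho,\delta\}+\delta)\norm{u}$. That part of your proposal (including the reduction $\norm{\nabla S_\delta e_h}^2_{L^2}\lesssim \frac{1}{\delta^2}\into\into R_\delta(\xx,\yy)(e_h(\xx)-e_h(\yy))^2\,\d\xx\d\yy$, which is inequality (\ref{eq:nabla S}) cited from the literature, combined with (\ref{eq: control})) is sound and is exactly what the paper does.

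The genuine gap is in your consistency estimate. You claim $\norm{\nabla u-(\nabla S_\delta u-\mathbf{F}_\delta)}_{L^2(\Omega)}\le C\sqrt{\delta}\,\norm{u}_{H^3(\Omega)}$, arguing that after $\mathbf{F}_\delta$ removes the normal (Neumann) part, the leftover tangential and quadratic remainders are $O(1)$ pointwise on the $O(\delta)$-wide boundary layer, hence $O(\sqrt{\delta})$ in $L^2$. But that crude layer argument is precisely the one that yields the \emph{uncorrected} rate: the normal part you subtract is also only $O(1)$ pointwise on the layer, so under your reasoning the same $O(\sqrt{\delta})$ bound holds with or without $\mathbf{F}_\delta$, making the correction term pointless. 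This contradicts the entire purpose of $\mathbf{F}_\delta$ as stated in Remark \ref{remark:gradient approximation} and confirmed numerically in Table \ref{tab:u-Suh-delta}: without correction the observed rate is $1/2$; with correction it is $1$. The paper's Lemma \ref{lemma:correction} proves the full first-order bound $\norm{\nabla u-(\nabla S_\delta u-\mathbf{F}_\delta)}_{L^2(\Omega)}\le C\delta\norm{u}_{H^3(\Omega)}$, and this requires two ingredients your proposal lacks. First, near the flat faces of $\partial\Omega$ (the set $D_1$), the tangential remainder $T_1u(\yy,\zz)=\nabla u(\zz)\cdot\bigl((\yy-\zz)-((\yy-\zz)\cdot\nn(\zz))\nn(\zz)\bigr)$ is not estimated by its size; one uses the symmetry fact that $\into R_\delta(\xx,\yy)(\yy-\xx)\,\d\yy$ is parallel to $\nn$ (so it kills the tangential gradient), and then tangential integration by parts on $\partial\Omega$ converts $\intpo R_\delta(\xx,\zz)(\xx-\zz)\cdot\nabla_\Gamma u(\zz)\,\d S_\zz$ into $\delta^2\intpo \bar{R}_\delta(\xx,\zz)\Delta_\Gamma u(\zz)\,\d S_\zz$, gaining two powers of $\delta$ and giving an $O(\delta^{3/2})$ contribution (this is where $u\in H^3$ is genuinely needed). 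Second, near the corners (the set $D_2$, of measure $O(\delta^2)$), a separate argument via the Sobolev embedding $H^2\hookrightarrow C^0$ gives an $O(\delta)$ contribution. Without these two steps the theorem's actual content — that the correction upgrades $\sqrt{\delta}$ to $\delta$ — is not proved; the square on the left-hand side of (\ref{correction estimation}), on which your argument leans, is an inconsistency of the statement (the paper's own concluding computation in Section \ref{sec:Error analysis of finite element method} bounds the unsquared norm by $C(h^{k+1}/\max\{\rho,\delta\}+\delta)\norm{u}$).
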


Similar to Remark \ref{remark1}, with shape regular condition, above theorem can also get an asymptotically compatible version. For a more important point, we give the following remark.
\begin{remark}
  \label{remark:gradient approximation}
  The complicated correction term (\ref{eq:correction}) is introduced for dealing with the loss of half an order of convergence in terms of $\delta$. In other words, without the correction term, the result will become
  \begin{align}
    \norm{\nabla u - \nabla S_\delta u_h}_{L^2(\Omega)}^2\leq C\left(\frac{h^{k+1}}{\max\{\rho,\delta\}} + \sqrt{\delta}\right)\norm{u}_{H^{\max\{k+1,3\}}(\Omega)}.\label{eq:gradient approximation1}
  \end{align}
  In fact, we will find in the subsequent sections, this relatively low order is caused by the error between $\nabla u$ and $\nabla S_\delta u$ in $\Omega_{2\delta}$, where $ \Omega_{2\delta} = \left\{\xx\big|d(\xx,\partial\Omega)\leq 2\delta\right\}$.
  This means even if $u_h$ exactly equals to $u$, the error with respect to $\delta$ in this narrow band-region is only of half order. Without considering $\Omega_{2\delta}$, the error estimation becomes
  \begin{align}
    \norm{\nabla u - \nabla S_\delta u_h}_{L^2(\Omega\backslash\Omega_{2\delta})}^2\leq C\left(\frac{h^{k+1}}{\max\{\rho,\delta\}} + \delta\right)\norm{u}_{H^{\max\{k+1,3\}}(\Omega)}.\label{eq:gradient approximation2}
  \end{align}
\end{remark}

\section{Error analysis of finite element method}
\label{sec:Error analysis of finite element method}
The proof of the error estimations in Section \ref{sec:Nonlocal diffusion model and conformal finite element discretization} will be present in this section. We start from some technical results. Then both Theorem \ref{th:main} and Theorem \ref{th:gradient} can be derived based on these results.

\subsection{Technical results.}
In order to analyze our nonlocal finite element scheme, we should introduce the following nonlocal energy at first.
\begin{align}
  (E_\delta(v))^2
  &=\frac{1}{2\delta^2}\into\into \Rd (u(\xx)-u(\yy))^2\d\xx\d\yy+\into\into\bRd u(\xx)u(\yy)\d\xx\d\yy.\label{eq:nonlocal energy}
\end{align}
It is easy to verify $(E_\delta(v))^2$ is actually the inner product of $L_\delta v$ and $v$.
For $E_\delta(v)$, we have some technical results.
\begin{lemma}
  \label{lemma:three results}
  There exist constants $C$ independent of $\delta$ such that for $v\in L^2(\Omega)$ along with $E_\delta(v)$ and $S_\delta v$ defined in (\ref{eq:nonlocal energy})(\ref{eq:Sdelta}),  
  \begin{align}
    &E_\delta(v)\leq \frac{C}{\delta}\norm{v}_{L^2(\Omega)}\label{eq: ineq1}\\
    &\norm{\nabla (S_\delta v)}_{L^2(\Omega)}\leq CE_\delta(v)\label{eq: ineq2}\\
    &\norm{v}_{L^2(\Omega)}\leq CE_\delta(v).\label{eq: ineq3}
  \end{align}
\end{lemma}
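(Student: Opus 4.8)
The plan is to treat the three estimates separately, from easiest to hardest. Throughout I read the energy \eqref{eq:nonlocal energy} as being in terms of $v$ (the $u$ appearing there being a typo for $v$), and I rely on two elementary kernel facts: each of $R_\delta$ and $\bar R_\delta$ has uniformly bounded mass, i.e. $\into R_\delta(\xx,\yy)\d\yy\le C$ and $\into\bRd\d\yy\le 1$ for every $\xx$; and, by the nondegeneracy assumption (c) together with an interior cone condition on $\Omega$, the weight obeys a two-sided bound $c_0\le w_\delta(\xx)\le C$ uniformly in $\xx$ and $\delta$. For \eqref{eq: ineq1} I would expand $(v(\xx)-v(\yy))^2\le 2v(\xx)^2+2v(\yy)^2$ in the first term, use symmetry of $R_\delta$ and the mass bound to get $\tfrac1{\delta^2}\into v(\xx)^2 w_\delta(\xx)\d\xx\le \tfrac{C}{\delta^2}\Ltnorm{v}^2$, and bound the second term by Young's inequality and the mass bound of $\bar R_\delta$; this yields $E_\delta(v)^2\le \tfrac{C}{\delta^2}\Ltnorm{v}^2$.

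The main content is \eqref{eq: ineq2}. First I would differentiate $S_\delta v=u_\delta/w_\delta$ (with $u_\delta(\xx)=\into\Rd v(\yy)\d\yy$), using $\nabla_\xx w_\delta=\into\nabla_\xx\Rd\d\yy$ and the identity $v(\yy)-S_\delta v(\xx)=\tfrac1{w_\delta(\xx)}\into R_\delta(\xx,\zz)(v(\yy)-v(\zz))\d\zz$, to reach the averaged form
\[\nabla S_\delta v(\xx)=\frac{1}{w_\delta^2(\xx)}\into\into \nabla_\xx R_\delta(\xx,\yy)\,R_\delta(\xx,\zz)\,(v(\yy)-v(\zz))\d\zz\d\yy.\]
Differentiating the kernel explicitly gives the exact pointwise identity $|\nabla_\xx R_\delta(\xx,\yy)|=\tfrac1\delta\,\hat R_\delta(\xx,\yy)$, where $\hat R_\delta$ is built from $\hat R(r)=|R'(r)|\sqrt r$ by the scaling \eqref{eq:kernel def}; this $\hat R$ is bounded with support in $[0,1]$, so $\hat R_\delta$ again has bounded mass. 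Splitting $|v(\yy)-v(\zz)|\le|v(\yy)-v(\xx)|+|v(\xx)-v(\zz)|$, integrating out the spare variable, and using $c_0\le w_\delta\le C$, I would obtain
\[|\nabla S_\delta v(\xx)|\le \frac{C}{\delta}\into \hat R_\delta(\xx,\yy)|v(\xx)-v(\yy)|\d\yy+\frac{C}{\delta}\into R_\delta(\xx,\yy)|v(\xx)-v(\yy)|\d\yy,\]
and then Cauchy--Schwarz against each kernel (using its bounded mass) followed by integration in $\xx$ gives $\norm{\nabla S_\delta v}_{L^2(\Omega)}^2\le \tfrac{C}{\delta^2}\into\into(\hat R_\delta+R_\delta)(\xx,\yy)(v(\xx)-v(\yy))^2\d\xx\d\yy$. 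The $R_\delta$ term is exactly $\le 2\,E_\delta(v)^2$, so the remaining and genuinely delicate step is to dominate the $\hat R_\delta$ difference-energy by the $R_\delta$ one. I expect this to be the main obstacle in \eqref{eq: ineq2}, since $\hat R$ need not be pointwise comparable to $R$ near the edge of the support (and a crude $L^2$ bound would cost a spurious factor $1/\delta$). I would close it by a chaining argument: for $|\xx-\yy|\le 2\delta$ insert an intermediate point $\mathbf{m}$ ranging over a ball of radius $\sim\delta$ about the midpoint, apply $(v(\xx)-v(\yy))^2\le 2(v(\xx)-v(\mathbf{m}))^2+2(v(\mathbf{m})-v(\yy))^2$ on pairs where $R_\delta$ is nondegenerate by (c), and average in $\mathbf{m}$, thereby converting the $\hat R_\delta$ energy into a constant multiple of the $R_\delta$ energy.

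Finally, \eqref{eq: ineq3} is a coercivity/Poincar\'e estimate uniform in $\delta$, which I regard as the deepest of the three. Since $E_\delta(v)^2=\langle L_\delta v,v\rangle$ with $L_\delta$ self-adjoint and nonnegative, the point is a lower spectral bound uniform in $\delta$. I would split $v=v_\Omega+\tilde v$ into its mean $v_\Omega=\tfrac1{|\Omega|}\into v$ and a mean-zero fluctuation $\tilde v$. The fluctuation is controlled by the nonlocal Poincar\'e inequality $\Ltnorm{\tilde v}^2\le \tfrac{C}{\delta^2}\into\into R_\delta(\xx,\yy)(v(\xx)-v(\yy))^2\d\xx\d\yy$, with $C$ independent of $\delta$ for connected $\Omega$, so $\tilde v$ is bounded by the first term of $E_\delta(v)^2$. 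The mean is recovered from the second term: on constants $\into\into\bar R_\delta\, v_\Omega^2\ge c\,|\Omega|\,v_\Omega^2$, while the cross term $\into\into\bar R_\delta\, v_\Omega\tilde v$ is handled through the near-constancy of $\into \bar R_\delta(\xx,\yy)\d\xx$ and the already-controlled $\Ltnorm{\tilde v}$. Combining the two pieces yields $\Ltnorm{v}^2\le C E_\delta(v)^2$. The crux throughout is the two uniform-in-$\delta$ inequalities---the kernel comparison in \eqref{eq: ineq2} and the nonlocal Poincar\'e inequality in \eqref{eq: ineq3}---both of which ultimately rest on the nondegeneracy condition (c) and the connectedness of $\Omega$.
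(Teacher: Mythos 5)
Your proof of \eqref{eq: ineq1} matches the paper's. But your arguments for \eqref{eq: ineq2} and \eqref{eq: ineq3} both rest on an unjustified step, and it is precisely the step that forms the core of the paper's proof. You implicitly assume that the difference energy $\frac{1}{2\delta^2}\into\into \Rd (v(\xx)-v(\yy))^2\d\xx\d\yy$ is bounded by $C(E_\delta(v))^2$: in \eqref{eq: ineq2} when you write ``the $R_\delta$ term is exactly $\le 2E_\delta(v)^2$,'' and in \eqref{eq: ineq3} when you say the fluctuation is ``bounded by the first term of $E_\delta(v)^2$'' (and earlier when you take nonnegativity of $L_\delta$ as given). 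This is not automatic, because the second term of the energy, $\into\into \bRd v(\xx)v(\yy)\d\xx\d\yy$, is \emph{not} a nonnegative quadratic form under assumptions (a)--(c): $\bar{R}_\delta$ is a general nonnegative compactly supported radial kernel, not a positive-definite one (near-indicator kernels have sign-changing Fourier transforms), so the energy can a priori be smaller than its first term. The paper closes exactly this hole with inequality (\ref{eq: control}), whose proof uses the kernel-comparison estimate $\into\into \bRd (v(\xx)-v(\yy))^2\d\xx\d\yy\le C\into\into\Rd(v(\xx)-v(\yy))^2\d\xx\d\yy$ cited from \cite{wang2023nonlocal}: this gives the lower bound (\ref{eq:two kernel}) on the cross term, and then for small $\delta$ the factor $1/(2\delta^2)$ beats the constant $C$, so the difference energy is controlled by $(E_\delta(v))^2$. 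Without this ingredient (or an equivalent), both of your arguments are incomplete.

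Two secondary points. For \eqref{eq: ineq2}, you re-derive the gradient bound from scratch; the paper simply cites \cite{shi2017convergence} for $\norm{\nabla(S_\delta v)}^2_{L^2(\Omega)}\le \frac{C}{2\delta^2}\into\into\Rd(v(\xx)-v(\yy))^2\d\xx\d\yy$. Your identity for $\nabla S_\delta v$ and the kernel $\hat{R}$ are correct, and your chaining idea is the standard way such kernel comparisons are proved (it is essentially the content of the cited comparison lemma), though it needs care near nonconvex corners of a polyhedral domain, where midpoints may leave $\Omega$. For \eqref{eq: ineq3}, even granting the nonlocal Poincar\'e inequality, your mean/fluctuation split only addresses the $v_\Omega$--$\tilde v$ cross term; the fluctuation--fluctuation term $\into\into\bRd \tilde v(\xx)\tilde v(\yy)\d\xx\d\yy$ can be negative of size $C\norm{\tilde v}^2_{L^2(\Omega)}$ with $C$ the kernel-mass constant, and the Poincar\'e constant is not at your disposal to absorb it; absorption again requires the kernel comparison together with smallness of $\delta$. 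The paper's route avoids Poincar\'e altogether: using $\bar{w}_\delta\ge c$ it writes $\norm{v}^2_{L^2(\Omega)}\le C\into\into\bRd v^2(\xx)\d\xx\d\yy$, symmetrizes this into $\frac12\into\into\bRd(v(\xx)-v(\yy))^2\d\xx\d\yy+\into\into\bRd v(\xx)v(\yy)\d\xx\d\yy$, and bounds both pieces by $(E_\delta(v))^2$ via (\ref{eq: control}); the zeroth-order term built into $L_\delta$ supplies the coercivity, so no spectral-gap argument is needed.
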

\begin{proof}
  We firstly prove estimation (\ref{eq: ineq1}). For the second term of $(E_\delta(v))^2$,
  \begin{align*}
    \into\into \bRd v(\xx)v(\yy)\d\xx\d\yy&\leq \frac{1}{2}\into\into\bRd (v^2(\xx)+v^2(\yy))\d\xx\d\yy\\
    &= \into v^2(\xx)\into \bRd \d\yy\d\xx\\
    &\leq C\norm{v}_{L^2(\Omega)}^2.
  \end{align*}
  As for the first term of $(E_\delta(v))^2$,
  \begin{align*}
    \frac{1}{2\delta^2}\into\into \Rd (u(\xx)-u(\yy))^2\d\xx\d\yy&=\frac{1}{\delta^2}\into\into \Rd (v^2(\xx)+v(\xx)v(\yy))\d\xx\d\yy\\
    &\leq \frac{2}{\delta^2}\into v^2(\xx)\into \Rd \d\yy\d\xx\\
    &\leq \frac{C}{\delta^2}\norm{v}_{L^2(\Omega)}^2.
  \end{align*}
  Here we have proved (\ref{eq: ineq1}).

  For the second result (\ref{eq: ineq2}), \cite{shi2017convergence} provides an inequality
  \begin{align}
    \norm{\nabla (S_\delta v)}^2_{L^2(\Omega)}\leq \frac{C}{2\delta^2}\into\into \Rd (v(\xx)-v(\yy))^2\d\xx\d\yy.\label{eq:nabla S}
  \end{align}
  We just need to show the first term of $(E_\delta(v))^2$ can be bounded by $(E_\delta(v))^2$, i.e.
  \begin{align}
    \label{eq: control}
    \frac{1}{2\delta^2}\into\into\Rd (v(\xx)-v(\yy))^2\d\xx\d\yy\leq C(E_\delta(v))^2.
  \end{align}
  In fact, we can get (\ref{eq: control}) with the following estimation.
  \begin{align}
    &\into\into \bRd v(\xx)v(\yy)\d\xx\d\yy\notag\\
    \geq&-\frac{1}{4}\into\into\bRd (v(\xx)-v(\yy))^2\d\xx\d\yy\notag\\
    \geq&-C\into\into\Rd (v(\xx)-v(\yy))^2\d\xx\d\yy.\label{eq:two kernel}
  \end{align}
  The last inequality above can be found in \cite{wang2023nonlocal}. With these estimations, we can conclude (\ref{eq: ineq2}).

  We lastly turn to the proof of (\ref{eq: ineq3}). By reusing the last inequality in (\ref{eq:two kernel}) and denoting $\bar{w}_\delta(\xx)=\into \bRd\d\yy$,
  we get 
  \begin{align*}
    \norm{v}_{L^2(\Omega)}^2&=\into v^2(\xx)\frac{1}{\bar{w}_\delta(\xx)}\into \bRd\d\yy\d\xx\\
    &\leq C\into\into \bRd v^2(\xx)\d\xx\d\yy\\
    &=\frac{C}{2}\into\into \bRd v^2(\xx)\d\xx\d\yy+\frac{C}{2}\into\into \bRd v^2(\yy)\d\xx\d\yy\\
    &=\frac{C}{2}\into\into \bRd (v(\xx)-v(\yy))^2\d\xx\d\yy+C\into\into \bRd v(\xx)v(\yy)\d\xx\d\yy\\
    &\leq C\into\into \Rd (v(\xx)-v(\yy))^2\d\xx\d\yy+C\into\into \bRd v(\xx)v(\yy)\d\xx\d\yy.
  \end{align*}
  Here we have finished the proof of Lemma \ref{lemma:three results}.
\end{proof}

Moreover, with the help of (\ref{eq: control})(\ref{eq: ineq3}), we can prove $E_\delta$ is weakly subadditive, i.e. 
\begin{lemma}
  \label{lemma: subadditive}
  There exists $C$ independent of $\delta$ such that for $v,w\in L^2(\Omega)$, 
  \begin{align*}
    E_\delta(v+w)\leq C(E_\delta(v)+E_\delta(w)).
  \end{align*}
\end{lemma}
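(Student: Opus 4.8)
The plan is to work with the square $(E_\delta(v+w))^2$ rather than $E_\delta$ itself, show that it is controlled by $(E_\delta(v))^2+(E_\delta(w))^2$, and then recover the stated inequality from the elementary bound $\sqrt{a^2+b^2}\le a+b$ for $a,b\ge 0$. First I would expand $(E_\delta(v+w))^2$ via the definition (\ref{eq:nonlocal energy}) into its two constituent pieces: the nonlocal ``difference'' term $\frac{1}{2\delta^2}\into\into\Rd((v+w)(\xx)-(v+w)(\yy))^2\d\xx\d\yy$ and the ``product'' term $\into\into\bRd(v+w)(\xx)(v+w)(\yy)\d\xx\d\yy$, and estimate the two separately.

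For the difference term I would write $D_v=v(\xx)-v(\yy)$ and $D_w=w(\xx)-w(\yy)$ and use the elementary inequality $(D_v+D_w)^2\le 2D_v^2+2D_w^2$. This bounds the difference term by twice the corresponding difference term of $v$ plus twice that of $w$, and each of these is in turn bounded by $C(E_\delta(v))^2$, respectively $C(E_\delta(w))^2$, which is exactly the control estimate (\ref{eq: control}) already at our disposal.

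The genuinely indefinite piece is the product term, and I expect this to be the main obstacle: unlike the difference term it is not a sum of squares, so one cannot simply split it into a $v$-part and a $w$-part while keeping the cross contribution under control. The remedy is the symmetrization already used in the proof of (\ref{eq: ineq1}): by $ab\le\tfrac12(a^2+b^2)$ together with the symmetry of $\bRd$, one has $\into\into\bRd u(\xx)u(\yy)\d\xx\d\yy\le \into u^2(\xx)\bar{w}_\delta(\xx)\d\xx\le C\norm{u}_{L^2(\Omega)}^2$, where $\bar{w}_\delta(\xx)=\into\bRd\d\yy$ is bounded. Applying this with $u=v+w$ and then $\norm{v+w}_{L^2(\Omega)}^2\le 2\norm{v}_{L^2(\Omega)}^2+2\norm{w}_{L^2(\Omega)}^2$ reduces the product term to $L^2$ norms, which inequality (\ref{eq: ineq3}) converts into $C\bigl((E_\delta(v))^2+(E_\delta(w))^2\bigr)$.

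Combining the two estimates gives $(E_\delta(v+w))^2\le C\bigl((E_\delta(v))^2+(E_\delta(w))^2\bigr)$, and taking square roots, together with $\sqrt{a^2+b^2}\le a+b$, yields the weak subadditivity with $C=\sqrt{C}$. I would note in passing that a conceptually cleaner route exists: $(E_\delta(\cdot))^2=\langle L_\delta\,\cdot\,,\cdot\rangle$ is a symmetric quadratic form, and inequality (\ref{eq: ineq3}) shows it is nonnegative, hence it is a genuine seminorm for which the triangle inequality holds with constant $1$ via Cauchy--Schwarz. However, the direct computation above is more elementary, relies only on the already-proven (\ref{eq: control}) and (\ref{eq: ineq3}), and produces precisely the form of the estimate (with an explicit constant) that the subsequent error analysis requires, so that is the argument I would present.
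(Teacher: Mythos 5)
Your proof is correct and follows essentially the same route as the paper's: expand $(E_\delta(v+w))^2$, bound the difference term via $(a+b)^2\le 2a^2+2b^2$ together with (\ref{eq: control}), reduce the product term to $L^2$ norms by symmetrization and then to energies via (\ref{eq: ineq3}), and take square roots; the only cosmetic difference is that you symmetrize the whole product term at once, whereas the paper first expands it into four pieces and handles the cross terms this way. Your parenthetical remark that $E_\delta$ is a genuine (semi)norm, so the triangle inequality actually holds with constant $1$, is also valid and slightly sharper, but the elementary computation you present is exactly what the paper does.
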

\begin{proof}
  We can find 
  \begin{align*}
    &(E_\delta(v+w))^2\\
    =&\frac{1}{2\delta^2}\into\into \Rd((v(\xx)+w(\xx))-(v(\yy)+w(\yy)))^2\\
    &\hspace{1cm}+\into\into \bRd (v(\xx)+w(\xx))(v(\yy)+w(\yy))\d\xx\d\yy\\
    \leq& \frac{C}{2\delta^2}\into\into \Rd\left((v(\xx)-v(\yy))^2+(w(\xx)-w(\yy))^2\right)\d\xx\d\yy\\
    &\hspace{1cm}+\into\into \bRd (v(\xx)v(\yy)+w(\xx)w(\yy)+v(\xx)w(\yy)+w(\xx)v(\yy))\d\xx\d\yy\\
    \leq&C(E_\delta(v))^2+C(E_\delta(w))^2+C\into\into \bRd (v^2(\xx)+w^2(\xx)+v^2(\yy)+w^2(\yy))\d\xx\d\yy\\
    \leq&C(E_\delta(v))^2+C(E_\delta(w))^2+C\norm{v}_{L^2(\Omega)}^2+C\norm{w}_{L^2(\Omega)}^2\\
    \leq&C(E_\delta(v))^2+C(E_\delta(w))^2,
  \end{align*}
  which implies the result we need.
\end{proof}

Besides the estimations about $E_\delta(v)$ itself, there are also some results concerning $E_\delta(v)$ and $L_\delta v$, which will be used in the subsequent analysis.
\begin{lemma}
  \label{lemma: technical results-2}
  For $v,w\in L^2(\Omega)$ and $E_\delta(v)$ defined as in (\ref{eq:nonlocal energy}), we have 
  \begin{align}
    &\norm{L_\delta v}_{L^2(\Omega)}\leq \frac{C}{\delta}E_\delta(v),\label{eq:ineq4}\\
    &|\left<L_\delta v,w\right>_\Omega|\leq CE_\delta(v)\norm{w}_{H^1(\Omega)},\label{eq:ineq5}
  \end{align}
  where $C$ is independent of $\delta$.
\end{lemma}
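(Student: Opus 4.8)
The plan is to establish the two estimates in Lemma~\ref{lemma: technical results-2} by directly expanding the definition of $L_\delta v$ and controlling each piece with the energy $E_\delta(v)$, using the inequalities already proved in Lemma~\ref{lemma:three results}. Throughout I will write $w_\delta(\xx)=\into \Rd\d\yy$ and $\bar w_\delta(\xx)=\into \bRd\d\yy$, both of which are bounded uniformly in $\delta$ by the normalization of the kernels, and I will keep the two pieces of $L_\delta v$ — the scaled difference term $\frac{1}{\delta^2}\into\Rd(v(\xx)-v(\yy))\d\yy$ and the averaging term $\into\bRd v(\yy)\d\yy$ — separate, since they scale differently in $\delta$.

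\medskip

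For the first estimate \eqref{eq:ineq4} I would bound $\norm{L_\delta v}_{L^2(\Omega)}$ by squaring and integrating in $\xx$. The main point is to apply Cauchy--Schwarz inside the $\yy$-integral against the kernel weight itself. For the difference term, writing $\Rd = \sqrt{\Rd}\cdot\sqrt{\Rd}$ and using Cauchy--Schwarz gives
\begin{align*}
  \left(\frac{1}{\delta^2}\into\Rd(v(\xx)-v(\yy))\d\yy\right)^2
  \le \frac{w_\delta(\xx)}{\delta^4}\into\Rd(v(\xx)-v(\yy))^2\d\yy,
\end{align*}
so that integrating in $\xx$ and using the uniform bound on $w_\delta(\xx)$ yields a quantity controlled by $\frac{C}{\delta^2}\cdot\frac{1}{\delta^2}\into\into\Rd(v(\xx)-v(\yy))^2\d\xx\d\yy$, which is at most $\frac{C}{\delta^2}(E_\delta(v))^2$ by \eqref{eq: control}. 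For the averaging term, the same Cauchy--Schwarz step against $\bRd$ bounds its $L^2$ norm by $C\norm{v}_{L^2(\Omega)}$, and then \eqref{eq: ineq3} converts this into $CE_\delta(v)\le \frac{C}{\delta}E_\delta(v)$ (the extra $\delta^{-1}$ is harmless for an upper bound as $\delta\le 1$). Combining the two pieces gives \eqref{eq:ineq4}.

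\medskip

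For the second estimate \eqref{eq:ineq5}, the key idea is that pairing against a test function $w$ that lies in $H^1$ lets us replace the crude $\delta^{-1}$ loss in \eqref{eq:ineq4} by a full factor of $E_\delta(v)$ times $\norm{w}_{H^1(\Omega)}$, with no negative power of $\delta$. I would first symmetrize the difference term: since $(E_\delta(v))^2=\left<L_\delta v,v\right>_\Omega$, one expects by a discrete integration-by-parts identity that
\begin{align*}
  \frac{1}{\delta^2}\into\into\Rd(v(\xx)-v(\yy))w(\xx)\d\yy\d\xx
  = \frac{1}{2\delta^2}\into\into\Rd(v(\xx)-v(\yy))(w(\xx)-w(\yy))\d\yy\d\xx.
\end{align*}
Applying Cauchy--Schwarz to the symmetrized form splits it into $\bigl(\frac{1}{2\delta^2}\into\into\Rd(v(\xx)-v(\yy))^2\bigr)^{1/2}$ times the analogous quantity for $w$. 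The first factor is $\le CE_\delta(v)$ by \eqref{eq: control}, and the crux of the whole lemma is the nonlocal-to-local estimate $\frac{1}{\delta^2}\into\into\Rd(w(\xx)-w(\yy))^2\d\xx\d\yy\le C\norm{w}_{H^1(\Omega)}^2$, which for smooth-enough $w$ follows from a Taylor expansion $w(\yy)-w(\xx)\approx\nabla w\cdot(\yy-\xx)$ together with the second-moment scaling $\int\Rd|\xx-\yy|^2\d\yy\sim\delta^2 w_\delta$ — this is the step I expect to be the main obstacle, both because it requires the $H^1$ regularity of $w$ (and likely a density argument to pass from smooth functions to general $w\in H^1$) and because the boundary layer $\Omega_{2\delta}$, where the ball $B(\xx,2\delta)$ is truncated by $\partial\Omega$, has to be handled carefully so that the constant stays independent of $\delta$. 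The averaging term is easier: pairing $\into\bRd v(\yy)\d\yy$ against $w(\xx)$ and using Cauchy--Schwarz bounds it by $C\norm{v}_{L^2(\Omega)}\norm{w}_{L^2(\Omega)}\le CE_\delta(v)\norm{w}_{H^1(\Omega)}$ via \eqref{eq: ineq3}. Summing the two contributions gives \eqref{eq:ineq5}.
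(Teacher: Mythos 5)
Your proposal is correct and follows essentially the same route as the paper: the identical Cauchy--Schwarz bound against the kernel weight (plus \eqref{eq: control} and \eqref{eq: ineq3}) for \eqref{eq:ineq4}, and for \eqref{eq:ineq5} the same kernel-symmetry integration-by-parts identity followed by Cauchy--Schwarz, with the $v$-factor controlled by \eqref{eq: control} and the averaging term by \eqref{eq: ineq3}. The nonlocal-to-local estimate you single out as the main obstacle is exactly the paper's Lemma \ref{lemma:Taylor1}, which the paper likewise does not prove but cites from \cite{shi2017convergence}, so your argument is complete modulo precisely the same external ingredient.
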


The inequality (\ref{eq:ineq4}) is easy to verify. In fact, 
\begin{align*}
  \norm{L_\delta v}^2_{L^2(\Omega)}&\leq \frac{C}{\delta^4}\into \left|\into \Rd (v(\xx)-v(\yy))\d\yy\right|^2\d\xx+C\into\left|\into \bRd v(\yy)\d\yy\right|^2\d\xx\\
  &\leq \frac{C}{\delta^4}\into \into \Rd (v(\xx)-v(\yy))^2\d\xx\d\yy+C\into\into \bRd v^2(\yy)\d\yy\d\xx\\
  &\leq \frac{C}{\delta^2}\frac{1}{\delta^2}\into \into \Rd (v(\xx)-v(\yy))^2\d\xx\d\yy+C\norm{v}_{L^2(\Omega)}^2\\
  &\leq \frac{C}{\delta^2}(E_\delta(v))^2.
\end{align*}
In the last inequality above, (\ref{eq: ineq3}) and (\ref{eq: control}) are used.

To prove the second result in Lemma  \ref{lemma: technical results-2}, the following estimation is in need.
\begin{lemma}
  \label{lemma:Taylor1}
  There exists a constant $C$ depending only on $\Omega$, such that for $v\in L^2(\Omega)$,
  \begin{align*}
    \into\into \Rd (v(\xx)-v(\yy))^2\d\xx\d\yy\leq C\delta^2\norm{v}_{H^1(\Omega)}^2.
  \end{align*}
\end{lemma}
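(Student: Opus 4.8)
The plan is to establish this as a nonlocal-to-local (Poincaré-type) estimate, exploiting that the kernel $\Rd$ is supported on $\{|\xx-\yy|\le 2\delta\}$ thanks to condition (b). The right-hand side is finite only when $v\in H^1(\Omega)$, so the statement is vacuous otherwise; I therefore assume $v\in H^1(\Omega)$ and, by a density argument, reduce to proving the bound for $v\in C^1(\overline{\Omega})$ with a constant independent of $v$.

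For such $v$, the first step is to write the difference along the straight segment joining $\yy$ and $\xx$ via the fundamental theorem of calculus,
\begin{equation*}
v(\xx)-v(\yy)=\int_0^1 \nabla v(\yy+t(\xx-\yy))\cdot(\xx-\yy)\,\d t,
\end{equation*}
and then apply Cauchy--Schwarz (both in the dot product and in the $t$-integral over the unit interval) to obtain
\begin{equation*}
(v(\xx)-v(\yy))^2\le |\xx-\yy|^2\int_0^1 |\nabla v(\yy+t(\xx-\yy))|^2\,\d t.
\end{equation*}
Inserting this into the double integral, switching to the offset variable $\mathbf{r}=\xx-\yy$ (so that $\Rd$ depends only on $|\mathbf{r}|$), and using Fubini, the integral over $\yy$ for each fixed $t$ and $\mathbf{r}$ is a translate of an integral of $|\nabla v|^2$; by translation invariance it is bounded by $\norm{\nabla v}_{L^2(\RRn)}^2$ after extending the $\yy$-domain to $\RRn$ (legitimate since the integrand is nonnegative). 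What remains is the purely radial factor
\begin{equation*}
\int_{|\mathbf{r}|\le 2\delta}\alpha_n\delta^{-n}R\!\left(\frac{|\mathbf{r}|^2}{4\delta^2}\right)|\mathbf{r}|^2\,\d\mathbf{r}
=\alpha_n S_n\,\delta^2\int_0^2 R(r^2/4)\,r^{n+1}\,\d r,
\end{equation*}
which is exactly $C\delta^2$ with $C$ depending only on $n$ and $R$; this produces the claimed $\delta^2$ scaling.

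The main obstacle is geometric: the segment $\{\yy+t(\xx-\yy):t\in[0,1]\}$ need not remain inside $\Omega$ when $\Omega$ is non-convex, so $\nabla v$ is not defined along it and the translation-invariance step over $\RRn$ breaks down. I would resolve this with a bounded Sobolev extension operator $E:H^1(\Omega)\to H^1(\RRn)$ satisfying $Ev=v$ on $\Omega$ and $\norm{Ev}_{H^1(\RRn)}\le C_\Omega\norm{v}_{H^1(\Omega)}$, replacing $v$ by $Ev$ in the fundamental-theorem step; since $\xx,\yy\in\Omega$ the left-hand side is unchanged, while all segment evaluations now make sense globally. This is precisely where the dependence of $C$ on $\Omega$ enters (through $C_\Omega$); if $\Omega$ is convex the extension is unnecessary and the constant depends only on $n$ and $R$. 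I expect the extension step and the bookkeeping of the change of variables to be the only nontrivial points, the radial integral and the Cauchy--Schwarz estimates being routine.
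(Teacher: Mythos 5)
Your proposal is correct, and it is worth noting that the paper itself gives no argument for this lemma at all: it simply defers to the reference \cite{shi2017convergence}. What you have written is the standard self-contained proof of such nonlocal-to-local estimates, and every step checks out: the reduction to $v\in H^1(\Omega)$ (the statement being vacuous otherwise), the fundamental-theorem-of-calculus representation along segments, the double Cauchy--Schwarz, the change of variables $\mathbf{r}=\xx-\yy$ with Tonelli and translation invariance, and the scaling computation of the radial factor, which indeed equals $\alpha_n S_n\,\delta^2\int_0^2 R(r^2/4)\,r^{n+1}\,\d r$ given the support condition $R(r)=0$ for $r>1$ (so the kernel vanishes for $|\xx-\yy|>2\delta$). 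You also correctly identified the one genuinely nontrivial point, non-convexity of $\Omega$, and the extension-operator fix is the right one; this is exactly where the constant's dependence on $\Omega$ enters. Two small pieces of polish: first, the extension operator exists for Lipschitz (in particular polyhedral) domains but not for arbitrary ``bounded and connected'' $\Omega$ as literally stated in the paper, so your proof quietly requires the same boundary regularity that the cited reference and the paper's FEM setting implicitly assume; second, it is cleaner to extend first and then mollify $Ev$ in $\RRn$ (or invoke absolute continuity of $H^1$ functions on almost every line) rather than to take $v\in C^1(\overline{\Omega})$ and then extend, since the Sobolev extension of a $C^1$ function need not be $C^1$ --- but this is an ordering issue, not a gap.
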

The proof of Lemma \ref{lemma:Taylor1} can be found in \cite{shi2017convergence}.
With this estimation, we can derive (\ref{eq:ineq5}) as follows.
\begin{align*}
  &|\left<L_\delta v,w\right>_\Omega|\\
  \leq& \frac{1}{2\delta^2}\left|\into\into \Rd (v(\xx)-v(\yy))(w(\xx)-w(\yy))\d\yy\d\xx\right|+\left|\into\into\bRd v(\yy)w(\xx)\d\xx\d\yy\right|\\
  \leq& \frac{1}{2\delta^2}\left(\into\into \Rd (v(\xx)-v(\yy))^2\d\xx\d\yy\right)^{\frac{1}{2}}\left(\into\into \Rd (w(\xx)-w(\yy))^2\d\xx\d\yy\right)^{\frac{1}{2}}\\
  &\qquad +\left(\into\into \bRd v^2(\yy)\d\yy\d\xx\right)^{\frac{1}{2}}\left(\into\into \bRd w^2(\xx)\d\yy\d\xx\right)^{\frac{1}{2}}\\
  \leq &C\left(\frac{1}{2\delta^2}\into\into \Rd (v(\xx)-v(\yy))^2\d\xx\d\yy\right)^{\frac{1}{2}}\left(\frac{1}{2\delta^2}\into\into \Rd (w(\xx)-w(\yy))^2\d\xx\d\yy\right)^{\frac{1}{2}}\\
  &\qquad + \norm{v}_{L^2(\Omega)}\norm{w}_{L^2(\Omega)}\\
  \leq& CE_\delta(v)\norm{w}_{H^1(\Omega)}.
\end{align*}
Here (\ref{eq: ineq3})(\ref{eq: control}) and Lemma \ref{lemma:Taylor1} are applied to get the last inequality above.

\subsection{Error analysis.}
We next start to analyze our nonlocal finite element method. Let $u$ solve the local model (\ref{eq:local model}) and $u_h$ be the solution of (\ref{eq:fem}). If $e_h=u-u_h$, we can find 
\begin{align*}
  \left<L_\delta e_h, v_h\right>_\Omega=\left<r,v_h\right>,\quad \forall v_h\in S_h.
\end{align*}
Here 
\begin{align*}
  r(\xx)=\frac{1}{\delta^2}\into \Rd (u(\xx)-u(\yy))\d\yy+\into \bRd \Delta u(\yy)\d \yy-2\intpo \bRd \pd{u}{\nn}(\yy)\d S_\yy.
\end{align*}
For this truncation error, we have the following lemma to decompose $r(\xx)$ into an interior error and a boundary error.
\begin{lemma}
  \label{lemma:truncation error}
  For arbitrary $u\in H^3(\Omega)$. We denote 
  \begin{align}
    \label{boundary error}
    r_{bd}(\xx)=\sum_{j=1}^n\intpo n^j(\yy)(\xx-\yy)\cdot \nabla(\nabla^j u(\yy))\bRd \d S_\yy
  \end{align}
  and 
  \begin{align*}
    r_{in}(\xx)=r(\xx)-r_{bd}(\xx),
  \end{align*}
  where $n^j(\yy)$ is the $j$-th component of the unit outward normal $\nn(\yy)$ at $\yy\in \partial\Omega$. Then there exist constants $C$ depending only on $\Omega$, such that  
  \begin{align*}
    \norm{r_{in}}_{L^2(\Omega)}\leq C\delta \norm{u}_{H^3(\Omega)}, \quad \norm{r_{bd}}_{L^2(\Omega)}\leq C\delta^{1/2}\norm{u}_{H^3(\Omega)}.
  \end{align*}
\end{lemma}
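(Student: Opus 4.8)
The plan is to reduce everything to a Taylor expansion governed by two structural identities of the kernels. The first is the differentiation relation $\Rd\,(\yy-\xx) = -2\delta^2\,\nabla_\yy\bRd$, which follows from $\bar R'=-R$ and the chain rule applied to the definitions in (\ref{eq:kernel def}). The second is a Green-type identity: integrating $\into\bRd\,\Delta u(\yy)\d\yy$ by parts and substituting the first relation gives
\begin{align*}
  \into\bRd\,\Delta u(\yy)\d\yy = \frac{1}{2\delta^2}\into\Rd\,(\yy-\xx)\cdot\nabla u(\yy)\d\yy + \intpo\bRd\,\pd{u}{\nn}(\yy)\d S_\yy.
\end{align*}
Substituting this into the definition of $r(\xx)$, one copy of the boundary integral is consumed and the residual collapses to
\begin{align*}
  r(\xx) = \frac{1}{\delta^2}\into\Rd\left[(u(\xx)-u(\yy))+\frac{1}{2}(\yy-\xx)\cdot\nabla u(\yy)\right]\d\yy - \intpo\bRd\,\pd{u}{\nn}(\yy)\d S_\yy.
\end{align*}

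Next I would Taylor expand both $u(\yy)$ and $\nabla u(\yy)$ about $\xx$ to second order. The key cancellation is that the quadratic terms of $u(\xx)-u(\yy)$ and of $\frac{1}{2}(\yy-\xx)\cdot\nabla u(\yy)$ are equal and opposite, so the bracket reduces to $-\frac{1}{2}\nabla u(\xx)\cdot(\yy-\xx)$ plus a purely cubic remainder. Applying the first kernel identity to this surviving linear term converts it, via the divergence theorem, into $\nabla u(\xx)\cdot\intpo\bRd\,\nn(\yy)\d S_\yy$, which combines with the remaining boundary integral to produce $\intpo\bRd\,(\nabla u(\xx)-\nabla u(\yy))\cdot\nn(\yy)\d S_\yy$. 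A first-order expansion of $\nabla u(\xx)-\nabla u(\yy)$ about $\yy$ then yields exactly $r_{bd}(\xx)$ as its leading part, so that $r_{in}(\xx)=r(\xx)-r_{bd}(\xx)$ consists precisely of the interior cubic remainder together with the second-order boundary remainder.

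It then remains to estimate the two groups in $L^2(\Omega)$. For the interior cubic piece $\frac{1}{\delta^2}\into\Rd\,Q_3(\xx,\yy)\d\yy$, I would write the third-order remainder $Q_3$ as a one-dimensional integral of $\nabla^3 u$ along the segment $[\xx,\yy]$, apply Cauchy--Schwarz against the kernel (whose $|\yy-\xx|^3\sim\delta^3$ weighting against the $1/\delta^2$ prefactor leaves one net power of $\delta$), and use Fubini with a change of variables to bound the resulting double integral by $\norm{u}^2_{H^3(\Omega)}$, giving $C\delta\norm{u}_{H^3(\Omega)}$; the second-order boundary remainder carries an extra factor $|\xx-\yy|\sim\delta$ and is absorbed into the same $O(\delta)$ bound. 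For $r_{bd}$, the decisive observation is that $\bRd$ forces $|\xx-\yy|\le 2\delta$, so $r_{bd}(\xx)$ is supported in the layer $\Omega_{2\delta}=\{d(\xx,\partial\Omega)\le 2\delta\}$ of measure $O(\delta)$; the factor $|\xx-\yy|\sim\delta$, the surface measure $O(\delta^{n-1})$, and $\bRd\sim\delta^{-n}$ combine to make the integrand $O(1)$ in magnitude, so integrating the square over the $O(\delta)$-thick band produces the half-order factor $C\delta^{1/2}\norm{u}_{H^3(\Omega)}$.

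The main obstacle is not the algebraic cancellation, which is clean once the two kernel identities are available, but the rigorous $L^2$ estimation of the remainders under the mere $H^3$ regularity of $u$. All expansions must be carried out with integral-form remainders (first for smooth $u$, then extended by density), the interior cubic term must be controlled through Fubini and a change of variables rather than a pointwise $C^3$ bound, and the surface terms require the trace theorem to make sense of $\nabla^2 u$ and $\nabla^3 u$ on $\partial\Omega$ and to convert the boundary integrals back into $\norm{u}_{H^3(\Omega)}$. Carefully tracking the powers of $\delta$ through the boundary layer $\Omega_{2\delta}$ — in particular pinpointing why $r_{bd}$ loses exactly half an order while every other contribution retains the full order $\delta$ — is the delicate bookkeeping at the heart of the lemma.
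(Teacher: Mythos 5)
The paper itself does not prove Lemma \ref{lemma:truncation error}; it defers entirely to \cite{shi2017convergence}. Your reconstruction follows what is essentially the standard argument behind that reference, and its algebraic core is correct: the identity $\nabla_\yy\bRd=-\frac{1}{2\delta^2}\Rd\,(\yy-\xx)$, the integration by parts absorbing $\into\bRd\,\Delta u(\yy)\d\yy$, the exact cancellation of the quadratic Taylor terms in $u(\xx)-u(\yy)+\frac12(\yy-\xx)\cdot\nabla u(\yy)$ (leaving $-\frac12\nabla u(\xx)\cdot(\yy-\xx)$ plus a cubic remainder), and the conversion of that surviving linear term via the divergence theorem into $\intpo\bRd\,\nabla u(\xx)\cdot\nn(\yy)\d S_\yy$, so that the boundary contribution becomes $\intpo\bRd\,(\nabla u(\xx)-\nabla u(\yy))\cdot\nn(\yy)\d S_\yy$ whose first-order expansion is exactly $r_{bd}$. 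Your estimate of $\norm{r_{bd}}_{L^2(\Omega)}$ (support in $\Omega_{2\delta}$, trace theorem, integration of an $O(1)$ integrand over a band of measure $O(\delta)$) is also sound, and the interior cubic term does yield $C\delta\norm{u}_{H^3(\Omega)}$ by the Fubini/change-of-variables device you describe.

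There is, however, one concrete gap: the second-order boundary remainder inside $r_{in}$, namely $\intpo\bRd\bigl[\nabla u(\xx)-\nabla u(\yy)-(\nabla\nabla u(\yy))(\xx-\yy)\bigr]\cdot\nn(\yy)\d S_\yy$, is not "absorbed into the same $O(\delta)$ bound" by the same device; applied literally, that argument diverges. Writing the remainder as $\int_0^1(1-t)(\xx-\yy)^T\nabla^2(\nabla u)(\yy+t(\xx-\yy))(\xx-\yy)\,\d t$, applying plain Cauchy--Schwarz and, for fixed $t$, changing variables $\zz=\yy+t(\xx-\yy)$ turns the kernel into $\bar R_{t\delta}(\zz,\yy)$, and the subsequent surface integration costs $\intpo \bar R_{t\delta}(\zz,\yy)\d S_\yy\le C/(t\delta)$; with only $\norm{\nabla^3 u}_{L^2(\Omega)}\le\norm{u}_{H^3(\Omega)}$ available, one is left with $\int_0^1\frac{1-t}{t}\d t=\infty$. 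The interior term escapes this because there the kernel is integrated over a volume, $\int R_{s\delta}(\xx,\zz)\d\xx\le C$ uniformly in $s$, whereas the boundary term integrates it over a surface. The repair — and the reason the lemma holds with only $H^3$ regularity — is a weighted Cauchy--Schwarz in the Taylor parameter, e.g. $\bigl(\int_0^1(1-t)|f(t)|\d t\bigr)^2\le C\int_0^1 t^{1/2}(1-t)|f(t)|^2\d t$, whose $t^{1/2}$ weight exactly compensates the $1/(t\delta)$ loss and restores the bound $C\delta^2\norm{u}_{H^3(\Omega)}^2$; the alternative of invoking the band estimate $\norm{v}_{L^2(\Omega_{t\delta})}^2\le Ct\delta\norm{v}_{H^1(\Omega)}^2$ for $v=\nabla^3u$ would instead require $u\in H^4(\Omega)$. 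A smaller caveat: every segment-based expansion needs $[\xx,\yy]\subset\Omega$, so convexity (or an extension argument) should be stated explicitly — the paper's own appendix restricts to a rectangle for precisely this kind of reason.
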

The proof of this lemma can be found in \cite{shi2017convergence}.
With the notations in Lemma \ref{lemma:truncation error}, we can get $e_h$ satisfies 
\begin{align}
  \left<L_\delta e_h, v_h\right>_\Omega=\left<r_{in}+r_{bd},v_h\right>,\quad \forall v_h\in S_h.\label{eq:e_h}
\end{align}

Additionally, we have another estimation to control the inner product of $r_{bd}$ and a $v\in L^2(\Omega)$ with $E_\delta(v)$.
\begin{lemma}
  \label{lemma:boundary error}
  Let $u\in H^3(\Omega)$ and $r_{bd}$ is defined as (\ref{boundary error}), then there exists a constant $C$ depending only on $\Omega$, for any $v\in S_h$,
  \begin{align*}
    \left|\into v(\xx)r_{bd}(\xx)\d\xx\right|\leq C\delta\norm{u}_{H^3(\Omega)}E_\delta(v).
  \end{align*}
\end{lemma}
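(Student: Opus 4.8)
The plan is to start from the definition (\ref{boundary error}) of $r_{bd}$, apply Fubini to move the $\xx$-integration inside, and collect the Hessian of $u$. Setting
\[
  \mathbf{G}(\yy):=\into v(\xx)(\xx-\yy)\bRd\,\d\xx
\]
and using the identity $\sum_{j=1}^n n^j(\yy)\nabla(\nabla^j u(\yy))=(\nabla^2 u(\yy))\,\nn(\yy)$, one rewrites $\into v(\xx)r_{bd}(\xx)\,\d\xx=\intpo\big[(\nabla^2 u(\yy))\nn(\yy)\big]\cdot\mathbf{G}(\yy)\,\d S_\yy$. A Cauchy--Schwarz on $\partial\Omega$ together with the trace embedding $\norm{\nabla^2 u}_{L^2(\partial\Omega)}\le C\norm{u}_{H^3(\Omega)}$ (available since $u\in H^3(\Omega)$, so $\nabla^2 u\in H^1(\Omega)$) reduces the entire estimate to proving $\norm{\mathbf{G}}_{L^2(\partial\Omega)}\le C\delta\,E_\delta(v)$; the factor $\delta$ obtained here is exactly what produces the claimed $C\delta\norm{u}_{H^3}E_\delta(v)$.

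For $\mathbf{G}$ I would exploit that $|\xx-\yy|\le 2\delta$ on the support of $\bRd$. Cauchy--Schwarz in $\xx$ gives $|\mathbf{G}(\yy)|^2\le\big(\into|\xx-\yy|^2\bRd\,\d\xx\big)\big(\into v^2\bRd\,\d\xx\big)\le C\delta^2\into v^2(\xx)\bRd\,\d\xx$, the first factor being $O(\delta^2)$ by scaling. Integrating over $\partial\Omega$ and exchanging order of integration, $\intpo|\mathbf{G}(\yy)|^2\,\d S_\yy\le C\delta^2\into v^2(\xx)\big(\intpo\bRd\,\d S_\yy\big)\,\d\xx$. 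Since $\intpo\bRd\,\d S_\yy\le C\delta^{-1}$ and vanishes outside the boundary layer $\Omega_{2\delta}$, this leaves $\intpo|\mathbf{G}|^2\,\d S_\yy\le C\delta\int_{\Omega_{2\delta}}v^2\,\d\xx$. Everything up to this point is elementary kernel bookkeeping.

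The remaining step, which I expect to be the genuine obstacle, is the boundary-layer estimate $\int_{\Omega_{2\delta}}v^2\,\d\xx\le C\delta\,E_\delta(v)^2$. This sharpens the crude bound $\int_{\Omega_{2\delta}}v^2\le\norm{v}_{L^2(\Omega)}^2\le CE_\delta(v)^2$ from (\ref{eq: ineq3}) by precisely the extra half power of $\delta$ that separates the sharp $O(\delta)$ conclusion from a useless $O(\delta^{1/2})$ one; a direct nonlocal computation is circular, since the layer mass reappears on the right-hand side. My plan is therefore to pass to the smoothed field $S_\delta v$: split $\int_{\Omega_{2\delta}}v^2\le 2\int_{\Omega_{2\delta}}(S_\delta v)^2+2\norm{v-S_\delta v}_{L^2(\Omega)}^2$, where the second term is $\le C\delta^2 E_\delta(v)^2$ by Jensen's inequality and (\ref{eq: control}). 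For the first term I would invoke the classical local estimate $\int_{\Omega_{2\delta}}w^2\le C\delta\norm{w}_{H^1(\Omega)}^2$ (trace theorem plus integration along the inward normal, with $C=C(\Omega)$) applied to $w=S_\delta v$, and bound $\norm{S_\delta v}_{H^1(\Omega)}\le CE_\delta(v)$ using (\ref{eq: ineq2}) for $\norm{\nabla S_\delta v}_{L^2}$ and $\norm{S_\delta v}_{L^2}\le\norm{v}_{L^2}+\norm{v-S_\delta v}_{L^2}\le CE_\delta(v)$ via (\ref{eq: ineq3}). Combining the pieces yields $\int_{\Omega_{2\delta}}v^2\le C\delta E_\delta(v)^2$, hence $\intpo|\mathbf{G}|^2\,\d S_\yy\le C\delta^2 E_\delta(v)^2$, and the lemma follows.
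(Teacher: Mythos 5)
Your proof is correct, but it is organized around a genuinely different decomposition than the paper's. The paper never isolates the boundary-layer mass $\int_{\Omega_{2\delta}}v^2\d\xx$: instead it bounds $|(\xx-\yy)\cdot\nabla(\nabla^j u(\yy))|\le C\delta\norm{H(u)(\yy)}$ pointwise, recognizes $\into\bRd|v(\xx)|\d\xx$ as (a bounded multiple of) the smoothed field $\bar{S}_\delta(|v|)$ evaluated at $\yy\in\partial\Omega$, applies Cauchy--Schwarz on $\partial\Omega$, and then uses the trace theorem on $\bar{S}_\delta(|v|)$ itself, establishing $\norm{\bar{S}_\delta(|v|)}_{H^1(\Omega)}\le CE_\delta(v)$ via the inequality $\bigl||v(\xx)|-|v(\yy)|\bigr|\le|v(\xx)-v(\yy)|$ together with (\ref{eq: control}) and (\ref{eq: ineq3}). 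You instead apply Cauchy--Schwarz in $\xx$ first, which reduces the lemma to the sharp layer estimate $\int_{\Omega_{2\delta}}v^2\d\xx\le C\delta(E_\delta(v))^2$, and you prove that by splitting $v=S_\delta v+(v-S_\delta v)$: the local estimate $\norm{w}_{L^2(\Omega_\delta)}^2\le C\delta\norm{w}_{H^1(\Omega)}^2$ (exactly the first lemma of the paper's Appendix \ref{Appendix:A}, so it is available in this setting) applied to $w=S_\delta v$, plus Jensen and (\ref{eq: control}) for the remainder. Both routes rest on the same mechanism---the nonlocal energy controls the $H^1$ norm of a smoothed field, and a trace-type inequality converts boundary information into that interior norm---but your version avoids the absolute-value trick and the operator $\bar{S}_\delta$ entirely, using only the facts about $S_\delta$ already stated in Lemma \ref{lemma:three results}, and it isolates a reusable intermediate fact (that $v$ carries only $O(\delta)$ mass, relative to $(E_\delta(v))^2$, in the $2\delta$-layer); the price is the extra splitting and the dependence on the classical layer estimate. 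Two points you should make explicit: the Jensen step uses $w_\delta(\xx)\ge c>0$ uniformly in $\Omega$ (standard for polyhedral domains, and used silently throughout the paper), and the bound $\intpo\bRd\d S_\yy\le C\delta^{-1}$ likewise uses that $\partial\Omega$ is Lipschitz so that the surface area of $\partial\Omega\cap B(\xx,2\delta)$ is $O(\delta^{n-1})$.
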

\begin{proof}
  \begin{align*}
    \left|\into v(\xx)r_{bd}(\xx)\d\xx\right|&=\left|\sum_{j=1}^n\into v(\xx)\intpo n^j(\yy)(\xx-\yy)\cdot \nabla (\nabla^j u(\yy))\bar{R}_\delta(\xx,\yy)\d S_\yy\d\xx\right|\\
    &\leq C\delta\intpo \norm{H(u)(\yy)}\into\bRd |v(\xx)|\d\xx\d S_\yy\\
    &\leq C\delta\norm{u}_{H^2(\partial\Omega)}\norm{\bar{S}_\delta(|v|)}_{L^2(\partial\Omega)}\\
    &\leq C\delta\norm{u}_{H^3(\Omega)}\norm{\bar{S}_\delta(|v|)}_{H^1(\Omega)},
  \end{align*}
  where $H(u)$ denotes the Hessian of $u$, and 
  \begin{align*}
    \bar{S}_\delta(v)(\xx)=\frac{1}{\bar{w}_\delta(\xx)}\into \bRd v(\yy)\d\yy,\quad \bar{w}_\delta(\xx)=\into \bRd \d\yy.
  \end{align*}
  Moreover, with (\ref{eq:two kernel}), it is easy to verify the results in Lemma \ref{lemma:three results} are also applicable to $\bar{S}_\delta(v)$. Hence, we can get
  \begin{align*}
    &\norm{\bar{S}_\delta(|v|)}_{H^1(\Omega)}^2\\
    \leq& \norm{\bar{S}_\delta(|v|)}_{L^2(\Omega)}^2+C(E_\delta(|v|))^2\\
    \leq& C\into\left(\into \bRd v(\yy)\d\yy\right)^2\d\xx+C(E_\delta(|v|))^2\\
    \leq& C\norm{v}_{L^2(\Omega)}^2+\frac{C}{\delta^2}\into\into \Rd(|v(\xx)|-|v(\yy)|)^2\d\xx\d\yy+C\into\into \bRd |v(\xx)v(\yy)|\d\xx\d\yy\\
    \leq& \frac{C}{\delta^2}\into\into \Rd(v(\xx)-v(\yy))^2\d\xx\d\yy+C\norm{v}_{L^2(\Omega)}^2\\
    \leq& C(E_\delta(v))^2.
  \end{align*}
  Here (\ref{eq: ineq3}) and (\ref{eq: control}) are used again in the last line. 
\end{proof}

\subsection{Proof of Theorem \ref{th:main}.}
With all those preparations above, we can now prove (\ref{eq:L2error result}).
Let $I_h$ denote the projection operator onto $S_h$. Following (\ref{eq:e_h}), we can get 
\begin{align}
  \left<L_\delta e_h,e_h\right>_\Omega&=\left<L_\delta e_h, u-I_h u\right>_\Omega+\left<L_\delta e_h, I_h u-u_h\right>\notag\\
  &=\left<L_\delta e_h, u-I_h u\right>_\Omega+\left<r_{in}+r_{bd}, I_h u-u_h\right>\label{eq-analysis-0}
\end{align}
Both the first and the second term in (\ref{eq-analysis-0}) can be estimated along two paths.
For the first term, (\ref{eq:ineq5}) gives the following estimation
\begin{align}
  \label{eq:analysis-1}
  \left<L_\delta e_h, u-I_h u\right>_\Omega\leq CE_\delta(e_h)\norm{u-I_hu}_{H^1(\Omega)}\leq C\frac{h^{k+1}}{\rho}E_\delta(e_h)\norm{u}_{H^{k+1}(\Omega)}.
\end{align} 
Here the following classical projection error estimation in finite element method 
\begin{align}
  \label{eq:projection error}
  \norm{u-I_h u}_{H^1(\Omega)}\leq C\frac{h^{k+1}}{\rho}\norm{u}_{H^{k+1}(\Omega)}
\end{align}
is applied in the last inequality.

Meanwhile, if we apply (\ref{eq:ineq4}), the first term can be estimated in another way as follows
\begin{align}
  \label{eq:analysis-1-1}
  \left<L_\delta e_h, u-I_h u\right>_\Omega\leq \norm{L_\delta e_h}_{L^2(\Omega)}\norm{u-I_h u}_{L^2(\Omega)}\leq C\frac{h^{k+1}}{\delta}E_\delta(e_h)\norm{u}_{H^{k+1}(\Omega)}.
\end{align}
In the second inequality, we use another projection error estimation
\begin{align}
  \label{eq:projection error2}
  \norm{u-I_h u}_{L^2(\Omega)}\leq Ch^{k+1}\norm{u}_{H^{k+1}(\Omega)}.
\end{align}
The results in (\ref{eq:analysis-1}) and (\ref{eq:analysis-1-1}) can be combined into
\begin{align}
  \label{eq:combine1}
  \left<L_\delta e_h, u-I_h u\right>_\Omega\leq C\frac{h^{k+1}}{\max\{\rho,\delta\}}E_\delta(e_h)\norm{u}_{H^{k+1}(\Omega)}.
\end{align}

We next turn to the second term in (\ref{eq-analysis-0}). On the one hand, this term can be estimated as follows.
\begin{align}
  \left<r_{in}+r_{bd}, I_h u-u_h\right>&\leq \norm{r_{in}}_{L^2(\Omega)}\norm{I_h u-u_h}_{L^2(\Omega)}+C\delta \norm{u}_{H^3(\Omega)}E_\delta(I_hu-u_h)\notag\\
  &\leq C\delta \norm{u}_{H^3(\Omega)}E_\delta(I_hu-u_h)\notag\\
  &\leq C\delta \norm{u}_{H^3(\Omega)}E_\delta (u-I_h u)+C\delta \norm{u}_{H^3(\Omega)}E_\delta (e_h)\notag\\
  &\leq C\delta \norm{u}_{H^3(\Omega)}\norm{u-I_h u}_{H^1(\Omega)}+C\delta \norm{u}_{H^3(\Omega)}E_\delta (e_h)\notag\\
  &\leq C\delta \frac{h^{k+1}}{\rho}\norm{u}_{H^3(\Omega)}\norm{u}_{H^{k+1}(\Omega)}+C\delta \norm{u}_{H^3(\Omega)}E_\delta (e_h).\label{eq:analysis-2}
\end{align}
Here Lemma \ref{lemma:boundary error}, Lemma \ref{lemma: subadditive} and (\ref{eq: ineq3})(\ref{eq:projection error}) are used in the above calculation. 
Additionally, in the fourth line, the estimation $E_\delta(u-I_hu)\leq C\norm{u-I_h u}_{H^1(\Omega)}$ is a natural corollary of Lemma \ref{lemma:Taylor1}.

On the other hand, the second term (\ref{eq-analysis-0}) can be estimated in another way. 
With Lemma \ref{lemma:boundary error}, Lemma \ref{lemma:truncation error}, (\ref{eq: ineq1}) and (\ref{eq:projection error2}),
\begin{align}
  \left<r_{in}+r_{bd}, I_h u-u_h \right>_\Omega
  &\leq  \|r_{in}\|_{L^2(\Omega)}\|I_h u-u_h\|_{L^2(\Omega)}+C\delta \|u\|_{H^3(\Omega)}  E_\delta(I_h u-u_h) \notag\\
  &\leq  C \delta\|u\|_{H^3(\Omega)} E_\delta(I_h u-u_h) \notag\\
  &\leq C \delta\|u\|_{H^3(\Omega)} E_\delta ( u-I_h u)+C \delta\|u\|_{H^3(\Omega)} E_\delta (e_h) \notag\\
  &\leq C \|u\|_{H^3(\Omega)} \|u-I_h u\|_{L^2(\Omega)}+C \delta\|u\|_{H^3(\Omega)} E_\delta (e_h) \notag\\
  &\leq C \delta\frac{h^{k+1}}{\delta}\|u\|_{H^3(\Omega)}\|u\|_{H^{k+1}(\Omega)} +C \delta\|u\|_{H^3(\Omega)} E_\delta (e_h).\label{eq:analysis-2-1}
\end{align}

Combining (\ref{eq:analysis-1})(\ref{eq:analysis-2}), we can get 
\begin{align*}
  (E_\delta(e_h))^2&=\left<L_\delta e_h,e_h\right>_\Omega\\
  &\leq C\frac{h^{k+1}}{\rho}E_\delta(e_h)\norm{u}_{H^{k+1}(\Omega)}+C\delta \frac{h^{k+1}}{\rho}\norm{u}_{H^3(\Omega)}\norm{u}_{H^{k+1}(\Omega)}+C\delta \norm{u}_{H^3(\Omega)}E_\delta (e_h),
\end{align*}
which implies that 
\begin{align*}
  E_\delta(e_h)\leq C\left(\frac{h^{k+1}}{\rho}+\delta\right)\norm{u}_{H^{\max\{k+1,3\}}(\Omega)}.
\end{align*}
In addition, combining (\ref{eq:analysis-1-1})(\ref{eq:analysis-2-1}), we can get 
\begin{align*}
  (E_\delta(e_h))^2&=\left<L_\delta e_h,e_h\right>_\Omega\\
  &\leq C\frac{h^{k+1}}{\delta}E_\delta(e_h)\norm{u}_{H^{k+1}(\Omega)}+C\delta \frac{h^{k+1}}{\delta}\norm{u}_{H^3(\Omega)}\norm{u}_{H^{k+1}(\Omega)}+C\delta \norm{u}_{H^3(\Omega)}E_\delta (e_h),
\end{align*}
which implies another estimation 
\begin{align*}
  E_\delta(e_h)\leq C\left(\frac{h^{k+1}}{\delta}+\delta\right)\norm{u}_{H^{\max\{k+1,3\}}(\Omega)}.
\end{align*}

These two results can have a combined form like (\ref{eq:combine1}). Moreover,
with (\ref{eq: ineq3}), we get a unified $L^2$ error estimation 
\begin{align*}
  \norm{e_h}_{L^2(\Omega)}\leq C\left(\frac{h^{k+1}}{\max\{\rho,\delta\}}+\delta\right)\norm{u}_{H^{\max\{k+1,3\}}(\Omega)}.
\end{align*}
Here the Theorem \ref{th:main} has been proved.
\subsection{Convergent approximation of gradient.}
We can further give an approximation to the gradient of the local solution. As mentioned in Remark \ref{remark:gradient approximation}, 
when we get the finite element solution $u_h$, $\nabla S_\delta u_h$ can serve as an approximation of $\nabla u$. In this section, we mainly focus on the proof of (\ref{eq:gradient approximation1}) and (\ref{eq:gradient approximation2}).
In our proof, the necessity to introduce a correction term in Theorem \ref{th:gradient} can be observed. As for the complicated proof of Theorem \ref{th:gradient}, it can be found in Appendix \ref{appendix:lemma-correction}.

We firstly divide the gradient error into two parts as follows.
\begin{align}
  \label{eq:analysis-3}
  \norm{\nabla u-\nabla S_\delta u_h}_{L^2(\Omega)}\leq C\norm{\nabla u-\nabla S_\delta u}_{L^2(\Omega)}+\norm{\nabla S_\delta e_h}_{L^2(\Omega)}.
\end{align}
The second term in (\ref{eq:analysis-3}) is easy to bound because from Lemma \ref{lemma:three results} we can get 
\begin{align}
  \label{eq:analysis-4}
  \norm{\nabla S_\delta e_h}_{L^2(\Omega)}\leq CE_\delta(e_h)\leq C\left(\frac{h^{k+1}}{\max\{\rho,\delta\}}+\delta\right)\norm{u}_{H^{\max\{k+1,3\}}(\Omega)}.
\end{align}

The remaining first term is independent of the finite element method. 
To estimate this term, we need more calculation. 
\begin{align}
  &\nabla (u-S_\delta u)\notag\\
  =&\nabla u(\xx)-\frac{1}{w_\delta(\xx)}\into \nabla_\xx \Rd u(\yy)\d\yy+\frac{1}{w^2_\delta(\xx)}\into \Rd u(\yy)\d\yy\into \nabla_\xx\Rd\d\yy\notag\\
  =&\nabla u(\xx)+\frac{1}{w_\delta(\xx)}\into \nabla_\yy \Rd u(\yy)\d\yy-\frac{1}{w^2_\delta(\xx)}\into \Rd u(\yy)\d\yy\into \nabla_\zz R_\delta(\xx,\zz)\d\zz\notag\\
  =&\nabla u(\xx)-\frac{1}{w_\delta(\xx)}\into \Rd \nabla u(\yy)\d\yy+\frac{1}{w_\delta(\xx)}\intpo \Rd u(\yy)\nn(\yy)\d S_\yy\notag\\
  &\hspace{2cm} - \frac{1}{w_\delta^2(\xx)}\into \Rd u(\yy)\d\yy\intpo R_\delta(\xx,\zz)\nn(\zz)\d S_\zz\notag\\
  =&\frac{1}{w_\delta(\xx)}\into\Rd (\nabla u(\xx)-\nabla u(\yy))\d\yy\notag\\
  &\hspace{2cm}+ \frac{1}{w^2_\delta(\xx)}\intpo\into \Rd R_\delta(\xx,\zz)(u(\zz)-u(\yy))\nn(\zz)\d\yy\d S_\zz.\label{eq:nabla u-Su}
\end{align}
This result indicates when $\xx\in \Omega\backslash\Omega_{2\delta}=\left\{\xx\in\Omega\big|d(\xx,\partial\Omega)>2\delta\right\}$, the second term in (\ref{eq:nabla u-Su}) vanishes. 
Therefore, it suffices to consider only the first term when proving (\ref{eq:gradient approximation2}). 
From Lemma \ref{lemma:Taylor1}, we can estimate the first term above as 
\begin{align}
  &\norm{\frac{1}{w_\delta(\xx)}\into\Rd (\nabla u(\xx)-\nabla u(\yy))\d\yy}_{L^2(\Omega)}^2\notag\\
  \leq& C\into\left(\into \Rd (\nabla u(\xx)-\nabla u(\yy))\d\yy\right)^2\d\xx\notag\\
  \leq& C\into\into \Rd|\nabla u(\xx)-\nabla u(\yy)|^2\d\xx\d\yy\notag\\
  \leq& C\delta^2\norm{u}_{H^2(\Omega)}^2.\label{eq:nabla-first}
\end{align}
Combining (\ref{eq:analysis-4}) and (\ref{eq:nabla-first}), we can conclude (\ref{eq:gradient approximation2}).
As for the proof of (\ref{eq:gradient approximation1}), the second term in (\ref{eq:nabla u-Su}) should be included. We need two lemmas to estimate this term.
\begin{lemma}
  For the kernel $R_\delta$ defined in Section \ref{sec:Nonlocal diffusion model}, we have the following estimation 
  \begin{align*}
    \into \Rd R_\delta(\xx,\zz)\d\xx\leq C R_{2\sqrt{2}\delta}(\yy,\zz),
  \end{align*}
  where $C$ is a constant independent of $\delta$.
\end{lemma}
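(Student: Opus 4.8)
The plan is to exploit the compact support of $R$ (condition (b)) together with the nondegeneracy condition (c), reducing everything to a comparison of the two sides on the set where the left-hand integrand does not vanish. First I would observe that if the product $\Rd\,R_\delta(\xx,\zz)$ is nonzero at some $\xx$, then both $|\xx-\yy|^2/(4\delta^2)\le 1$ and $|\xx-\zz|^2/(4\delta^2)\le 1$ by condition (b), so $|\xx-\yy|\le 2\delta$ and $|\xx-\zz|\le 2\delta$; the triangle inequality then forces $|\yy-\zz|\le 4\delta$. Consequently the left-hand side vanishes identically whenever $|\yy-\zz|>4\delta$, and it suffices to establish the inequality in the regime $|\yy-\zz|\le 4\delta$.

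Next I would bound the left-hand side from above. Using the definition \eqref{eq:kernel def} and bounding one of the two factors by its supremum $\alpha_n\delta^{-n}\norm{R}_{L^\infty}$, I can pull it out of the integral and bound $\into$ by the integral over all of $\RRn$ (the integrand is nonnegative). A change of variables $\xx-\yy=2\delta\,\mathbf{u}$ gives $\int_{\RRn}R\left(|\xx-\yy|^2/(4\delta^2)\right)\d\xx=(2\delta)^n\int_{\RRn}R(|\mathbf{u}|^2)\d\mathbf{u}$, which is a finite constant multiple of $\delta^n$ by the integrability of $R$. This yields $\into\Rd R_\delta(\xx,\zz)\d\xx\le C\delta^{-n}$ with $C$ independent of $\delta,\yy,\zz$.

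It remains to bound the right-hand side from below in the regime $|\yy-\zz|\le 4\delta$, and this is where the particular scaling factor $2\sqrt{2}$ is essential. When $|\yy-\zz|\le 4\delta$, the argument of $R$ in $R_{2\sqrt{2}\delta}(\yy,\zz)=\alpha_n(2\sqrt{2}\delta)^{-n}R\left(|\yy-\zz|^2/(4(2\sqrt{2}\delta)^2)\right)$ satisfies $|\yy-\zz|^2/(4(2\sqrt{2}\delta)^2)\le 16\delta^2/(32\delta^2)=\tfrac{1}{2}$, so the nondegeneracy condition (c) gives $R(\cdot)\ge\gamma_0$ and hence $R_{2\sqrt{2}\delta}(\yy,\zz)\ge\gamma_0\alpha_n(2\sqrt{2}\delta)^{-n}=c'\delta^{-n}$. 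Combining this lower bound with the upper bound of the previous step closes the estimate on $|\yy-\zz|\le 4\delta$, and together with the vanishing of the left-hand side elsewhere completes the proof.

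The only delicate point is the matching of constants: the support restriction $|\yy-\zz|\le 4\delta$ produced by the triangle inequality must land inside the region $r\le\tfrac{1}{2}$ where $R$ is guaranteed to be nondegenerate after the $2\sqrt{2}\delta$-rescaling, and one checks that $4\delta$ is exactly the threshold that makes this work. I expect no other obstacle; the remaining steps are routine scaling computations and the boundedness of $R$ that follows from $R\in C^1([0,+\infty))$ with compact support.
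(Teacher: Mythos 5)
Your proof is correct and complete: the support argument ($|\yy-\zz|\le 4\delta$ wherever the integrand is nonzero), the upper bound $C\delta^{-n}$ on the left-hand side via boundedness and integrability of $R$, and the lower bound $\gamma_0\alpha_n(2\sqrt{2}\delta)^{-n}$ on the right-hand side via the nondegeneracy condition (c) fit together exactly, and you correctly identified that $2\sqrt{2}$ is precisely the scale at which the threshold $|\yy-\zz|\le 4\delta$ lands inside the region $r\le\tfrac{1}{2}$ after rescaling. Note that the paper itself does not prove this lemma in its text --- it defers to the cited reference \cite{meng2023maximum} --- so there is no in-paper argument to compare against; your proof is the natural self-contained one, using only conditions (b), (c) and the boundedness of $R$ implied by (a) and (b), and it can stand in place of the citation.
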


We have proved this lemma in \cite{meng2023maximum}. The detailed proof can be found in the appendix of this article.
\begin{lemma}
  \label{lemma:Taylor}
  There exists a constant $C$ independent of $\delta$ such that for $u\in H^2(\Omega)$,
  \begin{align*}
    \intpo\into\Rd (u(\xx)-u(\yy))^2\d\xx\d S_\yy\leq C\delta^2\norm{u}_{H^2(\Omega)}^2.
  \end{align*}
\end{lemma}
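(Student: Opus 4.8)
The plan is to read this as the boundary analogue of Lemma~\ref{lemma:Taylor1}: the upgrade from $\|u\|_{H^1}$ to $\|u\|_{H^2}$ and the gain of the factor $\delta^2$ should both be produced by a trace theorem, with the power of $\delta$ coming from the second moment of the kernel. Since $\yy$ ranges only over $\partial\Omega$ and $R_\delta(\xx,\yy)$ forces $|\xx-\yy|\le 2\delta$, I would first extend $u$ to an $H^2(\RRn)$ function (so that Taylor expansion along segments issuing from a boundary point is legitimate even where $\partial\Omega$ is nonconvex), and then expand $u(\xx)$ about $\yy$ to first order, writing $u(\xx)-u(\yy)=\nabla u(\yy)\cdot(\xx-\yy)+\mathrm{Rem}(\xx,\yy)$ with $\mathrm{Rem}(\xx,\yy)=\int_0^1(1-t)(\xx-\yy)^\top D^2u(\yy+t(\xx-\yy))(\xx-\yy)\,\d t$. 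Using $(a+b)^2\le 2a^2+2b^2$, the double integral then splits into a linear main term and a remainder term, which I would treat separately.

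For the linear term I would pull $|\nabla u(\yy)|^2$ out of the $\xx$-integral, so that the remaining kernel integral is $\into \Rd\,|\xx-\yy|^2\,\d\xx$; the scaling $\xx-\yy=\delta\ww$ together with the compact support of $R$ gives $\into \Rd\,|\xx-\yy|^2\,\d\xx\le C\delta^2$ uniformly in $\yy$. Integrating over $\partial\Omega$ leaves $C\delta^2\norm{\nabla u}_{L^2(\partial\Omega)}^2$, and the trace theorem applied to $\nabla u\in H^1(\Omega)$, namely $\norm{\nabla u}_{L^2(\partial\Omega)}\le C\norm{u}_{H^2(\Omega)}$, produces exactly $C\delta^2\norm{u}_{H^2(\Omega)}^2$. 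This term is the dominant one and already makes transparent both the appearance of $\delta^2$ and the necessity of the $H^2$ norm.

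For the remainder I would use $|\mathrm{Rem}(\xx,\yy)|^2\le |\xx-\yy|^4\int_0^1|D^2u(\yy+t(\xx-\yy))|^2\,\d t$ and aim to show that its contribution is of strictly higher order in $\delta$. I expect the real difficulty to sit here. Because $u$ is only $H^2$, the Hessian $D^2u$ has no trace on $\partial\Omega$, so it cannot be frozen at $\yy$ (that would require $u\in H^3$); one is forced to convert the boundary-times-domain integral of $|D^2u|^2$ into a volume integral over the boundary layer $\Omega_{2\delta}=\{\xx:\ d(\xx,\partial\Omega)\le 2\delta\}$. The trap is that the naive change of variables $\zz=\yy+t(\xx-\yy)$ along straight oblique segments over-counts interior points — the ray density behaves like $|\zz-\yy|^{-(n-1)}$ — and generates a spurious logarithmic weight $\log(2\delta/d(\zz))$ in the layer, which is \emph{not} controlled by $\norm{u}_{H^2}$ alone (a boundary-concentrated Hessian gives a counterexample to the weighted bound). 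The delicate step is therefore to perform this reduction sharply, exploiting the normal (boundary-layer) structure rather than oblique rays, so that the effective kernel weight is $O(\delta^{-1})$ without the logarithm; combined with $\norm{D^2u}_{L^2(\Omega_{2\delta})}\le\norm{u}_{H^2(\Omega)}$ and the $\delta^4$ from the fourth moment, this yields a genuinely higher-order $O(\delta^3)$ contribution for the remainder, which is absorbed into the target $C\delta^2\norm{u}_{H^2(\Omega)}^2$. Thus the leading behavior and the regularity requirement are immediate from the linear term, and the whole technical weight of the proof is the logarithm-free boundary-layer bookkeeping for the remainder, exactly as the parallel with Lemma~\ref{lemma:Taylor1} and its proof in \cite{shi2017convergence} suggests.
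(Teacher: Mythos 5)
Your decomposition is genuinely different from the paper's, and the step you yourself flag as "the real difficulty" is a step that fails: the remainder estimate cannot be completed in the form you propose. After Cauchy--Schwarz your remainder contribution is majorized by
\begin{align*}
M:=\intpo\into \Rd\,|\xx-\yy|^4\int_0^1 \big|D^2u(\yy+t(\xx-\yy))\big|^2\,\d t\,\d\xx\,\d S_\yy,
\end{align*}
and your plan requires $M\le C\delta^3\norm{u}_{H^2(\Omega)}^2$. But the logarithm is not an artifact of "oblique rays" that sharper, normal-direction bookkeeping can remove: it is genuinely present in $M$ itself. Writing the $(t,\xx)$ integral in polar coordinates about $\yy$ and exchanging the integration with $\yy\in\partial\Omega$ gives, with matching upper \emph{and} lower bounds (up to constants, near a flat face, using that $R\ge\gamma_0$ on $[0,1/2]$),
\begin{align*}
M\;\asymp\;\delta^3\int_{\Omega_{2\delta}}|D^2u(\zz)|^2\left(1+\log\frac{2\delta}{d(\zz,\partial\Omega)}\right)\d\zz .
\end{align*}
Since $D^2u$ is merely $L^2$, taking $|D^2u|^2$ with unit mass concentrated in $\{d(\zz,\partial\Omega)<\epsilon\}$ (e.g. $u=\epsilon^{3/2}\phi(x_2/\epsilon)\psi(x_1)$) makes $M\gtrsim\delta^3\log(2\delta/\epsilon)$ while $\norm{u}_{H^2(\Omega)}\approx 1$, so no constant $C$ independent of $u$ can give $M\le C\delta^3\norm{u}_{H^2(\Omega)}^2$ --- this is precisely the counterexample you mention, and it defeats your own target bound, not just a naive intermediate step. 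Any argument that first applies Cauchy--Schwarz to the Taylor remainder and then estimates the segment-averaged $|D^2u|^2$ hits this wall regardless of how the reduction to the boundary layer is parametrized; the only escape within your decomposition would be to exploit cancellation in $\mathrm{Rem}$ \emph{before} squaring (in the concentration example the true remainder contribution is $O(\epsilon)$, far below the majorant), and you provide no mechanism for that.

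The paper's proof shows how to avoid the obstruction entirely: never introduce $D^2u$. It keeps only the first-order representation $u(\xx)-u(\yy)=\int_0^1\nabla u(\yy+s(\xx-\yy))\cdot(\xx-\yy)\,\d s$, extracts $|\xx-\yy|^2\le C\delta^2$ from the kernel support, and for each fixed $s$ changes variables $\zz=\yy+s(\xx-\yy)$, which turns $\Rd$ into a kernel of scale $s\delta$. Integrating that kernel over $\yy\in\partial\Omega$ costs a factor $C/(s\delta)$, and this factor is cancelled --- uniformly in $s$, hence with no logarithm from $\int_0^1\d s$ --- by the boundary-layer estimate $\norm{\nabla u}^2_{L^2(\Omega_{2s\delta})}\le C\,s\delta\,\norm{u}^2_{H^2(\Omega)}$, the technical lemma proved at the start of Appendix~\ref{Appendix:A}, which is available precisely because $\nabla u\in H^1(\Omega)$. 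So the $H^2$ norm enters not through the trace of $\nabla u$ (your linear term, which is correct but is not the crux) but through this $O(s\delta)$ smallness of $\nabla u$ on thin boundary layers; your decomposition destroys exactly this one extra degree of regularity by differentiating again, and that is what creates the uncontrollable logarithm.
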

The proof of this Lemma is put in Appendix \ref{Appendix:A}.

With these two lemmas, we can estimate the second term in (\ref{eq:nabla u-Su}).
\begin{align}
  &\norm{\frac{1}{w^2_\delta(\xx)}\intpo\into \Rd R_\delta(\xx,\zz)(u(\zz)-u(\yy))\nn(\zz)\d\yy\d S_\zz}_{L^2(\Omega)}^2\notag\\
  \leq&C\into\left(\intpo\into R_\delta(\xx,\zz)\Rd |u(\zz)-u(\yy)|\d\yy\d S_\zz\right)^2\d\xx\notag\\
  \leq&C\into\left(\intpo\into R_\delta(\xx,\zz)\Rd (u(\zz)-u(\yy))^2\d\yy\d S_\zz\right)\left(\intpo\into R_\delta(\xx,\zz)\Rd\d\yy\d S_\zz\right)\d\xx\notag\\
  \leq&\frac{C}{\delta}\intpo\into (u(\zz)-u(\yy))^2 \into R_\delta(\xx,\zz)\Rd \d\xx \d\yy\d S_\zz\notag\\
  \leq&\frac{C}{\delta}\intpo\into R_{2\sqrt{2}\delta}(\yy,\zz)(u(\zz)-u(\yy))^2\d\yy\d S_\zz\notag\\
  \leq&C\delta \norm{u}_{H^2(\Omega)}.\label{eq:nabla-second}
\end{align}
Now (\ref{eq:gradient approximation1}) can be derived from (\ref{eq:analysis-4})(\ref{eq:nabla-first}) and (\ref{eq:nabla-second}).

We can find the $\nabla S_\delta u_h$ can only approximate $\nabla u$ with half order about $\delta$ because $\nabla S_\delta u$ loses accuracy near the boundary.
In other words, this relatively low order is caused by the operator $S_\delta$ rather than our finite element scheme.
To deal with this issue, we introduced a correction term in (\ref{eq:correction}). This term is designed for offsetting the principal error in $\nabla u - \nabla S_\delta u$. In detail,
\begin{lemma}
  \label{lemma:correction}
  For $u\in H^3(\Omega)$, we have 
  \begin{align*}
    \norm{\nabla u - (\nabla S_\delta u - \mathbf{F}_\delta)}_{L^2(\Omega)}\leq C\delta \norm{u}_{H^3(\Omega)}.
  \end{align*}
  Here $C$ is a constant independent of $\delta$.
\end{lemma}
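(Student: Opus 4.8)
The plan is to build directly on the decomposition (\ref{eq:nabla u-Su}) of $\nabla(u-S_\delta u)$. Since $\nabla u-(\nabla S_\delta u-\mathbf{F}_\delta)=\nabla(u-S_\delta u)+\mathbf{F}_\delta$, and the interior (first) term of (\ref{eq:nabla u-Su}) is already bounded by $C\delta\|u\|_{H^2(\Omega)}$ through (\ref{eq:nabla-first}), it suffices to estimate the boundary (second) term of (\ref{eq:nabla u-Su}) together with the correction, i.e. to show $\|T+\mathbf{F}_\delta\|_{L^2(\Omega)}\le C\delta\|u\|_{H^3(\Omega)}$ where $T(\xx)=\frac{1}{w_\delta^2(\xx)}\intpo\into\Rd R_\delta(\xx,\zz)(u(\zz)-u(\yy))\nn(\zz)\d\yy\d S_\zz$. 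Both $T$ and $\mathbf{F}_\delta$ vanish off the band $\Omega_{2\delta}$, whose measure is $O(\delta)$; consequently a pointwise bound of order $\sqrt{\delta}$ on $T+\mathbf{F}_\delta$ would already yield the $O(\delta)$ rate in $L^2$, and we will in fact reach order $\delta$ pointwise.

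First I would Taylor expand $u(\yy)$ about the boundary point $\zz$ to second order with remainder, $u(\yy)=u(\zz)+\nabla u(\zz)\cdot(\yy-\zz)+\tfrac12(\yy-\zz)^\T H(u)(\zz)(\yy-\zz)+r_3(\yy,\zz)$, and insert the Neumann condition $\nabla u(\zz)\cdot\nn(\zz)=g(\zz)$. Decomposing $\nabla u(\zz)$ and $\yy-\zz$ into their normal and tangential parts relative to $\nn(\zz)$, the first-order contribution to $u(\zz)-u(\yy)$ splits into a normal--normal piece $-g(\zz)((\yy-\zz)\cdot\nn(\zz))$ and a tangential piece $-\nabla_T u(\zz)\cdot(\yy-\zz)_T$, where $\nabla_T$ is the tangential gradient. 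By the very definition (\ref{eq:correction}), the normal--normal piece produces exactly $-\mathbf{F}_\delta$ after integration, so it cancels. Hence $T+\mathbf{F}_\delta$ reduces to the contributions of the tangential first-order piece, the Hessian piece, and the remainder $r_3$.

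The main obstacle is the tangential first-order piece, which a naive size count makes $O(1)$ pointwise (hence only $O(\sqrt{\delta})$ in $L^2$); its smallness must come from symmetry. Writing $\into\Rd(\yy-\zz)_T\d\yy$ in terms of the first moment $\mathbf{m}(\xx)=\into\Rd(\yy-\xx)\d\yy$ and the term $w_\delta(\xx)(\xx-\zz)$, I would introduce boundary-normal (foot-point) coordinates and exploit two evennesses: for a locally flat boundary the domain $\Omega\cap B(\xx,2\delta)$ is symmetric under tangential reflection about $\xx$, so the tangential component of $\mathbf{m}(\xx)$ vanishes; and $R_\delta(\xx,\zz)$ is even in the tangential displacement of $\zz$ about the foot of $\xx$, while $(\xx-\zz)_T$ and the tangential integrand are odd, so the $\zz$-integral of the leading part vanishes. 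The surviving terms are therefore generated only by boundary curvature and by the variation of $\nn(\zz)$ and $\nabla_T u(\zz)$ over a $\delta$-neighborhood, each of which contributes an extra factor $\delta$, leaving an $O(\delta)$ pointwise bound.

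Finally, the Hessian and remainder pieces are handled by moment estimates of the kernel together with the trace-type bound of Lemma \ref{lemma:Taylor} and the product-kernel inequality $\into\Rd R_\delta(\xx,\zz)\d\xx\le C R_{2\sqrt2\delta}(\yy,\zz)$: since $|\yy-\zz|\lesssim\delta$ on the support, the second-order term carries a factor $\delta^2$ in the $\yy$-integral and the remainder an even higher power, so both are $O(\delta)$ in $L^2$ after using the $H^3$ norm of $u$. The genuinely delicate point, and the one I expect to require the most care, is making the tangential-cancellation argument rigorous for a curved boundary with only $H^3$ regularity: this forces one to replace pointwise Taylor remainders by $L^2$-averaged (integral-form) estimates along $\partial\Omega$ and to control the geometric error of the normal-coordinate parametrization uniformly in $\delta$.
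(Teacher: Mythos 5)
Your skeleton is the same as the paper's: start from (\ref{eq:nabla u-Su}), cancel the normal--normal first-order piece of $u(\zz)-u(\yy)$ against $\mathbf{F}_\delta$ via the Neumann condition $\nabla u(\zz)\cdot\nn(\zz)=g(\zz)$, and then kill the tangential first-order piece by symmetry. On a flat portion of the boundary your two evenness observations are both correct and are exactly what the paper uses for its region $D_1$: the tangential component of $\into\Rd(\yy-\xx)\d\yy$ vanishes, and the leading $\zz$-integral vanishes by oddness about the foot point. The paper implements the second cancellation more cleanly than your freeze-and-Taylor plan: it observes that $R_\delta(\xx,\zz)(\xx-\zz)$ is, up to a constant, $\delta^2\nabla_{\Gamma,\zz}\bar{R}_\delta(\xx,\zz)$ and integrates by parts along the face, producing $\delta^2\intpo \bar{R}_\delta(\xx,\zz)\Delta_\Gamma u(\zz)\d S_\zz$, so only the trace bound $\norm{\Delta_\Gamma u}_{L^2(\partial\Omega)}\leq C\norm{u}_{H^3(\Omega)}$ is needed and no pointwise difference $\nabla_\Gamma u(\zz)-\nabla_\Gamma u(\zz_0)$ ever appears. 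Relatedly, the paper expands only to first order and keeps the second-order remainder in integral form ($T_2$, involving only $\partial_{ij}u$), whereas your second-order expansion with remainder $r_3$ drags in third derivatives along the boundary, which $H^3(\Omega)$ barely supports and which is avoidable.

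The genuine gap is corners. The lemma is stated for the polyhedral $\Omega$ of the finite element setting (the appendix works on a rectangle), so the geometric difficulty is not curvature but the corner set, and your proposal never treats it: your discussion of "boundary curvature" and the $O(\delta)$ variation of $\nn(\zz)$ in foot-point coordinates is the wrong model, because at a corner $\nn(\zz)$ jumps by an $O(1)$ amount rather than varying by $O(\delta)$. For $\xx$ within $2\delta$ of a corner, $B(\xx,2\delta)\cap\partial\Omega$ meets two faces, both of your reflection symmetries fail at order one, and your claimed pointwise $O(\delta)$ bound on the corrected boundary term is false there -- that term is of size $O(\norm{\nabla u}_{C(\bar\Omega)})$ pointwise. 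The estimate still closes, but by a different mechanism that your argument is missing and that the paper supplies with its region $D_2$: the corner neighborhoods have measure $O(\delta^2)$, and using the Sobolev embedding $H^2(\Omega)\hookrightarrow C(\bar\Omega)$ (dimension $2$ or $3$) one proves the small-region bound (\ref{eq:smallregion}), $\int_{D_2^1}v^2\d\xx\leq C\delta^2\norm{v}^2_{H^2(\Omega)}$ applied with $v=|\nabla u|$, so the corner contribution to the squared $L^2$ error is $O(\delta^2)$ because the region is small, not because the integrand is small. Without this separate corner argument (or some substitute), your proof does not go through on the domains the lemma is actually about.
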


The proof of this lemma is put in appendix \ref{appendix:lemma-correction}. With Lemma \ref{lemma:correction} and (\ref{eq:analysis-4}), we can get 
\begin{align*}
  \norm{\nabla u - (\nabla S_\delta u_h - \mathbf{F}_\delta)}_{L^2(\Omega)}&\leq \norm{\nabla u - (\nabla S_\delta u - \mathbf{F}_\delta)}_{L^2(\Omega)}+\norm{\nabla S_\delta e_h}_{L^2(\Omega)}\\
&\leq C\left(\frac{h^{k+1}}{\max\{\rho,\delta\}}+\delta\right)\norm{u}_{H^{\max\{k+1,3\}}(\Omega)}.
\end{align*}
This result means $(\nabla S_\delta u_h - \mathbf{F}_\delta)$ can approximate $\nabla u$ with a satisfactory accuracy. Additionally, both $\nabla S_\delta u_h$ and $\mathbf{F}_\delta$ can be computed efficiently. The implementation detail can be found in Appendix \ref{Appendix:implementation}. 
\section{Fast Implementation}
\label{sec: Fast Implementation}
In this section, a fast implementation of the nonlocal finite element scheme will be illustrated.

Two constrains in our implementation should be explained at first.
\begin{itemize}
  \item[(a)] (Gaussian kernel) $R(r)=e^{-s^2r}$, $r\in [0,\infty)$;
  \item[(b)] (Rectangular partitionable domain) $\Omega = \bigcup_\alpha T_\alpha$, where each $T_\alpha$ is $n$-dimensional tensor-product domain.
\end{itemize}

We remark that although $R(r)$ does not vanish for $r>1$, its exponential decay property still allows our previous proof to hold because we can adjust $s$ to make $R(r)$ small enough in $[1,+\infty)$.
Meanwhile, we require the region to be partitioned into a Cartesian mesh, and the finite element space is the corresponding piece-wise tensor product polynomial space. 
Of course, not all regions can satisfy such strict conditions. However, we typically apply the nonlocal model in a subregion with singularities, which usually allows us to define the required region ourselves. 
Therefore, our method is universally applicable.

With $R(r)$ defined as above, the kernel functions in our nonlocal diffusion model become 
\begin{align*}
  \Rd &= C_\delta e^{-\frac{s^2}{4\delta^2}\sum_{i=1}^n(x_i-y_i)^2}\\
  \bRd&= C_\delta \frac{1}{s^2}e^{-\frac{s^2}{4\delta^2}\sum_{i=1}^n(x_i-y_i)^2}.
\end{align*}
If the finite dimensional space $S_h$ is spanned by a basis $\{\psi_i\big|i=1,\cdots,N\}$, $u_h$ will be expressed as 
$u_h(\xx)=\sum_{i=1}^N c_i \psi_i(\xx)$. Then, following (\ref{eq:fem}), we can get $u_h$ by solving a linear system $\mathbf{A}\mathbf{c}=\mathbf{f}$, with the $(i,j)$-element of $\mathbf{A}$ being 
\begin{align}
  a_{ij}&=\left<L_\delta\psi_i,\psi_j\right>\notag\\
  &=\int_{\Omega}\left(\frac{1}{\delta^2}\int_{\Omega}\Rd (\psi_i(\xx)-\psi_i(\yy))\d\yy+\into \bRd \psi_i(\yy)\right)\psi_j(\xx)\d\xx\notag\\
  &=\frac{1}{\delta^2}\into\into \Rd \psi_j(\xx)\psi_i(\xx)\d\xx\d\yy - \frac{1}{\delta^2}\into\into \Rd \psi_i(\yy)\psi_j(\xx)\d\xx\d\yy\notag\\
  &\hspace{1cm}+\into\into \bRd \psi_i(\yy)\psi_j(\xx)\d\xx\d\yy,\label{eq:Implementation-1}
\end{align}
and the $j$-th component of right-hand side $\mathbf{f}$ being 
\begin{align}
  f_j&=\left<\bar{f}_\delta, \psi_j\right>\notag\\
  &=\into\into\bRd f(\yy)\psi_j(\xx)\d\yy\d\xx+2\into\intpo \bRd g(\yy)\psi_j(\xx)\d S_\yy\d \xx \label{eq:Implementation-2}
\end{align}
As mentioned in Section \ref{Sec:Introduction},
all the three terms in (\ref{eq:Implementation-1}) are in fact $2n$-dimensional integrals.
And we should calculate a series of this kind of integrals to assemble the stiff matrix. 
Moreover, we should also provide method to deal with the boundary integrals in (\ref{eq:Implementation-2}).
\subsection{Computation of stiff matrix.}
To get the stiff matrix, With these two configurations above, we designed a novel implementation which converts each $2n$-dimensional integral into the computation of $n$ double integrals. Moreover, our method entirely avoids the use of numerical quadrature.

We next illustrate our method in detail using the two-dimensional case as an example. Additionally, the constant coefficient $C_\delta$ in kernel functions is ignored as it can be eliminated from both sides of the equation.
In this case, region $\Omega$ is decomposed by rectangles and finite element space can be chosen as the classical piece-wise bilinear, biquadratic or bicubic polynomial space. 
Recalling the integrals in (\ref{eq:Implementation-1}), 
\begin{align*}
  \into\into \Rd \psi_j(\xx)\psi_i(\yy)\d\xx\d\yy
  =\sum_{T\in\mathcal{T}_h}\sum_{T'\in \mathcal{T}_h}\int_T\int_{T'}\Rd \psi_j(\xx)\psi_i(\yy)\d\xx\d\yy
\end{align*}
Following the construction of classical Lagrange element, basis functions have compact support. 
Therefore, same as the implementation of common finite element method, when $T$ and $T'$ traverse the mesh, 
we can compute the local stiffness matrix for each element, and then assemble these matrices into the global stiffness matrix.
In each rectangle $T\subset \text{supp}(\psi)$, the Lagrange basis functions have the following form
\begin{align*}
  &\psi(x_1,x_2)=C(x_1-\mu_1)(x_2-\mu_2),\quad (\text{bilinear}),\\
  &\psi(x_1,x_2)=C(x_1-\mu_{11})(x_1-\mu_{12})(x_2-\mu_{21})(x_2-\mu_{22}),\quad(\text{biquadratic}),\\
  &\psi(x_1,x_2)=C(x_1-\mu_{11})(x_1-\mu_{12})(x_1-\mu_{13})(x_2-\mu_{21})(x_2-\mu_{22})(x_2-\mu_{23}),\quad(\text{bicubic}).
\end{align*}
We uniformly express these forms as 
\begin{align*}
  \psi_j(x_1,x_2)=p_{j1}(x_1)p_{j2}(x_2),
\end{align*}
then, for $T=[a_1,b_1]\times[a_2,b_2]$ and $T'=[a_1',b_1']\times[a_2',b_2']$,
\begin{align}
  &\int_T\int_{T'}\Rd\psi_j(\xx)\psi_i(\yy)\d\xx\d\yy\notag\\
  =&\int_{a_1}^{b_1}\int_{a_2}^{b_2}\int_{a_1'}^{b_1'}\int_{a_2'}^{b_2'}e^{-\frac{s^2}{4\delta^2}[(x_1-y_1)^2+(x_2-y_2)^2]}p_{j1}(x_1)p_{j2}(x_2)p_{i1}(y_1)p_{i2}(y_2)\d x_1\d x_2\d y_1\d y_2\notag\\
  =&\prod_{l=1,2}\left[\int_{a_l}^{b_l}p_{jl}(x_l)\int_{a_l'}^{b_l'}e^{-\frac{s^2}{4\delta^2}(x_l-y_l)^2}p_{il}(y_l)\d y_l\d x_l\right]\label{eq:form1}
\end{align}
Here we can see the 4-fold integral are transferred to the product of two double integrals. 
The third term in (\ref{eq:Implementation-1}) has exactly the same form as above since the two kernel functions differ only by a constant factor. 
Meanwhile, the first term in (\ref{eq:Implementation-1}) can also be treated in a same way with 
\begin{align}
  &\int_T\int_{T'} \Rd \psi_j(\xx)\psi_i(\xx)\d\xx\d\yy
  =\prod_{l=1,2}\left[\int_{a_l}^{b_l}p_{jl}(x_l)p_{il}(x_l)\int_{a_l'}^{b_l'}e^{-\frac{s^2}{4\delta^2}(x_l-y_l)^2}\d y_l\d x_l\right]\label{eq:form2}
\end{align} 

All the double integrals in (\ref{eq:form1}) and (\ref{eq:form2}) can be consolidated into a unified expression
\begin{align}
  \overline{I}(p,q,\lambda,a,b,a',b')=\int_a^b p(x)\int_{a'}^{b'}e^{-\lambda^2 (x-y)^2}q(y)\d y\d x\label{eq:uni-form}
\end{align}
with $p$ and $q$ be one-dimensional polynomials, e.g. $p(x)=p_{jl}(x)p_{il}(x)$, $q(y)=1$ in (\ref{eq:form2}). 

Next, we explain how to compute these double integrals without using numerical quadrature. Some notations should be introduced at first. Let 
\begin{align*}
  \Phi(a,b,\lambda,k)&=\int_a^b x^ke^{-\lambda^2 x^2}\d x\\
  \overline{\Phi}(a,b,l,\lambda,n)&=\int_a^b x^ne^{-\lambda^2(x-l)^2}\d x\\
  I(p,a,b,l,\lambda)&=\int_a^b p(x)e^{-\lambda^2(x-l)^2}\d x,
\end{align*}
where $p(x)$ is a polynomial.
Noticing that 
\begin{align*}
  \Phi(a,b,\lambda,k)&=-\frac{1}{2\lambda^2}x^{k-1}e^{-\lambda^2x^2}\bigg|_a^b+\frac{k-1}{2\lambda^2}\int_a^b x^{k-2}e^{-s^2 x^2}\d x\\
  &=\frac{1}{2\lambda^2}\left(a^{k-1}e^{-\lambda^2 a^2}-b^{k-1}e^{-\lambda^2 b^2}\right)+\frac{k-1}{2\lambda^2}\Phi(a,b,\lambda,k-2),
\end{align*}
we can compute $\Phi(a,b,\lambda,k)$ recursively with the initial two terms 
\begin{align*}
  \int_a^b e^{-s^2x^2}=\frac{\sqrt{\pi}}{2\lambda}(\mathrm{erf}(\lambda b)-\mathrm{erf}(\lambda a)),\quad \int_a^b xe^{-\lambda^2 x^2}\d x=\frac{1}{2\lambda^2}\left(e^{-\lambda^2 a^2}-e^{-\lambda^2 b^2}\right),
\end{align*}
where 
\begin{align*}
  \mathrm{erf}(x)=\frac{2}{\sqrt{\pi}}\int_0^x e^{-t^2}\d t
\end{align*}
is known as Gauss error function, which has already been implemented in many existing scientific computing libraries. 
Furthermore, since 
\begin{align*}
  \overline{\Phi}(a,b,l,\lambda,n)&=\int_a^b(x-l+l)^ne^{-\lambda^2(x-l)^2}\d x\\
  &=\sum_{k=0}^nC_n^kl^{n-k}\int_a^b(x-l)^ke^{-\lambda^2(x-l)^2}\d x\\
  &=\sum_{k=0}^nC_n^kl^{n-k}\Phi(a-l,b-l,\lambda,k)
\end{align*}
the computation of $\overline{\Phi}(a,b,l,\lambda,n)$ can be gained after the implementation of $\Phi(a,b,\lambda,k)$.
Afterwards, the computation of $I(p,a,b,l,\lambda,n)$ becomes straightforward, since
\begin{align*}
  I(p,a,b,l,\lambda)&=\int_a^b\left(\sum_{n=0}^N c_nx^n\right)e^{-\lambda^2(x-l)^2}\d x=\sum_{n=0}^N c_n \overline{\Phi}(a,b,l,\lambda,n).
\end{align*}

Now we can calculate (\ref{eq:uni-form}) with the three functions above. Let $q'(y)$ represent the derivative of $q(y)$ and $P(x)$ denote the antiderivative of $p(x)$, where the associated constant being of no consequence.
Then, using integration by parts, we can get a recursion formula
\begin{align*}
  &\overline{I}(p,q,\lambda, a,b,a',b')\\
  =&P(x)\int_{a'}^{b'}e^{-\lambda^2 (x-y)^2}q(y)\d y\Bigg|_a^b-\int_a^b P(x)\int_{a'}^{b'}\left(\frac{\d}{\d x}e^{-\lambda^2(x-y)^2}\right)q(y)\d y\d x\\
  =&P(x)\int_{a'}^{b'}e^{-\lambda^2 (x-y)^2}q(y)\d y\Bigg|_a^b+\int_a^b P(x)\int_{a'}^{b'}\left(\frac{\d}{\d y}e^{-\lambda^2(x-y)^2}\right)q(y)\d y\d x\\
  =&P(x)\int_{a'}^{b'}e^{-\lambda^2 (x-y)^2}q(y)\d y\Bigg|_a^b+\int_a^b P(x)\left(e^{-\lambda^2(x-y)^2} q(y)\right)\Bigg|_{a'}^{b'}\d x\\
  &\hspace{2cm}-\int_a^b P(x)\int_{a'}^{b'}e^{-\lambda^2 (x-y)^2}q'(y)\d y\d x\\
  =&P(b)I(q,a',b',b,\lambda)-P(a)I(q,a',b',a,\lambda)+q(b')I(P,a,b,b',\lambda)-q(a')I(P,a,b,a',\lambda)\\
  &\hspace{2cm}-\overline{I}(P,q',\lambda, a,b,a',b').
\end{align*}
With each recursion, the degree of the polynomial 
$q$ is reduced by one through derivation, until 
$q$ becomes zero, at which point the last term vanishes. In other words, we can obtain 
$\overline{I}(p,q,\lambda, a,b,a',b')$ by computing multiple instances of $I(p,a,b,l,\lambda)$, and the method for 
calculating $I(p,a,b,l,\lambda)$ has already been provided.
\subsection{Computation of load vector.}
Following the ideas in computing stiff matrix, we can apply similar method to calculate (\ref{eq:Implementation-2}).

The computation of the first term in (\ref{eq:Implementation-2}) can also be reduced to the integrals in rectangles, that is 
\begin{align*}
  \into\into\bRd f(\yy)\psi_j(\xx)\d\yy\d\xx=\sum_{T\in \mathcal{T}_h}\sum_{T'\in \mathcal{T}_h}\int_{T}\int_{T'}\bRd f(\yy)\psi_j(\xx)\d\yy\d\xx.
\end{align*}
In each rectangle $T'$, the Lagrange basis functions are provided. Thus, we adopt a typical practice of replacing $f$ with its Lagrange interpolation in $T'$, i.e. 
\begin{align}
  \int_{T}\int_{T'}f(\yy)\psi_j(\xx)\d\yy\d\xx=\sum_{i\in i(T')}f_i\int_{T}\int_{T'}\bRd \psi_i(\yy)\psi_j(\xx)\d\yy\d\xx,\label{eq:form3}
\end{align}
where the notation $i(T')$ indicates $f$ is interpolated by the basis functions associated with $T'$.
Since we have already described how to compute (\ref{eq:form1}), the computation of 
(\ref{eq:form3}) naturally follows. 

We next turn to the second terms of (\ref{eq:Implementation-2}) in which boundary integral is involved. Based on the precondition that 
$\Omega$ is decomposed by a mesh of rectangles, the boundary of $\Omega$ is naturally assembled by a set of segments. 
If we denote this set as $\mathcal{L}_h$, the second terms of (\ref{eq:Implementation-2}) can be written as 
\begin{align}
  &\into\intpo \bRd g(\yy)\psi_j(\xx)\d S_\yy\d \xx\notag\\
  =&\sum_{T\in\mathcal{T}_h}\sum_{L'\in \mathcal{L}_h}\int_T\int_{L'}\bRd g(\yy)\psi_j(\xx)\d S_\yy\d\xx\notag\\
  =&\sum_{T\in\mathcal{T}_h}\sum_{L'\in \mathcal{L}_h}\sum_{i\in i(L)}g_i\int_T\int_{L'}\bRd \widetilde{\psi}_i(\yy)\psi_j(\xx)\d S_\yy\d\xx.\label{eq:form4}
\end{align}
In (\ref{eq:form4}), we also utilize the trick of interpolation with the only distinction being that it is one-dimensional here. Meanwhile, we use the notation $\widetilde{\psi}_i$ to indicate this difference.

Subsequently, we illustrate how to calculate (\ref{eq:form4}) by taking a horizontal segment $L'=\left\{(y_1,y_2)\big|a'_1\leq y_1\leq b'_1, y_2=l\right\}$ for example. 
\begin{align*}
  &\int_T\int_{L'}\bRd \widetilde{\psi}_i(\yy)\psi_j(\xx)\d S_\yy\d\xx\\
  =&\frac{1}{s^2}\int_{a_1}^{b_1}\int_{a_2}^{b_2}\int_{a_1'}^{b_1'}e^{-\frac{s^2}{4\delta^2}[(x_1-y_1)^2+(x_2-l)^2]}p_{i1}(y_1)p_{j1}(x_1)p_{j2}(x_2)\d y_1\d x_2\d x_1\\
  =&\frac{1}{s^2}\left[\int_{a_1}^{b_1}p_{j1}(x_1)\int_{a_1'}^{b_1'}e^{-\frac{s^2}{4\delta^2}(x_1-y_1)^2}p_{i1}(y_1)\d y_1\d x_1\right]\left[\int_{a_2}^{b_2}e^{-\frac{s^2}{4\delta^2}(x_2-l)^2}p_{j2}(x_2)\d x_2\right]\\
  =&\frac{1}{s^2}\overline{I}(p_{j1},p_{i1},s/(2\delta), a_1,b_1,a_1',b_1')I(p_{j2},a_2,b_2,l,s/(2\delta)).
\end{align*}
At this point, we have solved the computation of load vector.

With stiff matrix and load vector, the finite element solution $u_h$ can be solved. In addition, we also provide the way to approximate $\nabla u$ based on the solution $u_h$ in Section \ref{sec:Error analysis of finite element method}.
The smoothed term $S_\delta u_h$ and correction term $\mathbf{F}_\delta$ can also be calculated with the same framework. The detail for these two terms is put in Appendix \ref{Appendix:implementation}.  

\section{Numerical Experiments}
\label{sec: Numerical Experiments}
In this section, we will exhibit numerical results to validate the error analysis in Section \ref{sec:Error analysis of finite element method} and demonstrate the performance of the proposed numerical method.
All the experiments are conducted in a Macbook Pro (3.2GHz M1 CPU, 16G memory) with code written in C++.
\subsection{Experiments in a 2D rectangular region}
\label{subsec:Experiments in a 2D rectangular region}
The first example is in $\Omega=[0,1]\times[0,1]$. The solution of the local model is set as 
\begin{equation*}
  u(x_1, x_2) = \cos(\pi x_1)\cos(\pi x_2)+x_1x_2,
\end{equation*}
which implies the right-hand side term 
\begin{align*}
  f(x_1,x_2) = -\Delta u+u= (1+2\pi^2)\cos(\pi x_1)\cos(\pi x_2)+x_1x_2
\end{align*}
and the boundary condition
\begin{equation*}
  g(x_1,x_2) = 
  \left\{
    \begin{aligned}
    \pi\cos(\pi x_1)\sin(\pi x_2)-x_1, \qquad x_1\in[0,1],x_2=0,\\
    -\pi\cos(\pi x_1)\sin(\pi x_2)+x_1,\qquad x_1\in[0,1],x_2=1,\\
    \pi\sin(\pi x_1)\cos(\pi x_2)-x_2, \qquad x_1=0,x_2\in[0,1],\\
    -\pi\sin(\pi x_1)\cos(\pi x_2)+x_2, \qquad x_1=0,x_2\in[0,1].
    \end{aligned}
  \right.
\end{equation*}

\subsubsection{Error between $u$ and $u_h$}

The domain is discretized using a uniform $N \times N$ grid, and tensor-product Lagrange basis functions of order $k = 1$ or $2$ are employed. For convergence in $h$, $\delta$ is fixed at $0.001$ while $N$ is progressively doubled. For convergence in $\delta$, $h$ is fixed at $\frac{1}{128}$ (for $k=1$) or $\frac{1}{64}$ (for $k=2$), and $\delta$ is decreased from $\delta_0 = 0.04$

The $L^2$ and $H^1$ errors with different $h$ are shown in Figure \ref{fig:u-uh-h} when $\delta = 0.001$. For $L^2$ error, the convergence rates align with the bound $O(h^{k+1})$ which is one order higher than the theoretical result. This can be attributed to the absence of Aubin-Nitsche Lemma in the nonlocal context. Similar issue is also reported in 
\cite{du2024error}. When $h\le 1/64$ (first order element) or $h\le 1/16$ (second order element), the $L^2$ error plateaus since $\delta$ dominates.
For $H^1$ error, the observed rates are $O(h^k)$, though the analysis does not guarantee this due to the lack of $H^1$ coercivity in the nonlocal model.

Figure \ref{fig:u-uh-delta} demonstrates the convergence in $\delta$, confirming the first-order dependence on $\delta$ as predicted. As $\delta$ decreasing, the error also plateaus when $h$ dominates. Notably, second order elements exhibit lower plateau since high-order element achieves higher accuracy.



\begin{figure}[h]
  \centering
  \begin{subfigure}[b]{0.48\textwidth} 
      \centering
      \includegraphics[width=\textwidth]{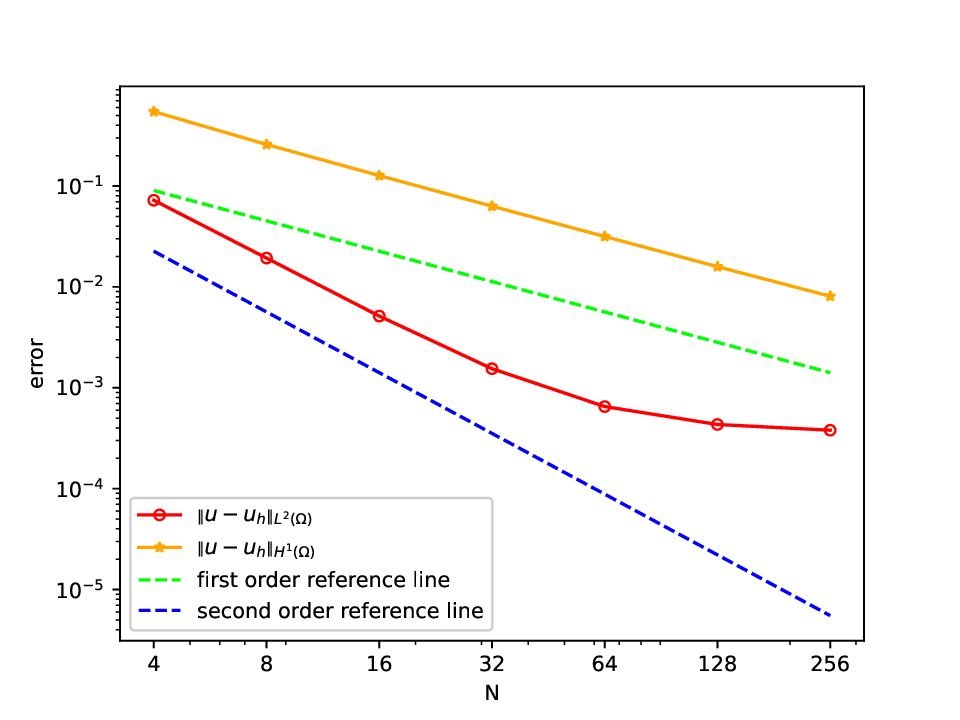} 
      \caption{First order Lagrange interpolation} 
      \label{fig:u-uh-h-left}
  \end{subfigure}
  \hfill 
  \begin{subfigure}[b]{0.48\textwidth} 
      \centering
      \includegraphics[width=\textwidth]{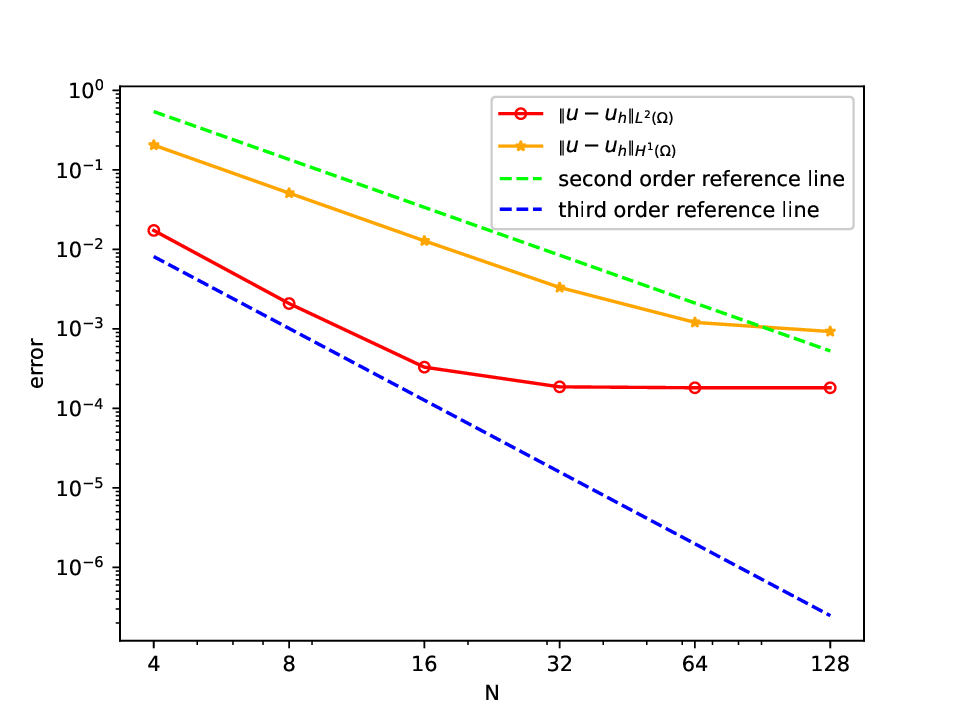} 
      \caption{Second order Lagrange interpolation} 
      \label{fig:u-uh-h-right}
  \end{subfigure}
  \caption{Error between $u$ and $u_h$ with $\delta=0.001$, $h=1/N$.} 
  \label{fig:u-uh-h}
\end{figure}
\begin{figure}[h]
  \centering
  \begin{subfigure}[b]{0.48\textwidth} 
      \centering
      \includegraphics[width=\textwidth]{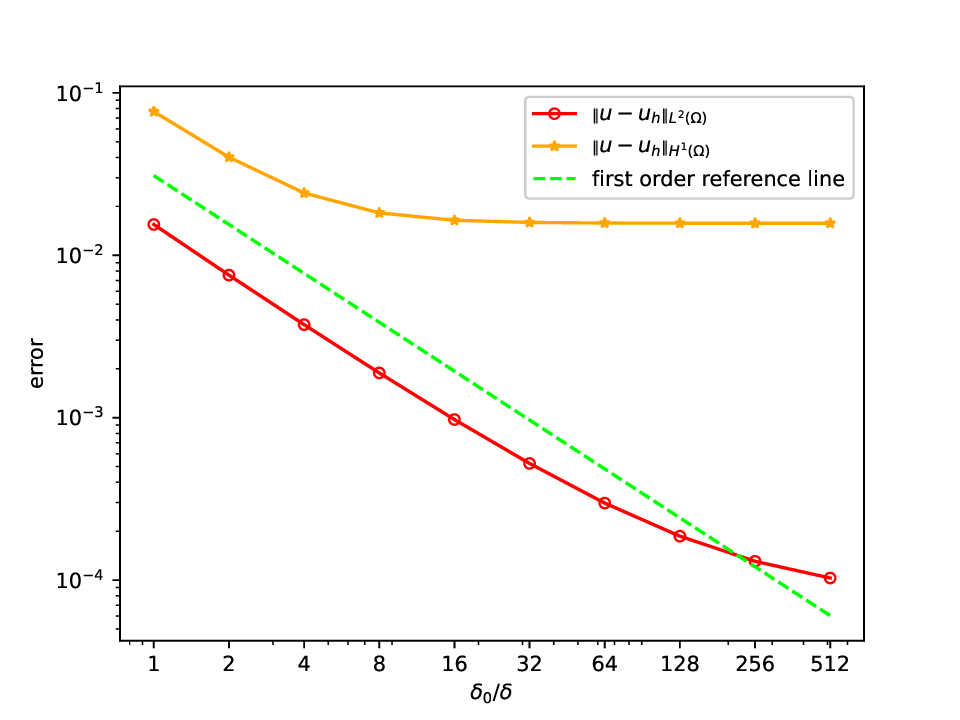} 
      \caption{First order Lagrange interpolation,\\ $N=128$, $\delta_0=0.04$} 
      \label{fig:u-uh-delta-left}
  \end{subfigure}
  \hfill 
  \begin{subfigure}[b]{0.48\textwidth} 
      \centering
      \includegraphics[width=\textwidth]{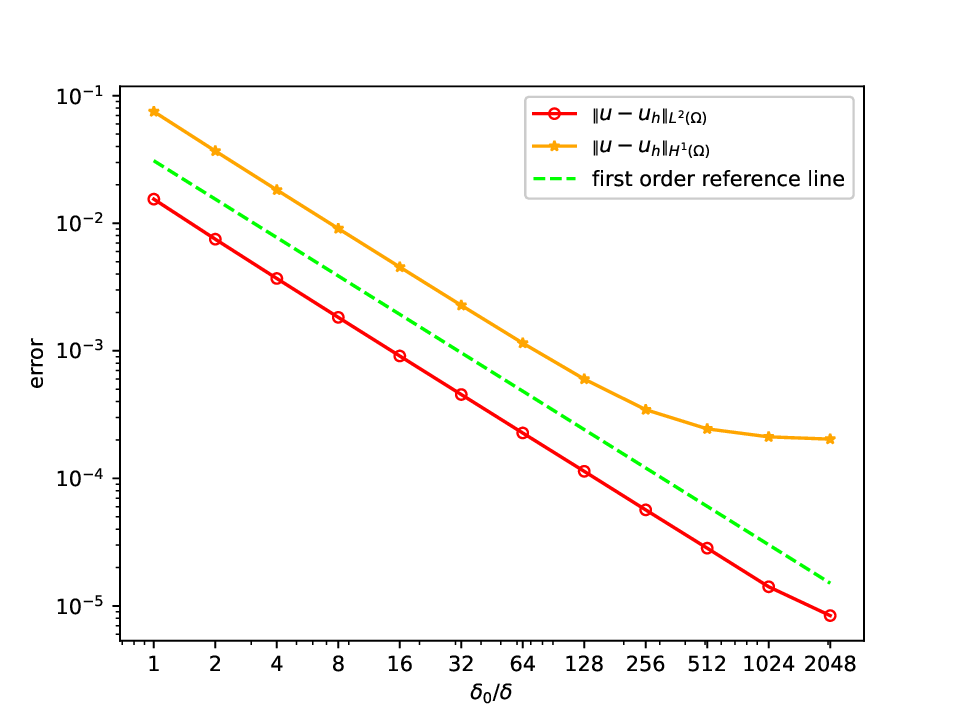} 
      \caption{Second order Lagrange interpolation, $N=64$, $\delta_0=0.04$} 
      \label{fig:u-uh-delta-right}
  \end{subfigure}
  \caption{Error between $u$ and $u_h$ : $h$ is fixed, $\delta$ decreases.} 
  \label{fig:u-uh-delta}
\end{figure}

\subsubsection{Errors in the gradient}
In the theoretical analysis, we propose a gradient recovery technique to approximate the gradient. More precisely, we introduced $S_\delta u_h$ and $\mathbf{F}_\delta$ in Section \ref{sec:Nonlocal diffusion model and conformal finite element discretization} to approximate $\nabla u$. In Table \ref{tab:u-Suh-h} and Table \ref{tab:u-Suh-delta}, we check the convergence of the gradient with respect to $h$ and $\delta$ respectively. As shown in Table \ref{tab:u-Suh-h}, our method gives $k$-th order convergence with respect to $h$. With respect to $\delta$, the convergence rate is first order while the rate is reduced to half order without boundary correction term $\mathbf{F}_\delta$. All the numerical results fit the theoretical analysis very well.

\begin{table}[h]
  \centering
  \begin{tabular}{c|c|c|c|c|c|c|c}
      \hline
      $N$ & 4 & 8 & 16 & 32 & 64 & 128 & 256\\
      \hline
      1st order & 5.42e-1 & 2.58e-1 & 1.27e-1 & 6.31e-2&3.07e-2 &1.30e-2 & 5.54e-3\\
      Rate & - & 1.07 & 1.02 & 1.01 & 1.23 & 1.23 & 1.39 \\
      \hline
      2nd order & 5.10e-2 & 1.28e-2 & 3.46e-3 & 1.55e-3& 1.34e-3& 1.33e-3& 1.33e-3\\
      Rate & - & 1.99  &  1.89 &  1.15 & 0.20 & 0.01  & 0.00 \\
      \hline
  \end{tabular}
  \caption{$\norm{\nabla u-(\nabla S_\delta u_h-\mathbf{F}_\delta)}_{L^2(\Omega)}$ with first and second order Lagrange Elements: $\delta=0.001$, $h=1/N$.} 
  \label{tab:u-Suh-h} 
\end{table}

\begin{table}[h]
  \centering
  \begin{tabular}{c|c|c|c|c|c|c}
      \hline
      $\delta_0/\delta$ &1 &2 & 4 & 8 & 16 & 32 \\
      \hline
      $\norm{\nabla u-\nabla S_\delta u_h}_{L^2(\Omega)}$ & 9.97e-2 & 6.28e-2 & 4.19e-2 & 2.88e-2& 1.99e-2 & 1.49e-2 \\
      Rate & - & 0.67 & 0.58 & 0.54 & 0.53& 0.42 \\
      \hline
      $\norm{\nabla u-\nabla S_\delta u_h}_{L^2(\Omega\backslash\Omega_{2\delta})}$ & 4.13e-2 & 2.30e-2 & 1.23e-2 & 6.24e-3 & 3.19e-3& 1.59e-3\\
      Rate & - & 0.84 & 0.90 & 0.98 & 0.97& 1.01 \\
      \hline
     \footnotesize $\norm{\nabla u-(\nabla S_\delta u_h-\mathbf{F}_\delta)}_{L^2(\Omega)}$ & 5.93e-2 & 2.81e-2 &1.37e-2&6.74e-3 &3.35e-3&1.67e-3\\
      Rate & - & 1.08 & 1.04 & 1.02 & 1.01& 1.01 \\
      \hline
  \end{tabular}
  \caption{Gradient approximation with second order Lagrange Elements: $\delta_0=0.04$, $N=128$.} 
  \label{tab:u-Suh-delta} 
\end{table}

\subsubsection{CPU time of constructing stiff matrix}
Beyond convergence rate validation, we quantitatively analyzed the time consumption of stiffness matrix construction. 


In Table \ref{tab:time-h}, we give the CPU time of assembling the stiffness matrix with different $h$. For uniform rectangular mesh, the stiffness matrix is translation invariant. Using this property, the computational cost can be further reduced as shown in Table \ref{tab:time-h}. If the mesh is non-uniform, the translation-invariant property does not hold. In Table \ref{tab:time-h}, we also list the CPU without using the translation-invariant property. As we can see, the translation-invariant property significantly reduce the computational cost. 
\begin{table}[h]
  \centering
  \begin{tabular}{c|c|c|c|c|c|c}
      \hline
      $N=1/h$ & $4$ & $8$ & 16 & 32 & 64 & 128  \\
      \hline 
      \makecell{1st order} & \makecell{0.004\\ (0.019)} & \makecell{0.009\\ (0.096)} & \makecell{0.025\\ (0.413)} & \makecell{0.078\\ (1.71)} & \makecell{0.695\\ (19.2)} & \makecell{5.306\\ (255)} \\
      \hline
      \makecell{2nd order} &\makecell{0.018\\(0.170)} &\makecell{0.052\\(0.889)} & \makecell{0.136\\(3.898)} & \makecell{0.393\\(16.04)}&  \makecell{3.14\\(184)} & \makecell{21.85\\(2464)} \\
      \hline 
  \end{tabular}
  \caption{CPU time (in seconds) of stiffness matrix assembling with $\delta=0.01$. CPU time without using translation invariance is in the brackets.} 
  \label{tab:time-h} 
\end{table}


\begin{table}[h]
  \centering
  \begin{tabular}{c|c|c|c|c}
      \hline
      $\delta$ & $1/100$ & $1/200$ & $1/400$ & $1/800$  \\
      \hline
      \makecell{1st order} &\makecell{5.31\\(255)} &\makecell{2.54\\(78.9)} & \makecell{0.939\\(28.2)} & \makecell{0.925\\(28.1)}  \\
      \hline
      \makecell{2nd order} &\makecell{21.8\\(2464)}& \makecell{10.4\\(748)} & \makecell{4.15\\(266)} & \makecell{4.15\\(266)} \\
      \hline
  \end{tabular}
  \caption{CPU time (in seconds) of stiff matrix assembling with $h=1/128$. CPU time without using translation invariance is in the brackets.} 
  \label{tab:time-delta} 
\end{table}

The CPU time with different $\delta$ is shown in Table \ref{tab:time-delta}. The computational time increases as $\delta$ grows, since more rectangles involve in the computing of local stiffness matrix.

\subsection{Experiments in a 2D L-shaped Region}
To demonstrate the flexibility of our method for non-rectangular domains, we conduct experiments on an L-shaped region composed of two rectangular subdomains:
\begin{equation}
\Omega = [0,1]\times[0,0.5] \cup [0,0.5]\times[0.5,1].\nonumber
\end{equation}
We consider the exact solution:
\begin{equation}
u(x_1,x_2) = x_1\sin(\pi x_2) + x_2\sin(\pi x_1),\nonumber
\end{equation}
which yields the source term:
\begin{equation}
f(x_1,x_2) = \pi^2(x_1\sin(\pi x_2) + x_2\sin(\pi x_1)) + u(x_1,x_2)\nonumber
\end{equation}
and corresponding Neumann boundary conditions. Figure \ref{fig:Configuration L-shaped Region} illustrates both the domain geometry and solution profile. The domain is discretized using a uniform Cartesian grid with special attention to the reentrant corner. The mesh ensures node alignment at $(0.5,0.5)$ by requiring even divisions in both directions. 

\begin{figure}[h]
  \centering
  \begin{subfigure}[b]{0.48\textwidth} 
      \centering
      \begin{tikzpicture}[scale=3.8,baseline=-1.1 cm] 

    \draw[thick] (0,0) -- (1,0) -- (1,0.5) -- (0.5,0.5) -- (0.5,1) -- (0,1) -- cycle;
    \foreach \x in {0.1,0.2,...,0.4} {
        \draw[] (\x,0) -- (\x,1);
    }
    \foreach \x in {0.5,0.6,...,1.0} {
        \draw[] (\x,0) -- (\x,0.5);
    }
    \foreach \x in {0.5,0.6,...,1.0} {
        \draw[] (0,\x) -- (0.5,\x);
    }
    \foreach \x in {0.1,0.2,...,0.4} {
        \draw[] (0, \x) -- (1, \x);
    }
    \node[below] at (1,0) {$1$};
    \node[left] at (0,1) {$1$};
    \node[left] at (0,0.5) {$0.5$};
    \node[below] at (0.5,0) {$0.5$};
    \node[below left] at (0,0) {$0$};

      \end{tikzpicture}
      \caption{Mesh in L-shaped Region}
      \label{subfig:Mesh in L-shaped Region}
  \end{subfigure}
  \hfill 
  \begin{subfigure}[b]{0.48\textwidth}
      \centering
      \includegraphics[width=\textwidth]{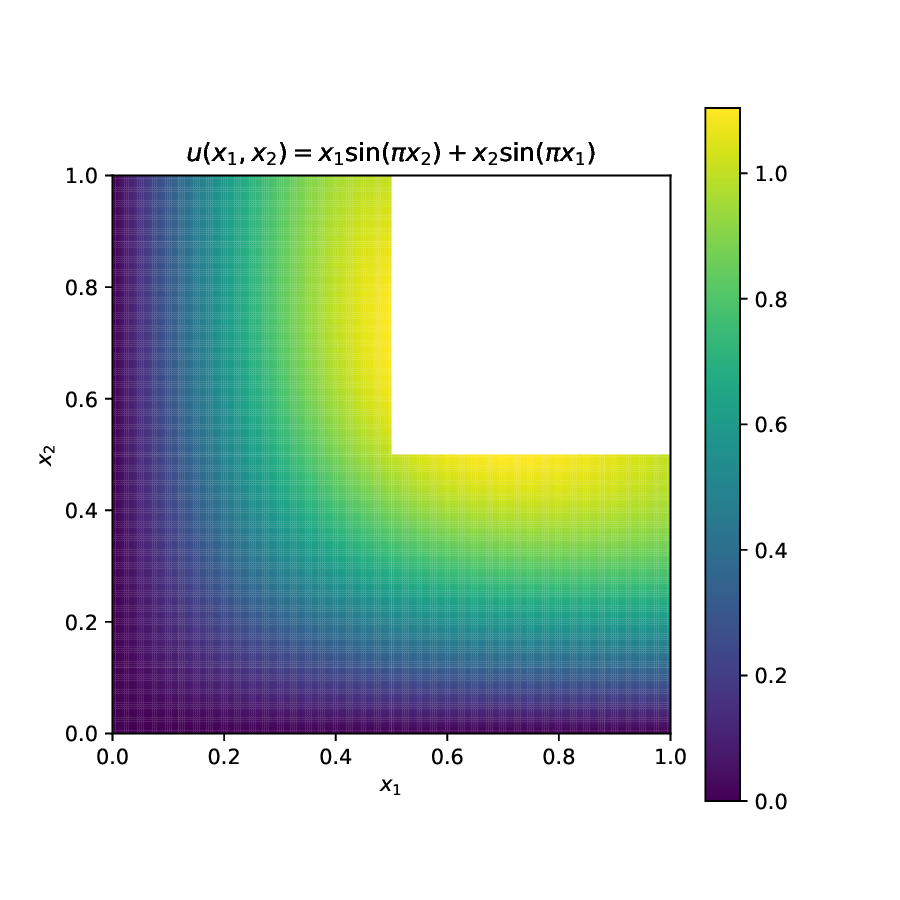} 
      \caption{Exact Solution in L-shaped Region} 
      \label{subfig:exact solution}
  \end{subfigure}
  \caption{Configuration in L-shaped Region} 
  \label{fig:Configuration L-shaped Region}
\end{figure}

We study both the $L^2$ and $H^1$ errors between $u_h$ and $u$. The results are shown in Figure \ref{fig:L-shaped}. For L-shape region, the error has similar behavior as that in 2D box. More precisely, the numerical results verify that $\norm{u-u_h}_{L^2(\Omega)}=O(h^{k+1}+\delta)$ and $\norm{u-u_h}_{H^1(\Omega)}=O(h^k + \delta)$.
\begin{figure}[h]
  \centering
  \begin{subfigure}{0.32\textwidth}
    \centering
    \includegraphics[width=\linewidth]{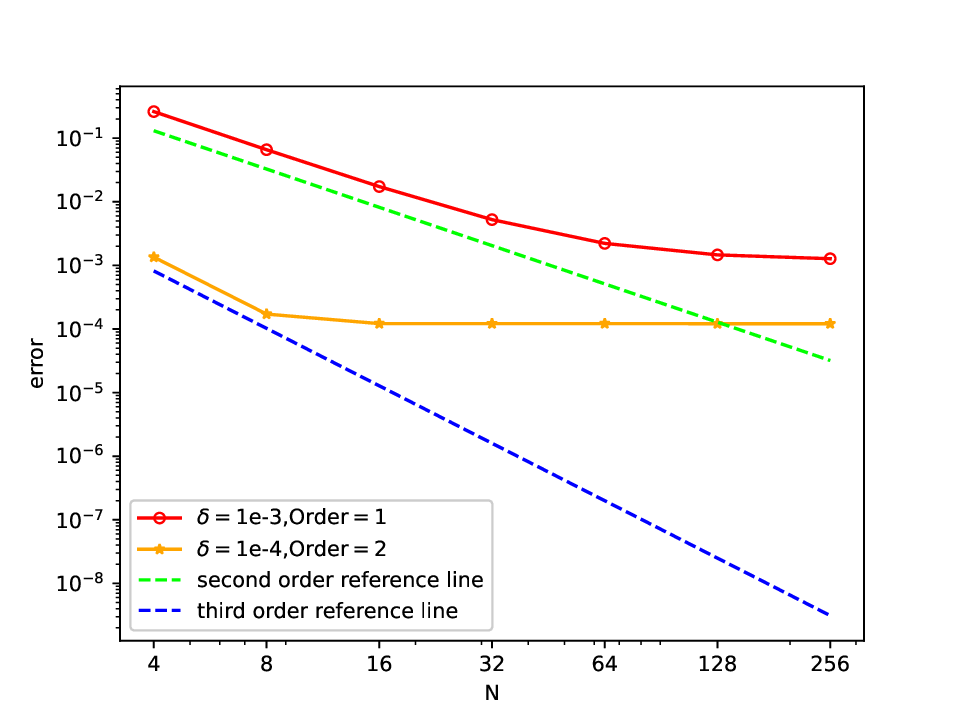}
    \caption{$L^2$ error with $h$ }
    \label{subfig:L-L2}
  \end{subfigure}
  \hfill
  \begin{subfigure}{0.32\textwidth}
    \centering
    \includegraphics[width=\linewidth]{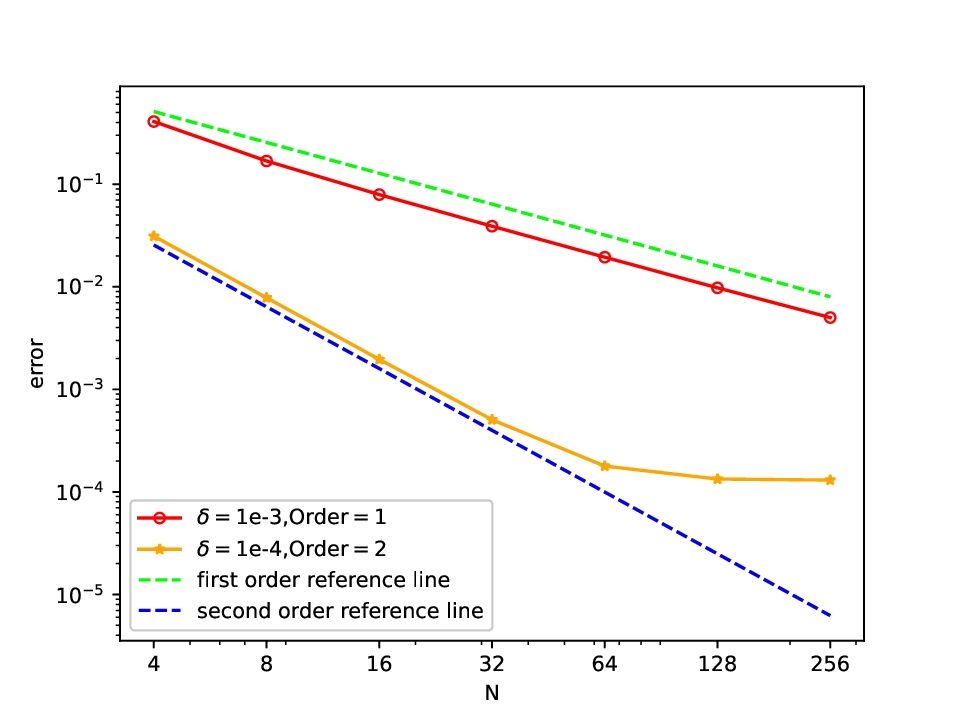}
    \caption{$H^1$ error with $h$ }
    \label{subfig:L-H1}
  \end{subfigure}
  \hfill
  \begin{subfigure}{0.32\textwidth}
    \centering
    \includegraphics[width=\linewidth]{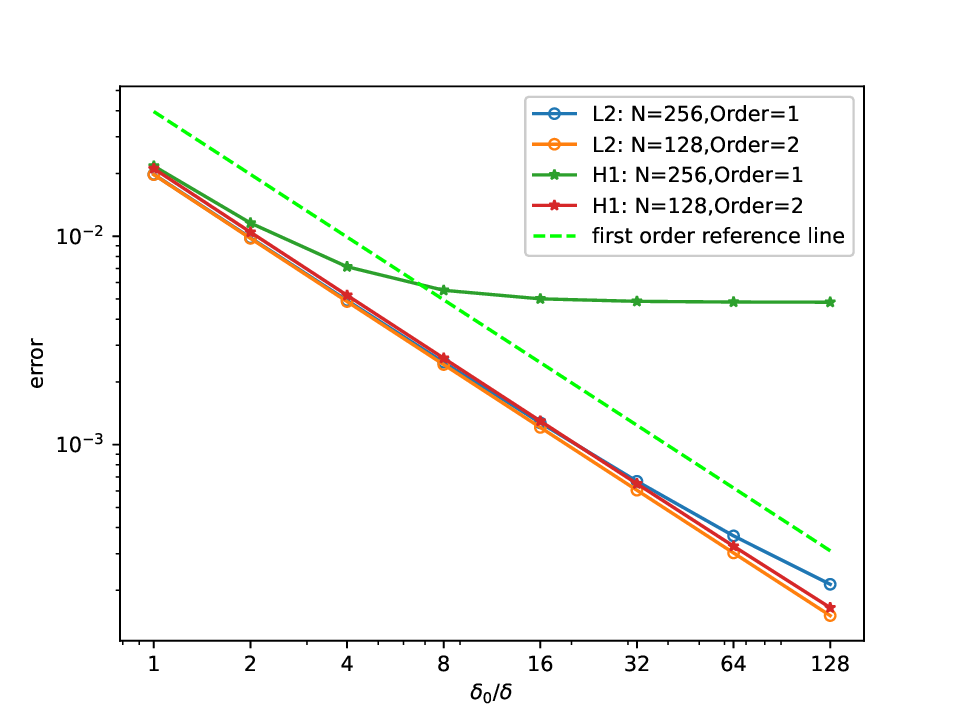}
    \caption{$L^2$ and $H^1$ error with $\delta$}
    \label{subfig:L-delta}
  \end{subfigure}
  \caption{Error in $L$-shaped Region}
  \label{fig:L-shaped}
\end{figure}


\subsection{Experiments in a 3D Cube}

To demonstrate the applicability of our method in higher dimensions, we conduct numerical experiment on a 3D unit cube domain $\Omega = [0,1] \times [0,1] \times [0,1]$. The exact solution is chosen as:
\begin{equation*}
    u(x_1, x_2, x_3) = \cos(\pi x_1)\cos(\pi x_2)\cos(\pi x_3),
\end{equation*}
which yields the source term:
\begin{equation*}
    f(x_1, x_2, x_3) = (1 + 3\pi^2)\cos(\pi x_1)\cos(\pi x_2)\cos(\pi x_3)
\end{equation*}
and homogeneous Neumann boundary conditions $g(x_1, x_2, x_3) = 0$. The cubic domain is discretized using uniform partitions with $N$ subdivisions along each coordinate direction. Our implementation extends naturally from the 2D case.

Figure \ref{fig:3d-h} shows the convergence behavior with fixed $\delta = 10^{-4}$ and increasing mesh resolution. The observed convergence rate is $O(h^{k+1})$ in $L^2$ norm and $O(h^k)$ in $H^1$ norm. 
$\delta$-convergence is studied in Figure \ref{fig:3d-delta}).
The results confirm first-order convergence with respect to $\delta$, with the higher-order method showing more pronounced convergence before reaching the discretization error floor.


\begin{figure}[h]
  \centering
  \begin{subfigure}[b]{0.48\textwidth} 
      \centering
      \includegraphics[width=\textwidth]{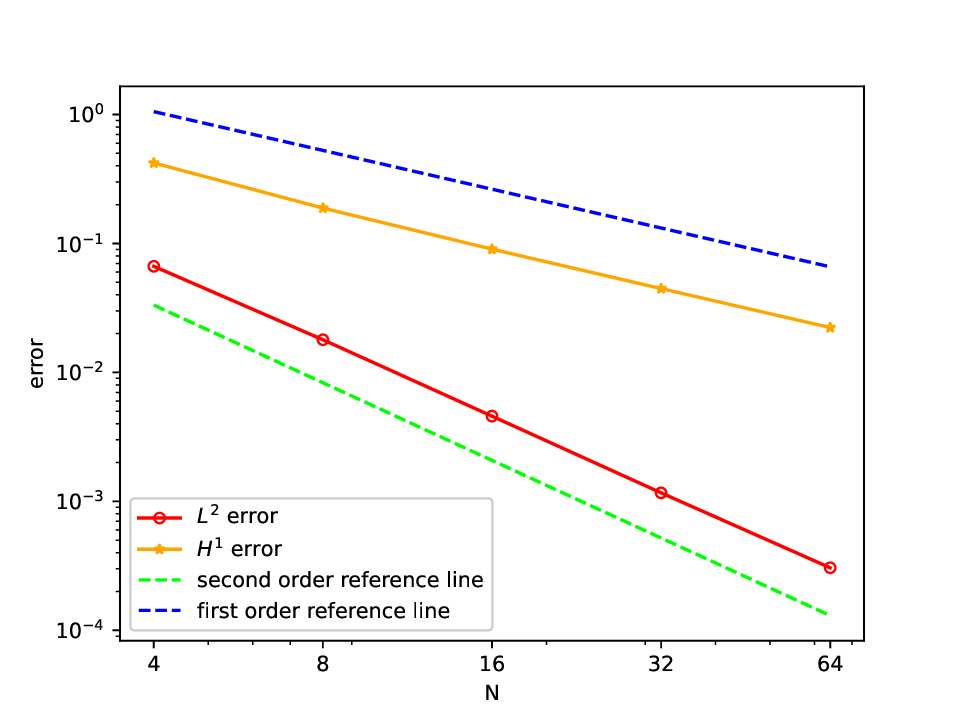} 
      \caption{First order Lagrange element} 
      \label{fig:3d-h-1}
  \end{subfigure}
  \hfill 
  \begin{subfigure}[b]{0.48\textwidth} 
      \centering
      \includegraphics[width=\textwidth]{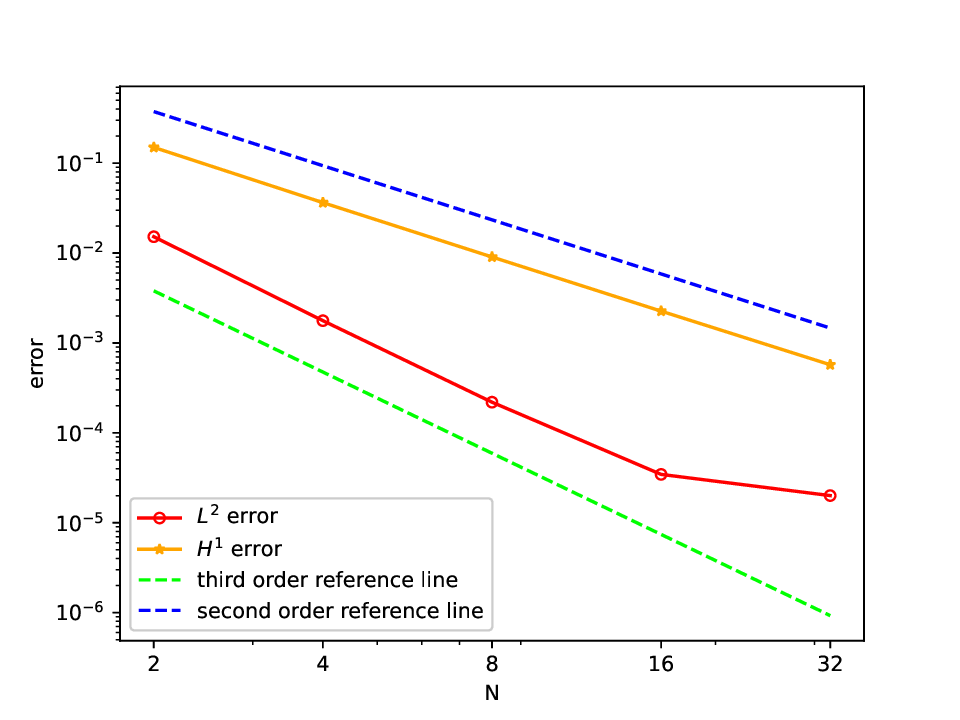} 
      \caption{Second order Lagrange element} 
      \label{fig:3d-h-2}
  \end{subfigure}
  \caption{Error between $u$ and $u_h$ : $\delta=10^{-4}$, $h=1/N$.} 
  \label{fig:3d-h}
\end{figure}
\begin{figure}[h]
  \centering
  \begin{subfigure}[b]{0.48\textwidth} 
      \centering
      \includegraphics[width=\textwidth]{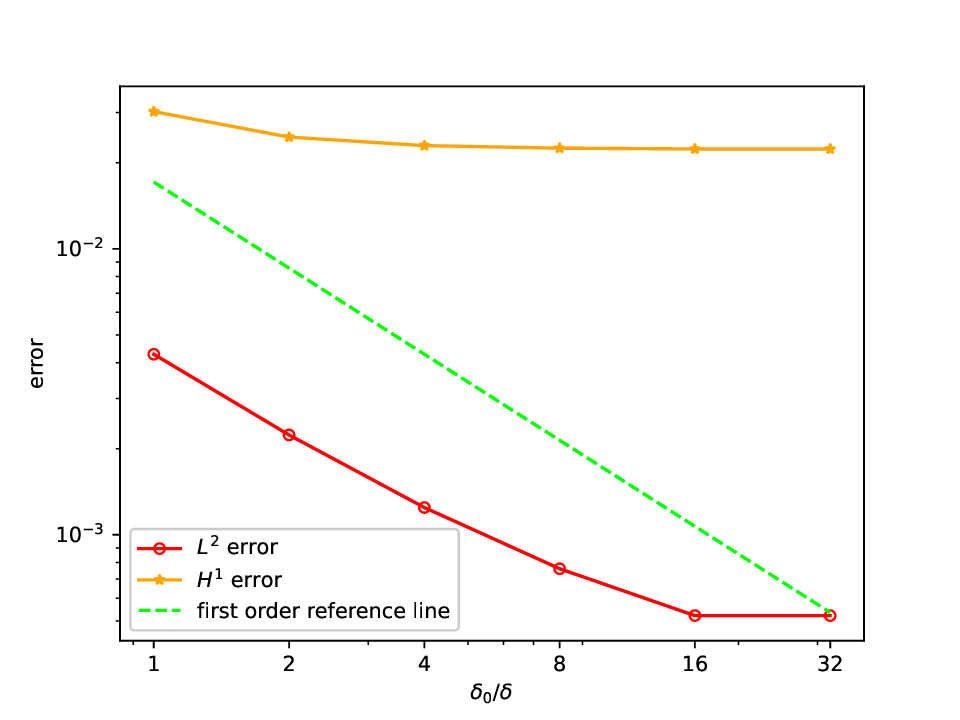} 
      \caption{First order Lagrange element} 
      \label{fig:3d-delta-1}
  \end{subfigure}
  \hfill 
  \begin{subfigure}[b]{0.48\textwidth} 
      \centering
      \includegraphics[width=\textwidth]{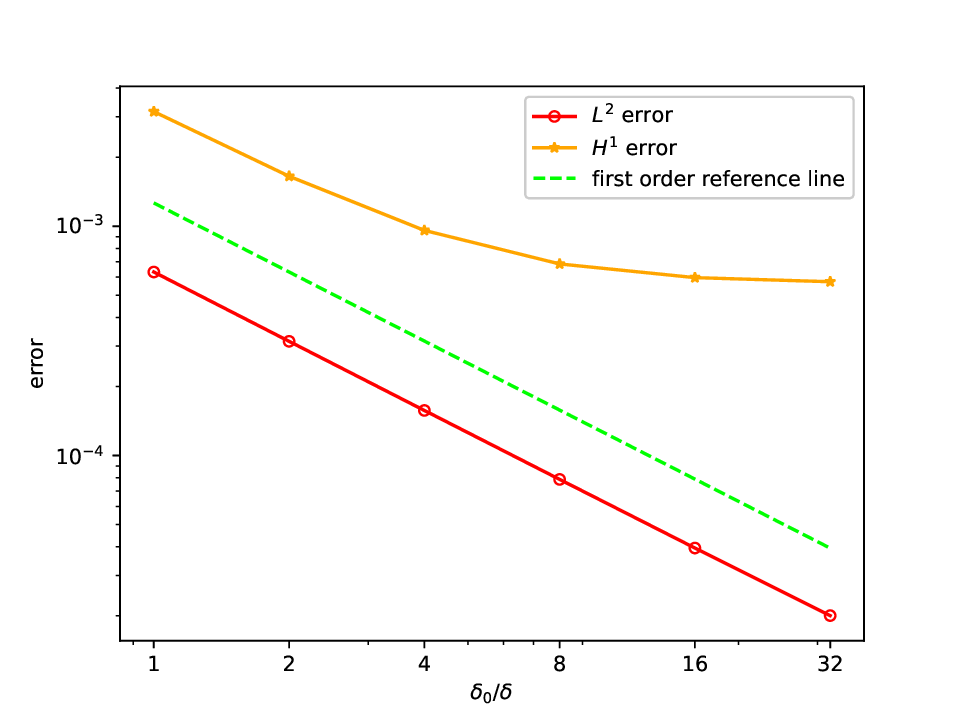} 
      \caption{Second order Lagrange element} 
      \label{fig:3d-delta-2}
  \end{subfigure}
  \caption{Error between $u$ and $u_h$: $h$ is fixed to be $\frac{1}{64}$ and $\frac{1}{32}$, $\delta_0=0.02$ or $0.0016$.} 
  \label{fig:3d-delta}
\end{figure}

\begin{table}[h]
  \centering
  \begin{tabular}{c|c|c|c|c|c}
      \hline
      $N=1/h$ & $4$ & $8$ & $16$ & $32$ & $64$ \\
      \hline 
      \makecell{1st order \\ ($\delta = 0.01$)} & 0.017 & 0.174 & 1.58 & 12.9 & 476 \\
      \hline
      \hline 
      $\delta$ &1/50 & 1/100 & 1/200 & 1/400 & 1/800 \\
      \hline
      \makecell{1st order \\ ($h = \frac{1}{64}$)} & 1246 & 476 & 116 & 116 & 116\\
      \hline 
  \end{tabular}
  \caption{CPU time (in seconds) of constructing stiff matrix with first order element in 3D.} 
  \label{tab:time-order1-3d} 
\end{table}

\begin{table}[h]
  \centering
  \begin{tabular}{c|c|c|c|c|c}
      \hline
      $N=1/h$ & $2$ & $4$ & $8$ & $16$ & $32$ \\
      \hline 
      \makecell{2nd order \\ ($\delta = 0.01$)} & 0.010 & 0.156 & 1.708 & 15.2 & 128\\
      \hline 
      \hline
      $\delta$ &0.02 & 0.01 & 0.005 & 0.0025 & 0.00125 \\
      \hline
      \makecell{2nd order \\ ($h = \frac{1}{32}$)} & 550 & 128 & 128 & 129 & 128\\
      \hline 
  \end{tabular}
  \caption{CPU time (in seconds) of constructing stiff matrix with second order element in 3D.} 
  \label{tab:time-order2-3d} 
\end{table}

We also report the CPU time for constructing stiffness matrix in 3D case, see Table \ref{tab:time-order1-3d} and Table \ref{tab:time-order2-3d}. 
Even for the most expensive case ($N=64,\delta=0.02$), the computation can be done within 1246s in a Macbook laptop which demonstrate the efficiency of the proposed method. 

\section{Conclusion}
\label{sec:conclusion}

This paper has presented a comprehensive framework for finite element approximation of nonlocal diffusion problems, with theoretical analysis and efficient numerical implementation. We proved that the finite element method converges to the correct local limit as both the mesh size $h$ and nonlocal horizon $\delta$ approach zero, without restrictive conditions on their relative scaling. The error analysis establishes $O(h^k + \delta)$ convergence in $L^2$ norm for shape-regular meshes using $k$-th order elements.
For problems requiring gradient approximation, we proposed a post-processing technique combining nonlocal smoothing $S_\delta$ with a boundary correction term $\mathbf{F}_\delta$. This approach is proved to achieve $O(h^k + \delta)$ accuracy for the gradient approximation, overcoming the half-order loss near boundaries.
Moreover, for tensor-product domains with Gaussian kernels, we introduced a novel computational strategy that decomposes the $2n$-dimensional integrals into products of 2D integrals. This approach avoids expensive numerical quadrature while maintaining accuracy. The numerical experiments validate our theoretical results and the efficiency of the proposed algorithm across various geometries, including rectangular domains, L-shaped regions, and three-dimensional cubes. 

In the future, we will try to extend this numerical method to general domain and kernels, not restrictive to the Gaussian kernel and tensor-product domain. We will also explore the application in complex problems include multiscale materials, fracture mechanics etc.

\section*{Appendices}
In the following appendices, we will give the proof of Lemma \ref{lemma:Taylor} and Lemma \ref{lemma:correction}. In the configuration of our nonlocal finite element method, the domain $\Omega$ is set to be a polyhedron. 
Here, for brevity of the proof, we omit some geometric details and restrict our discussion to the case where $\Omega$ is a two-dimensional rectangle.
Moreover, the implementation detail for approximating $\nabla u$ is also provided. 
\appendix
\section{Proof of Lemma \ref{lemma:Taylor}}
\label{Appendix:A}
To prove Lemma \ref{lemma:Taylor}, a technical lemma should be introduced firstly.
\begin{lemma}
  For a polyhedral $\Omega$, let $\Omega_\delta= \left\{\xx\big|d(\xx,\partial\Omega)\leq \delta\right\}$, then for $u\in H^1(\Omega)$, we have
  \begin{align*}
    \norm{u}_{L^2(\Omega_\delta)}^2\leq C\delta\norm{u}_{H^1(\Omega)}^2.
  \end{align*}
\end{lemma}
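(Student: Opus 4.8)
The plan is to reduce this multi-dimensional boundary-layer estimate to a one-dimensional slicing argument, exploiting the appendix convention that $\Omega=[0,a]\times[0,b]$ is a rectangle, whose boundary layer splits cleanly into four axis-aligned strips. First I would observe that any interior point within distance $\delta$ of $\partial\Omega$ is within distance $\delta$ of one of the four sides, so
\[
  \Omega_\delta \subseteq \{y_2\le\delta\}\cup\{y_2\ge b-\delta\}\cup\{y_1\le\delta\}\cup\{y_1\ge a-\delta\},
\]
the four strips overlapping only in corner squares of area $O(\delta^2)$. It therefore suffices to bound $\norm{u}_{L^2}^2$ on a single strip, say $S=\{0\le y_2\le\delta\}$, by $C\delta\norm{u}_{H^1(\Omega)}^2$, and to sum the four contributions; the bounded overcounting near the corners is absorbed into $C$.

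The core ingredient is a one-dimensional estimate that avoids any trace theorem. For $w\in H^1(0,b)$, $0<t<\delta$, and $s\in(0,b)$, the fundamental theorem of calculus gives $w(t)=w(s)-\int_t^s w'(\tau)\,\d\tau$, whence by Cauchy--Schwarz $|w(t)|^2\le 2|w(s)|^2+2b\int_0^b|w'|^2$. Averaging this in $s$ over $(0,b)$ eliminates the pointwise boundary value and yields
\[
  |w(t)|^2\le \frac{2}{b}\norm{w}_{L^2(0,b)}^2+2b\,\norm{w'}_{L^2(0,b)}^2 .
\]
Integrating over $t\in(0,\delta)$ then produces $\int_0^\delta|w(t)|^2\,\d t\le C\delta\,\norm{w}_{H^1(0,b)}^2$, with $C=C(b)$ depending only on the fixed side length.

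I would then apply this with $w(\cdot)=u(y_1,\cdot)$ for a.e.\ fixed $y_1$, and integrate the resulting inequality in $y_1$ over $(0,a)$ via Fubini, obtaining $\norm{u}_{L^2(S)}^2\le C\delta(\norm{u}_{L^2(\Omega)}^2+\norm{\partial_{y_2}u}_{L^2(\Omega)}^2)\le C\delta\,\norm{u}_{H^1(\Omega)}^2$. Repeating for the three remaining strips (slicing in $y_1$ for the two vertical strips) and summing completes the proof, with $C$ depending only on $\Omega$ since $a,b$ are fixed geometric data. The only genuine subtlety --- and the main obstacle in the stated generality of a \emph{polyhedral} $\Omega$ --- is that for a general polyhedron the foliation by lines normal to the boundary degenerates near edges and vertices, so the clean strip decomposition must be replaced by a partition of the boundary layer adapted to each face together with a careful accounting of the corner and edge neighborhoods; for the rectangle treated here this difficulty vanishes, as the axis-aligned strips supply the required foliation directly.
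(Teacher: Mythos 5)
Your proof is correct, and it shares the paper's overall skeleton---both arguments restrict to the rectangle, cover $\Omega_\delta$ by four axis-aligned strips, and reduce the strip estimate to a one-dimensional computation via Fubini---but your one-dimensional step is genuinely different from the paper's. The paper expands $u(x_1,x_2)=u(x_1,0)+\int_0^{x_2}\partial_2 u(x_1,t)\,\d t$, anchoring at the \emph{boundary} point, so after squaring it is left with a term $C\delta\norm{u}_{L^2(\partial\Omega)}^2$, which it then absorbs into $C\delta\norm{u}_{H^1(\Omega)}^2$ by (implicitly) invoking the trace inequality in its final line. You instead anchor at a generic interior point $s$ and average over $s\in(0,b)$, which replaces the boundary value by the slice average $\frac{2}{b}\norm{w}_{L^2(0,b)}^2$, so the trace theorem never enters: only the fundamental theorem of calculus, Cauchy--Schwarz, and Fubini are used, with constants explicit in the side lengths. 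What the paper's route buys is brevity (two lines once the trace inequality is granted); what your route buys is self-containedness and slightly weaker machinery, at the cost of the extra averaging step. Your closing caveat about general polyhedra is also consistent with the paper, which likewise restricts ``for brevity'' to the two-dimensional rectangle and omits the edge/corner bookkeeping a genuine polyhedron would require.
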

The proof of this lemma does not require any special techniques. Let $\Omega = [a_1, b_1]\times [a_2, b_2]$.
Following the notations in Figure \ref{fig:Omegadelta}, $\Omega_\delta\subset\bigcup_{i=1}^4\Omega_\delta^i$.
\begin{figure}[h]
  \centering
  \begin{tikzpicture}[scale=3]
    \draw (0,0) rectangle (1,1);
    \fill[blue!20, opacity=0.5] (0,0) rectangle (1,0.1);
    \fill[blue!20, opacity=0.5] (0,0) rectangle (0.1,1);
    \fill[blue!20, opacity=0.5] (0,0.9) rectangle (1,1);
    \fill[blue!20, opacity=0.5] (0.9,0) rectangle (1,1);
    \node[font=\small] at (0.5,0.05) {$\Omega_\delta^1$};
    \node[font=\small] at (0.05,0.5) {$\Omega_\delta^2$};
    \node[font=\small] at (0.5,0.95) {$\Omega_\delta^3$};
    \node[font=\small] at (0.95,0.5) {$\Omega_\delta^4$};
\end{tikzpicture}
\caption{$\Omega_\delta\subset\bigcup_{i=1}^4\Omega_\delta^i$}
\label{fig:Omegadelta}
\end{figure}
Take the integral in $\Omega_\delta^1$ for example, we have 
\begin{align*}
  \int_{\Omega_\delta^1} u^2(\xx)\d\xx &= \int_{a_1}^{b_1}\int_0^\delta u^2(x_1,x_2)\d x_2\d x_1\\
  &=\int_{a_1}^{b_1}\int_{0}^{\delta} \left(u(x_1,0)+\int_0^{x_2} \partial_2u(x_1,t)\d t\right)^2\d x_2\d x_1\\
  &\leq C\int_{a_1}^{b_1}\int_0^\delta \int_0^{\delta}|\partial_2u(x_1,t)|^2\d t\d x_2\d x_1+C\int_{a_1}^{b_1}\int_0^\delta u^2(x_1,0)\d x_2\d x_2\\
  &\leq C\delta \norm{\partial_2 u}_{L^2(\Omega_\delta)}^2+C\delta \norm{u}_{L^2(\partial\Omega)}^2\\
  &\leq C\delta\norm{u}_{H^1(\Omega)}^2.
\end{align*}
With this lemma, we can prove Lemma \ref{lemma:Taylor}. Since
\begin{align*}
  u(\xx)-u(\yy) = \int_0^1 \frac{\d}{\d s}u(\yy+s(\xx-\yy))\d s= \int_0^1 \nabla u(\yy+s(\xx-\yy))\cdot (\xx-\yy)\d s,
\end{align*}
we have 
\begin{align*}
  \int_{\partial\Omega}\int_\Omega \Rd (u(\xx)-u(\yy))^2\d \xx\d S_\yy
\leq C\delta^2\int_0^1\int_{\partial\Omega}\int_\Omega R_\delta(\xx,\yy)|\nabla u(\yy+s(\xx-\yy))|^2\d\xx\d S_\yy\d s.
\end{align*}
For $s\in (0,1]$,
\begin{align*}
  &\int_{\partial\Omega}\int_\Omega R_\delta(\xx,\yy)|\nabla u(\yy+s(\xx-\yy))|^2\d\xx\d S_\yy\\
\leq&\int_{\partial\Omega}\int_\Omega C_\delta R\left(\frac{|\zz-\yy|^2}{4s^2\delta^2}\right)|\nabla u(\zz)|^2\frac{1}{s^n}\d\zz\d S_\yy\\
=&C\int_{\partial\Omega}\int_{\Omega_{2s\delta}}R_{s\delta}(\zz,\yy)|\nabla u(\zz)|^2\frac{1}{s^n}\d\zz\d S_\yy\\
\leq&\frac{C}{s\delta}\int_{\Omega_{2s\delta}}|\nabla u(\zz)|^2\d \zz\\
\leq& C\norm{u}_{H^2(\Omega)}^2.
\end{align*}
Here we have proved Lemma \ref{lemma:Taylor}.
\section{Proof of Lemma \ref{lemma:correction}}
\label{appendix:lemma-correction}
In this section, we give the proof of Lemma \ref{lemma:correction}. From (\ref{eq:nabla u-Su}), we can get 
\begin{align*}
  &\nabla u(\xx)-\nabla S_\delta u(\xx) + F_\delta(\xx)\\
  =&\frac{1}{w_\delta(\xx)}\int_{\Omega}R_\delta(\xx,\yy)(\nabla u(\xx)-\nabla u(\yy))\d\yy -\frac{1}{w_\delta^2(\xx)}\int_{\partial\Omega}\int_\Omega\Rd R_\delta(\xx,\zz)Tu(\yy,\zz)n(\zz)\d \yy\d S_\zz
\end{align*}
where 
\begin{align*}
  Tu(\yy,\zz)=u(\yy)-u(\zz)-((\yy-\zz)\cdot \nn(\zz))(\nabla u(\zz)\cdot \nn(\zz)).
\end{align*}
The first term above has been estimated in (\ref{eq:nabla-first}). We will focus on the second term in this section.
When $u\in C^3(\bar{\Omega})$, we can divide $Tu(\yy,\zz) = T_1 u(\yy,\zz) + T_2 u(\yy,\zz)$, where 
\begin{align*}
  T_1 u(\yy,\zz) = \nabla u(\zz)\cdot ((\yy-\zz)-((\yy-\zz)\cdot\nn(\zz))\nn(\zz)),
\end{align*}
and
\begin{align*}
  T_2 u(\yy,\zz) = \int_0^1\int_0^1\sum_{i,j}\partial_{ij}u(\zz+st(\yy-\zz))s(y_i-z_i)(y_j-x_j)\d t\d s.
\end{align*}
With the same trick in Appendix \ref{Appendix:A}, we can prove 
\begin{align}
  \norm{\frac{1}{w_\delta^2(\xx)}\int_{\partial\Omega}\int_\Omega\Rd R_\delta(\xx,\zz)T_2u(\yy,\zz)n(\zz)\d \yy\d S_\zz}_{L^2(\Omega)}^2\leq C\delta^3\norm{u}_{H^3(\Omega)}^2.\label{eq:T2}
\end{align}
To estimate the integral about $T_1u(\yy,\zz)$,
we divide $\Omega$ into two parts. In detail, if for the polyhedron $\Omega$, $\partial\Omega=\bigcup_{i=1}^n E_i$, where $E_i$ is the flat face of the boundary of $\Omega$, we can denote
\[D_1=\left\{\xx\in\Omega_{2\delta}, \text{there exists unique }i,B(\xx,2\delta)\cap \partial\Omega \subset E_i\right\},\]
and $D_2=\Omega_\delta\backslash D_1$.

For a fixed $\xx\in D_1$,$\nn(\zz)$ is in fact a constant vector.
\begin{align*}
	&\intpo \into \Rd R_\delta(\xx,\zz)T_1u(\yy,\zz)\d\yy\d S_\zz\\
	=&\intpo \into \Rd R_\delta(\xx,\zz) (\yy-\zz) \cdot (I-\nn(\zz)\nn(\zz)^T)\nabla u(\zz)\d \yy\d S_\zz\\
	=&\intpo R_\delta(\xx,\zz)(\xx-\zz)\cdot\nabla_\Gamma u(\zz) \d S_\zz\into \Rd\d\yy + \intpo R_\delta(\xx,\zz)\nabla_\Gamma u(\zz) \d S_\zz\cdot \into \Rd (\yy-\xx)\d\yy
\end{align*}
For the second term, symmetry implies $\into \Rd (\yy-\xx)\d\yy$ is parallel to $\nn(\zz)$. Meanwhile, $\nabla_\Gamma u(\zz)$ is orthogonal to $\nn(\zz)$.
Hence, the second term is in fact zero. As for the first term, using integration by parts on $\partial\Omega$, we have 
\begin{align*}
	\intpo R_\delta(\xx,\zz)(\xx-\zz)\cdot\nabla_\Gamma u(\zz) \d S_\zz=\delta^2\int_{\partial\Omega}\bar{R}_\delta(\xx,\zz)\Delta_\Gamma u(\zz)\d\zz.
\end{align*}
Now, we have 
\begin{align}
	&\int_{D_1}\left(\intpo \into \Rd R_\delta(\xx,\zz)T_1u(\yy,\zz)\d\yy\d S_\zz\right)^2\d\xx\notag\\
	=&\int_{D_1}w_\delta^2(\xx)\left(\intpo R_\delta(\xx,\zz)(\xx-\zz)\cdot\nabla_\Gamma u(\zz) \d S_\zz\right)^2\d\xx\notag\\
	\leq& C\delta^4\int_{D_1}\left(\int_{\partial\Omega} \bar{R}_\delta(\xx,\zz)\Delta_\Gamma u(\zz)\d S_\zz\right)\d\xx\notag\\
	\leq& C\delta^3\int_{D_1}\int_{\partial\Omega}\bar{R}_\delta(\xx,\zz)(\Delta_\Gamma u(\zz))^2\d S_\zz\d\xx \notag\\
	\leq& C\delta^3\norm{\Delta_\Gamma u}_{L^2(\partial\Omega)}^2\notag\\
	\leq& C\delta^3\norm{u}_{H^3(\Omega)}^2.\label{eq: T1-1}
\end{align}

\begin{figure}
  \centering
  \begin{tikzpicture}[scale=3]
    \draw (0,0) rectangle (1,1);
    
    \fill[blue!20, opacity=0.5] 
        (0,0) rectangle (0.1,0.1)    
        (0.9,0) rectangle (1,0.1)     
        (0,0.9) rectangle (0.1,1)     
        (0.9,0.9) rectangle (1,1);    
    \draw[dashed] (0.1,0) -- (0.1,0.1) -- (0,0.1); 
    \draw[dashed] (0.9,0) -- (0.9,0.1) -- (1,0.1); 
    \draw[dashed] (0.1,1) -- (0.1,0.9) -- (0,0.9); 
    \draw[dashed] (0.9,1) -- (0.9,0.9) -- (1,0.9);
    \draw[gray, thick, dashed] (0.05,0.05) circle (0.12); 
    \draw[dashed, gray] (0.05,0.17) -- (1.9,0.8);
    \draw[dashed, gray] (0.05,-0.07) -- (1.9,0.0);
    \draw[thin] (2,0) -- (2.8,0);
    \draw[thin] (2,0) -- (2,0.8);
    \draw[dashed] (2.0,0.5) -- (2.5,0.5) -- (2.5,0.0); 
    \fill[blue!20, opacity=0.5]
          (2.0,0) rectangle (2.5,0.5);
    \node[font=\small] at (2.25,0.25) {$D_2^1$};
\end{tikzpicture}
\caption{The Region $D_2$ and $D_2^1$}
\label{fig:D2}
\end{figure}
For $\xx\in D_2$, we just need to discuss each subdomain near the corners of $\Omega$. As shown in Figure \ref{fig:D2}, we firstly take the region $D_2^1$ in the lower-left corner as an example to give the following estimation
\begin{align}
  \int_{D_2^1} u^2(\xx)\d\xx \leq C\delta^2\norm{u}_{H^2(\Omega)}^2, \text{ for } u\in H^2(\Omega).\label{eq:smallregion}
\end{align}
The proof of this inequality is based on the Sobolev embedding. For $u\in H^2(\Omega)$ and the dimension of $\Omega$ to be $2$ or $3$, we have 
\begin{align*}
  \norm{u}_{C(\bar{\Omega})}\leq C\norm{u}_{H^2(\Omega)}.
\end{align*} 
With this inequality, we can get 
\begin{align*}
  \int_{D_2^1} u^2(\xx)\d\xx
  &\leq C\int_0^{2\delta}\int_0^{2\delta}\left[\left(\int_0^{x_2}\partial_2 u(x_1,t)\d t\right)^2+\left(\int_0^{x_1}\partial_1 u(s,0)\d s\right)^2\right]\d x_1\d x_2\\
  &\hspace{2cm}+C\int_0^{2\delta}\int_0^{2\delta} u^2(0,0)\d x_1\d x_2\\
  &\leq C\delta \int_0^{2\delta}\int_0^{2\delta}\int_0^{2\delta}\left(|\partial_2 u(x_1,t)|^2 + |\partial_1 u(t,0)|^2\right)\d t\d x_1\d x_2 +C\delta^2 u^2(0,0)\\
  &\leq C\delta^2\norm{u}_{H^2(\Omega)}^2.
\end{align*}
Now we can obtain 
\begin{align*}
	&\int_{D_2^1}\left(\intpo \into \Rd R_\delta(\xx,\zz)T_1u(\yy,\zz)\d\yy\d S_\zz\right)^2\d\xx\\
	\leq&C\delta^2\int_{D_2^1}\left(\intpo R_\delta(\xx,\zz)\nabla u(\zz) \d S_\zz\right)^2\d\xx\\
	\leq& C\delta\int_{D_2^1}\intpo R_\delta(\xx,\zz)|\nabla u(\zz)-\nabla u(\xx)|^2 \d S_\zz\d\xx+C\int_{D_2^1}|\nabla u(\xx)|^2\d\xx\\
  \leq&C\delta^3\norm{u}_{H^3(\Omega)}^2+C\delta^2\norm{u}_{H^3(\Omega)}^2
\end{align*}
In the last inequality, we used Lemma \ref{lemma:Taylor} and (\ref{eq:smallregion}). Here we can conclude 
\begin{align}
  \int_{D_2}\left(\int_{\partial\Omega}\int_\Omega R_\delta(\xx,\yy)R_\delta(\xx,\zz)T_1 u(\yy,\zz)\d\yy\d S_\zz\right)^2\d \xx\leq C\delta^2\norm{u}_{H^3(\Omega)}^2\label{eq:T1-2}
\end{align}
Combining (\ref{eq:nabla-first})(\ref{eq:T2})(\ref{eq: T1-1}) and (\ref{eq:T1-2}), Lemma \ref{lemma:correction} is finally proved.
\section{The implementation for approximating $\nabla u$}
\label{Appendix:implementation}
\subsection{Computation of $S_\delta u_h$.} After solving $u_h$ from the linear system, we can
take $\nabla S_\delta u_h$ as the solution. Thus, we should also provide the method to calculate $\nabla S_\delta u_h$.

In fact, by only a little modification in the methods above can we get the value of $\nabla S_\delta u_h(\xx)$ for $\xx\in\Omega$. 
Recalling the definition in (\ref{eq:Sdelta}), we just need to compute
\begin{align*}
  \into \Rd\d\yy=\sum_{T'\in \mathcal{T}_h}\int_{T'}\Rd\d\yy,\quad \into \nabla_\xx\Rd\d\yy=\sum_{T'\in \mathcal{T}_h}\int_{T'}\nabla_\xx\Rd\d\yy,
\end{align*}
and 
\begin{align*}
  \into \Rd u_h(\yy)\d\yy&=\sum_{T'\in \mathcal{T}_h}\sum_{i\in i(T')}(u_h)_i\int_{T'}\Rd \psi_i(\yy)\d\yy.\\
  \into \nabla_\xx\Rd u_h(\yy)\d\yy&=\sum_{T'\in \mathcal{T}_h}\sum_{i\in i(T')}(u_h)_i\int_{T'}\nabla_\xx\Rd \psi_i(\yy)\d\yy.
\end{align*}
Noticing 
\begin{align*}
  \int_{T'}\Rd\d\yy&=\left[\int_{a_1'}^{b_1'}e^{-\frac{s^2}{4\delta^2}(x_1-y_1)^2}\d y_1\right]\left[\int_{a_2'}^{b_2'}e^{-\frac{s^2}{4\delta^2}(x_2-y_2)^2}\d y_2\right]\\
  &=\bar{\Phi}(a_1',b_1',x_1,s/(2\delta),0)\bar{\Phi}(a_2',b_2',x_2,s/(2\delta),0),
\end{align*}
and 
\begin{align*}
  &\int_{T'} \frac{\partial}{\partial x_1}\Rd \psi_i(\yy)\d\yy\\
  =&\frac{s^2}{2\delta^2}\left[\int_{a_1'}^{b_1'}e^{-\frac{s^2}{4\delta^2}(x_1-y_1)^2} (y_1-x_1)p_{i1}(y_1)\d y_1\right]\left[\int_{a_2'}^{b_2'}e^{-\frac{s^2}{4\delta^2}(x_2-y_2)^2} p_{i2}(y_2)\d y_2\right]\\
  =&I((y-x_1)p_{i1},a_1',b_1',x_1,s/(2\delta))I(p_{i2},a_2',b_2',x_2,s/(2\delta)),
\end{align*}
can be reduced to the familiar integrals, the computation of $\nabla S_\delta u_h$ has been solved since the remaining necessary components can also be handled with the same way. 
\subsection{Computation of correction term}
The correction term is defined in (\ref{eq:correction}). Here, we illustrate the computation of this term. Similar to the calculation above, we write 
\begin{align*}
  \mathbf{F}_\delta(\xx) &= \sum_{T\in \mathcal{T}_h}\sum_{L'\in \mathcal{L}_h}\frac{1}{w^2_\delta(\xx)}\int_{L'}\int_{T}\Rd R_\delta(\xx,\zz)g(\zz)((\yy-\zz)\cdot \nn(\zz))\nn(\zz)\d S_\zz\d \yy\\
  &=\sum_{T\in \mathcal{T}_h}\sum_{L'\in \mathcal{L}_h}\sum_{i\in i(L')}\frac{g_i}{w^2_\delta(\xx)}\int_{L'}\int_{T}\Rd R_\delta(\xx,\zz)\widetilde{\psi}_i(\zz)((\yy-\zz)\cdot \nn(\zz))\nn(\zz)\d S_\zz\d \yy
\end{align*}
We next take $L'=\left\{(z_1,z_2)\big|a_1'\leq z_1 \leq b_1', z_2 = l\right\}$ and $\nn(\zz)=(0,1)^T,\zz\in L'$ for example to explain our method. In this case, we can just consider the second decomposition
\begin{align*}
  &\int_{L'}\int_{T}\Rd R_\delta(\xx,\zz)g(\zz)((\yy-\zz)\cdot \nn(\zz))n_2(\zz)\d \yy\d S_\zz\\
=&\int_{a_1'}^{b_1'}\int_{a_1}^{b_1}\int_{a_2}^{b_2}e^{-\frac{s^2}{4\delta^2}[(x_1-y_1)^2+(x_2-y_2)^2]}e^{-\frac{s^2}{4\delta^2}[(x_1-z_1)^2+(x_2-l)^2]}(y_2-l)p_{i1}(z_1)\d y_1\d y_2\d z_1\\
=&e^{-\frac{s^2}{4\delta^2}(x_2-l)^2}\int_{a_1'}^{b_2'}e^{-\frac{s^2}{4\delta^2}(x_1-z_1)^2}p_1(z_1)\d z_1\int_{a_1}^{b_1}e^{-\frac{s^2}{4\delta^2}(x_1-y_1)^2}\d y_1\int_{a_2}^{b_2}e^{-\frac{s^2}{4\delta^2}(x_2-y_2)^2}(y_2-l)\d y_2\\
=&e^{-\frac{s^2}{4\delta^2}(x_2-l)^2}I(p_1,a_1',b_1',x_1,s/(2\delta))\Phi(a_1-x_1,b_1-x_1,s/(2\delta),0)I(y-l,a_2,b_2,x_2,s/(2\delta)).
\end{align*}
Here, we once again get the required terms with some elementary integrals.

\bibliographystyle{abbrv}

\bibliography{ref}

\end{document}